\newtheorem{thm}{Theorem}[section]
\newtheorem{lemma}[thm]{Lemma}
\newtheorem{prop}[thm]{Proposition}
\newtheorem{coro}[thm]{Corollary}
\theoremstyle{definition}
\newtheorem{defi}[thm]{Definition}
\newtheorem{rem}[thm]{Remark}
\def\Z{\mathds Z}
\def\Q{\mathds Q}
\def\R{\mathds R}
\def\phi{\varphi}
\def\<{{\langle}}
\def\>{{\rangle}}
\newcommand{\lw}[1]{\mathrm{lw}(#1)}
\newcommand{\interior}[1]{\mathrm{int}(#1)}
\newcommand{\Min}[2]{\mathrm{Min}_{#1}(#2)}
\newcommand{\conv}[1]{\mathrm{conv}\left(#1\right)}
\newcommand{\normalfan}[1]{\Sigma_{#1}}
\definecolor{zzttqq}{rgb}{0.6,0.2,0}
\definecolor{ududff}{rgb}{0.3,0.3,1}
\definecolor{aabbcc}{rgb}{1,0,0}
\begin{document}
	
	\title[Lattice width $2$ and corresponding toric hypersurfaces]{Lattice $3$-polytopes of lattice width $2$ and corresponding toric hypersurfaces}
	
	\author[Martin Bohnert]{Martin Bohnert}
	\address{Mathematisches Institut, Universit\"at T\"ubingen,
		Auf der Morgenstelle 10, 72076 T\"ubingen, Germany}
	\email{martin.bohnert@uni-tuebingen.de}
	
	\begin{abstract}
	The Kodaira dimension of a nondegenerate toric hypersurface can be computed from the dimension of the Fine interior of its Newton polytope according to recent work of Victor Batyrev, where the Fine interior of the Newton polytope is the subpolytope consisting of all points which have an integral distance of at least $1$ to all integral supporting hyperplanes. In particular, if we have a Fine interior of codimension $1$, then the hypersurface is of general type and the Newton polytope has lattice width $2$. In this article we study this situation for lattice $3$-polytopes and the corresponding surfaces of general type. In particular, we classify all $2$-dimensional Fine interiors of those lattice $3$-polytopes which have at most $40$ interior lattice points, thus obtaining many examples of surfaces of general type and genus at most 40.
	\end{abstract}
	
	\maketitle
	
	
	\thispagestyle{empty}
	
\section{Introduction}

Let $M\cong \Z^d$ be a lattice of rank $d\in \Z_{\geq 1}$ and $N=\mathrm{Hom}(M\Z)\cong \Z^d$ the dual lattice. We have the natural pairing $\left<\cdot,\cdot\right>\colon M \times N \to \Z$, which extends to the real vector spaces $M_\R= M\otimes \R$ and $N_\R= N\otimes \R$. For a rational $d$-polytope $P$ in  $M_\R$, i.e. $P\subseteq M_\R$ is the convex hull of a finite number of points of $M_\Q=M \otimes \Q$ and $P$ is of full dimension, we have the \textit{Fine interior} of $P$ defined as
\begin{align*}
	F(P) := \{x \in M_\R \mid \left<x,n\right> \geq \Min{P}{n} + 1 \ \forall n \in N\setminus \{0\} \} \subseteq P
\end{align*}
with
\begin{align*}
	\Min{P}{n}:=\min \{\left<x,n\right> \mid x \in P\}.
\end{align*}
The Fine interior is itself a rational polytope, and if $P$ is the Newton polytope of a nondegenerate toric hypersurface $Z$ and has a non-empty Fine interior, then recent work by Victor Batyrev \cite{Bat23} shows that the Fine interior and additional combinatorial data can be used to construct a minimal model $\hat{Z}$ of the hypersurface. Moreover, we can compute some important invariants of the minimal model directly from the Fine interior. The Kodaira dimension is given by \cite[9.2]{Bat23} as
\begin{align*}
	\kappa(\hat{Z})=\min \{\dim F(P), d-1 \}
\end{align*}
and, for example, for the top intersection number in the case $\dim F(P)=d-1$ we have the formula (\cite[5.1]{Gie22}, \cite[9.4]{Bat23})
\begin{align*}
	(K_{\hat{Z}})^{d-1}= 2 \mathrm{Vol}_{d-1}(F(P)),
\end{align*}
where $\mathrm{Vol}_{d-1}$ means that we are looking at the normalized volume in the affine subspace generated by $F(P)$, where normalized means that we have volume $1$ for a simplex whose vertices form an affine lattice basis of the sublattice in the subspace.

We focus in this article on the case $d=3$ with $\dim F(P)=2$, so our nondegenerate toric hypersurface becomes a surface of general type. This implies that our Newton polytope $P\subseteq M_\R$ is a lattice $3$-polytope with lattice width $2$, where we should remember that the \textit{lattice width} $\lw{P}$ is given by
\begin{align*}
	\lw{P} := \min\{ -\Min{P}{-n} - \Min{P}{n} \mid n \in N \setminus \{0  \}.
\end{align*}
and we call $n_{lw}\in N$ a \textit{lattice width direction} if $\lw{P}=-\Min{P}{-n_{lw}} - \Min{P}{n_{lw}}$. 

Why is the combinatorics in this situation easy enough to handle? As a first step, we will see in Theorem \ref{Fine_int_by_middle_polytope} in section $2$, that the Fine interior of a lattice $d$-polytope with lattice width $2$ and lattice width direction $n_{lw}$ is completely determined by its half-integral \textit{middle polytope}, i.e. the rational polytope of dimension $d-1$, which we get by the intersection of $P$ with the hyperplane 
\begin{align*}
	\{x \in M_\R \mid \left<x,n_{lw}\right> = \Min{P}{n_{lw}}+1\}.
\end{align*}
In particular, we can work for lattice $3$-polytopes with a \textit{middlepolygon} and only have to do combinatorics from there on only in dimension $2$. As a second step, we will see in section $3$ that in this case not only the middle polygon but also the Fine interior is a half-integral polygon (Theorem \ref{Fine_int_half_integral}). Moreover, we will see there that the Fine interiors can always be described by the union of a lattice polygon and some very special triangles that we can have on edges of the lattice polygons having two lattice points (see Theorem \ref{hats}). It turns out that this description of the Fine interior is restrictive enough that we end up with an efficient classification algorithm for these Fine interiors in section $4$. We will use this algorithm there to obtain our main classification result in \ref{classification}: There are up to affine unimodular equivalence exactly $24 324 158$ different $2$-dimensional Fine interiors those lattice $3$-polytopes which have at most $40$ interior lattice points. In the last section we use this classification to classify as a corollary the Chern numbers for nondegenerated toric hypersurfaces of genus at most $40$ which have as Newton polytope a lattice $3$-polytope with $2$-dimensional Fine interior. Having the Chern numbers of the surfaces, we can arrange them in the geography of the surfaces of general type.
	
\section{The Fine interior of lattice polytopes of lattice width 2}

In this section we will see that the Fine interior of a lattice $d$-polytope of lattice width $2$ for arbitrary dimension $d$ is determined by its half-integral middle polytope, which allows us to do our Fine interior computations in codimension $1$, or in other words, without loss of generality, we can restrict ourselves for Fine interior computations to the case of pyramids of heigth $2$ which have the same middle polytope. 

We begin with the following lemma, which shows why the case of lattice width $2$ is important for Fine interior computations. 

\begin{lemma}
Let $P\subseteq M_\R$ be a rational $d$-polytope. If $\lw{P}<2$, then $F(P)=\emptyset$. If $\lw{P}=2$, then $\dim F(P)<\dim P$, and if $\dim F(P)=\dim(P)-1$, then $\lw{P}=2$.
\end{lemma}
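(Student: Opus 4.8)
The plan is to derive all three statements from one elementary device: for a primitive $n\in N$ and any $x\in F(P)$, the two Fine-interior inequalities attached to $n$ and to $-n$ say $\langle x,n\rangle\ge\Min{P}{n}+1$ and $\langle x,n\rangle\le-\Min{P}{-n}-1$, so
\begin{align*}
\Min{P}{n}+1\le\langle x,n\rangle\le-\Min{P}{-n}-1 .
\end{align*}
For the first assertion I would pick a primitive $n$ witnessing $\lw{P}<2$; then the chain above is inconsistent, so $F(P)=\emptyset$. For the second, if $\lw{P}=2$ I would take a lattice width direction $n_{lw}$; then $-\Min{P}{-n_{lw}}-1=\Min{P}{n_{lw}}+1$, so the chain collapses to $\langle x,n_{lw}\rangle=\Min{P}{n_{lw}}+1$ for every $x\in F(P)$, which forces $F(P)$ into a hyperplane and hence $\dim F(P)\le d-1<\dim P$.

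The real content is the converse, so assume $\dim F(P)=d-1$. Then $F(P)\ne\emptyset$, whence $\lw{P}\ge 2$ by the contrapositive of the first assertion, and it remains to exhibit a direction in which $P$ has width exactly $2$. The affine hull of $F(P)$ is a rational hyperplane $H=\{x\in M_\R\mid\langle x,n_0\rangle=c\}$ with $n_0\in N$ primitive and $c\in\Q$. I would fix a point $x_0$ in the relative interior of $F(P)$ and consider the set $S_0$ of those $n\in N\setminus\{0\}$ whose Fine-interior inequality is tight at $x_0$, and then check: (i) $S_0\ne\emptyset$, for otherwise all (finitely many) defining inequalities of $F(P)$ are strict at $x_0$, so a neighbourhood of $x_0$ in $M_\R$ lies in $F(P)$, contradicting $\dim F(P)=d-1$; (ii) $S_0\subseteq\{n_0,-n_0\}$, since a tight inequality for an $n$ not parallel to $n_0$ would be violated after pushing $x_0$ slightly along a suitable direction parallel to $H$ --- which keeps $x_0$ inside $F(P)$ because $x_0$ is a relative interior point --- while a proper lattice multiple of $n_0$ cannot be tight, its defect at $x_0$ being at least $2$; and (iii) in fact $\{n_0,-n_0\}\subseteq S_0$, for if, say, the $n_0$-inequality were strict at $x_0$ then $S_0=\{-n_0\}$ by (i) and (ii), and moving $x_0$ a little in the direction $-n_0$ keeps every defining inequality satisfied and produces a point of $F(P)$ outside $H$, contradicting $H=\mathrm{aff}(F(P))$. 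With (iii) in hand, $\langle x_0,n_0\rangle=\Min{P}{n_0}+1$ and $-\langle x_0,n_0\rangle=\Min{P}{-n_0}+1$; adding these gives $-\Min{P}{-n_0}-\Min{P}{n_0}=2$, so $P$ has width $2$ in the direction $n_0$ and $\lw{P}=2$.

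The main obstacle is the combinatorial bookkeeping in steps (ii) and (iii): identifying exactly which Fine-interior inequalities can be active at a relative interior point of $F(P)$, and checking that if either of $\pm n_0$ failed to be active one could move within $F(P)$ off its affine hull. The only technical ingredient behind the perturbation arguments is that $F(P)$ is defined by finitely many of the inequalities $\langle x,n\rangle\ge\Min{P}{n}+1$ (equivalently, $\langle x,n\rangle-\Min{P}{n}\to\infty$ as $\|n\|\to\infty$, uniformly on compact subsets of $\interior{P}$), so that a small perturbation of $x_0$ affects only finitely many of them; this is already contained in the assertion, recalled above, that $F(P)$ is a rational polytope.
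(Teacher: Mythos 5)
Your proposal is correct and follows essentially the same route as the paper: the first two assertions use exactly the paired inequalities for $\pm n_{lw}$, and for the converse you prove carefully (via the tightness/perturbation argument at a relative interior point) the very fact the paper simply asserts, namely that both a primitive normal $n_0$ of $\mathrm{aff}(F(P))$ and $-n_0$ must give tight Fine-interior inequalities, whence $-\Min{P}{-n_0}-\Min{P}{n_0}=2$. Your extra care with finiteness of the relevant inequalities and with excluding non-primitive multiples of $n_0$ only fills in details the paper leaves implicit.
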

\begin{proof}
	If $n_{lw}\in N$ is a lattice width direction, then we have
	\begin{align*}
		F(P) \subseteq \{x\in M_\R \mid \left<x,n_{lw}\right> \geq \Min{P}{n_{lw}}+1, \left<x,-n_{lw}\right> \geq \Min{P}{-n_{lw}}+1 \}
	\end{align*}
	and the set on the right side is empty for $\lw{P}<2$ and at of dimension $d-1$ for $\lw{P}=2$. If we have $\dim F(P)=\dim(P)-1$, then we must have a normal vector $n\in N$ for $F(P)$ such that $-n$ is also a normal vector and thus $-\Min{P}{-n}-\Min{P}{n}=2$. This implies $\lw{P}\leq 2$ and we have $\lw{P}\geq 2$ since this is always the case for non-empty Fine interior by the first part of the lemma.
\end{proof}

\begin{rem}
There are many examples of lattice polytopes $P\subseteq M_\R$ with lattice width greater than $2$ and $\dim F(P) < \dim P-1$. Since $\dim F((d+1)\Delta_d)=0$ for the standard simplex $\Delta_d$, i.e. the convex hull of an affine lattice basis, and $\lw{(d+1)\Delta_d}=d+1$, we have for $d, k, w\in \Z, d\geq 2, 0\leq k<d-1, 3\leq w\leq d-k+1$ the lattice $d$-polytope $(d-k+1)\Delta_{d-k} \times [0,w]^{k}$ with lattice width $w>2$ and $\dim(F(P))=k$.
\end{rem}

\begin{rem}
We can see that $\dim(F(P))=d-1$ implies $\lw{P}\leq 2$ also from the general theory about lattice projections along the Fine interior. By \cite[9.1]{Bat23} we have a lattice projection along the Fine interior on a lattice polytope with a Fine interior of dimension $0$. So from a Fine interior of dimension $d-1$ we get a projection onto a lattice polytope of dimension $1$ with a Fine interior of dimension $0$. But up to lattice translation there is only $[-1,1]$ with this property, and a lattice projection onto $[-1,1]$ is obviously equivalent to $\lw{P}\leq 2$.
\end{rem}

We now rearrange our coordinates for a good working setup with the middle polytope and split the dual lattice $N$ with respect to the middle polytope. 

\begin{defi}
Let $P\subseteq \R\times M_\R$ be a lattice polytope with $\lw{P}=2$ and $P \subseteq [-1,1] \times M_\R$. Then we have the half-integral middle polytope $P_0\subseteq M_\R$ of $P$ defined by 
\begin{align*}
\{0\} \times P_0 = P \cap (\{0\} \times M_\R).
\end{align*}
Using $P_0$, we define a partition $N\setminus \{0\}=N_1(P_0)\cup N_2(P_0)$ of the non-zero dual lattice vectors by 
\begin{align*}
N_1(P_0) := \{n \in N\setminus \{0\} \mid \Min{P_0}{n} \in \Z\}, N_2(P_0) := \{n \in N\setminus \{0\} \mid \Min{P_0}{n} \notin \Z\}.
\end{align*}
\end{defi}

Now we are ready to show how the middle polytope determines the Fine interior of a lattice polytope of lattice width $2$.

\begin{thm}\label{Fine_int_by_middle_polytope}
Let $P\subseteq \R \times M_\R$ be a lattice polytope of lattice width $2$, $P \subseteq [-1,1] \times M_\R $ and $P_0\subseteq M_\R$ the middle polytope of $P$. Then the Fine interior $F(P)$ of $P$ is
\begin{align*}
F(P) = F_1(P)\cap F_2(P)
\end{align*}
with
\begin{align*}
F_1(P):=&\{0\} \times \{x \in M_\R \mid \left<x,n\right> \geq \Min{P_0}{n}+1 \ \forall n \in N_1(P_0)\} \\
F_2(P):=&\{0\} \times \{x \in M_\R \mid \left<x,2n\right> \geq \Min{P_0}{2n}+1 \ \forall n \in N_2(P_0)\} .
\end{align*}
In particular, $F(P)$ is completely determined by $P_0$.
\end{thm}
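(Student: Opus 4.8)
The plan is to pass to the ambient dual lattice $\Z\times N$ of $\R\times M_\R$ and, for each fixed $n\in N\setminus\{0\}$, to isolate the strongest among the defining inequalities of $F(P)$ coming from the dual vectors $(a,n)$, $a\in\Z$; comparing this with the inequalities defining $F_1(P)$ and $F_2(P)$ then yields the claim. As a preliminary reduction, $P\subseteq[-1,1]\times M_\R$ makes the width of $P$ in direction $(1,0)$ at most $2$, hence exactly $2$ since $\lw{P}=2$, so $\Min{P}{(1,0)}=\Min{P}{(-1,0)}=-1$; the inequalities for $(1,0)$ and $(-1,0)$ then confine $F(P)$ to $\{0\}\times M_\R$, and the $(a,0)$ with $|a|\geq 2$ impose nothing further there. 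In particular $P$ meets both hyperplanes $\{t=\pm1\}$, so the slices $P_{-1},P_0,P_1\subseteq M_\R$ (with $\{t\}\times P_t=P\cap(\{t\}\times M_\R)$) are all non-empty.

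For fixed $n\in N\setminus\{0\}$ I would compute $\Min{P}{(a,n)}$ through the projection $\pi_n\colon\R\times M_\R\to\R^2$, $(t,x)\mapsto(t,\langle x,n\rangle)$, under which $\Min{P}{(a,n)}$ is the minimum of $(t,s)\mapsto at+s$ over $\pi_n(P)$. The crucial input is that $P$ is a \emph{lattice} polytope in $[-1,1]\times M_\R$, so all vertices of $P$ --- and hence of $\pi_n(P)=\conv{\pi_n(\operatorname{vert} P)}$ --- have first coordinate in $\{-1,0,1\}$. Therefore the lower boundary of $\pi_n(P)$ is the graph of a convex piecewise-linear function $\phi$ on $[-1,1]$ that is affine on $[-1,0]$ and on $[0,1]$, with $\phi(-1)=\Min{P_{-1}}{n}=:\alpha\in\Z$, $\phi(1)=\Min{P_1}{n}=:\gamma\in\Z$ and $\phi(0)=\Min{P_0}{n}=:m$; such a $\phi$ attains its minimum at $-1$, $0$ or $1$, so $\Min{P}{(a,n)}=\min\{\alpha-a,\,m,\,\gamma+a\}$. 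Note that convexity of $\phi$ gives $m\leq\tfrac12(\alpha+\gamma)$, and that $\Min{P}{(a,n)}\in\Z$ for all $a\in\Z$ because $P$ is a lattice polytope.

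Now I would maximise over $a\in\Z$. Since $a\mapsto\min\{\alpha-a,\gamma+a\}$ is unimodal with maximum $\floor{\tfrac12(\alpha+\gamma)}$ and attains every smaller integer, $\max_{a\in\Z}\Min{P}{(a,n)}=\min\{\floor{\tfrac12(\alpha+\gamma)},\,m\}$. If $m\in\Z$ this equals $m=\Min{P_0}{n}$ (since $m\leq\floor{\tfrac12(\alpha+\gamma)}$). If $m\notin\Z$, then the corner of $\phi$ at $0$ cannot be an actual vertex of $\pi_n(P)$ --- such a vertex would be the image of a lattice vertex of $P$ at height $0$ and would force $m\in\Z$ --- so $\phi$ is affine across $0$, $m=\tfrac12(\alpha+\gamma)$, $\alpha+\gamma$ is odd, and the maximum is $\floor{\tfrac12(\alpha+\gamma)}=m-\tfrac12=\Min{P_0}{n}-\tfrac12$. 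In both cases $\max_{a\in\Z}\Min{P}{(a,n)}=\floor{\Min{P_0}{n}}$, so $F(P)=\{0\}\times\{x\in M_\R\mid\langle x,n\rangle\geq\floor{\Min{P_0}{n}}+1\ \text{for all}\ n\in N\setminus\{0\}\}$.

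It remains to split this family of inequalities. For $n\in N_1(P_0)$ it is exactly $\langle x,n\rangle\geq\Min{P_0}{n}+1$, i.e.\ the defining inequality of $F_1(P)$. For $n\in N_2(P_0)$ one has $\Min{P_0}{n}\in\tfrac12+\Z$, so $\floor{\Min{P_0}{n}}+1=\Min{P_0}{n}+\tfrac12$ and the inequality rewrites (multiplying by $2$) as $\langle x,2n\rangle\geq 2\Min{P_0}{n}+1=\Min{P_0}{2n}+1$, i.e.\ the defining inequality of $F_2(P)$. Hence $F(P)=F_1(P)\cap F_2(P)$, which depends only on $P_0$. The step I expect to be the main obstacle is exactly the case $m=\Min{P_0}{n}\notin\Z$: one must be sure that such a value is a genuine half-integer equal to $\tfrac12(\Min{P_{-1}}{n}+\Min{P_1}{n})$, and the cleanest way to get this is the projection argument above (equivalently, the identity $P_0=\conv{Q_0\cup\tfrac12(P_{-1}+P_1)}$, where $Q_0$ is the convex hull of the lattice points of $P$ at height $0$); the remaining manipulations with the tent function $\min\{\alpha-a,m,\gamma+a\}$ are routine.
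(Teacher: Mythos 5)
Your proof is correct, and while it shares the paper's overall frame --- restrict $F(P)$ to the slice $\{0\}\times M_\R$ using $\Min{P}{(1,0)}=\Min{P}{(-1,0)}=-1$, then for each fixed $n\in N\setminus\{0\}$ optimize over the integer first component of the dual vector --- the key step is carried out by a genuinely different mechanism. The paper first discards all pairs $(n_0,n)$ whose gap $\Min{\{0\}\times P_0}{(n_0,n)}-\Min{P}{(n_0,n)}$ exceeds $\tfrac{1}{2}$ (using the half-integrality of $P_0$ as known input), sorts the remaining pairs into two classes $\tilde{N_1},\tilde{N_2}$, and then for each $n$ exhibits an optimal $n_0$ by choosing a vertex $e$ of $P_0$ attaining $\Min{P_0}{n}$ and a vertex $(1,f')$ of $P$ maximizing $\left<e-f',n\right>$ (shifted by $\tfrac{1}{2}$ in the non-integral case), verifying that the resulting hyperplane supports $P$. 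You instead compute the exact value $\max_{a\in\Z}\Min{P}{(a,n)}=\floor{\Min{P_0}{n}}$ by projecting $P$ to $\R^2$ via $(t,x)\mapsto(t,\left<x,n\right>)$ and exploiting that all vertices of the image have first coordinate in $\{-1,0,1\}$, so that $\Min{P}{(a,n)}$ is the tent function $\min\{\alpha-a,\,m,\,\gamma+a\}$ with $\alpha,\gamma\in\Z$; this yields the closed formula $F(P)=\{0\}\times\{x\in M_\R \mid \left<x,n\right>\geq\floor{\Min{P_0}{n}}+1 \ \forall n\in N\setminus\{0\}\}$, makes the discarding step unnecessary, and recovers the half-integrality of the relevant support values $\Min{P_0}{n}$ as a byproduct rather than an assumption. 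The one point where a reader must pause --- that a kink of the lower envelope at $t=0$ would force a vertex of the projected polygon there, hence a lattice vertex of $P$ at height $0$, so $m\notin\Z$ implies $\phi$ is affine and $m=\tfrac{1}{2}(\alpha+\gamma)$ --- is argued correctly in your write-up, so the proof is complete; its payoff is a slightly more quantitative and self-contained version of the paper's vertex-selection argument.
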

\begin{proof}
	Since $P\subseteq [-1,1] \times M_\R$ and $\lw{P}=2$, we have $F(P)=F(P)\cap (\{0\} \times M_\R)$ and 
	\begin{align*}
	F(P)=\{0\} \times \{x \in M_\R \mid \left<(0,x),(n_0,n)\right> \geq \Min{P}{(n_0,n)}+1 \ \forall n_0 \in \Z, n \in N\setminus \{0\}\}.
	\end{align*}
	So we have $F(P)\subseteq \{0\} \times P_0$, and since $P$ is a lattice polytope and $P_0$ is half-integral, it is sufficient enough to look at those pairs $(n_0,n)$ with
	\begin{align*}
		\Min{P}{(n_0,n)}+\frac{1}{2}\geq \Min{\{0\} \times P_0}{(n_0,n)}.
	\end{align*}
	With
	\begin{align*}
	\tilde{N_1} :=& \{(n_0,n) \in \Z \times (N\setminus \{0\}) \mid \Min{P}{(n_0,n)}= \Min{\{0\} \times P_0}{(n_0,n)}\}, \\\tilde{N_2} :=& \{(n_0,n) \in \Z \times (N\setminus \{0\}) \mid \Min{P}{(n_0,n)}+\frac{1}{2}= \Min{\{0\} \times P_0}{(n_0,n)}\}
	\end{align*}
	we get
	\begin{align*}
	F(P)=& (\{0\} \times \{x \in M_\R \mid \left<(0,x),(n_0,n)\right> \geq \Min{\{0\} \times P_0}{(n_0,n)}+1 \ \forall (n_0,n) \in \tilde{N_1}\}) \ \cap \\
	& (\{0\} \times \{x \in M_\R \mid \left<(0,x),(n_0,n)\right> \geq \Min{\{0\} \times P_0}{(n_0,n)}+\frac{1}{2} \ \forall (n,\nu) \in \tilde{N_2}\}\\
	=& (\{0\} \times \{x \in M_\R \mid \left<x,n\right> \geq \Min{P_0}{n}+1 \ \forall (n_0,n) \in \tilde{N_1}\} ) \ \cap \\
	& (\{0\} \times \{x \in M_\R \mid \left<x,2n\right> \geq \Min{P_0}{2n}+1 \ \forall (n_0,n) \in \tilde{N_2}\} )\\
	\end{align*}
	We have for $(n_0,n)\in \tilde{N_1}$ that $\Min{P_0}{n}=\Min{P}{(n_0,n)}\in \Z$ and so $n\in N_1(P_0)$. With the same argument we get $n\in N_2(P_0)$ for all $(n_0,n)\in \tilde{N_2}$. It remains to show that for all $n\in N_1(P_0)$ we have a $n_0\in \Z$ with $(n_0,n)\in \tilde{N_1}$ and analogously for $N_2(P_0)$ and $\tilde{N_1}$.
	
	Let be $n\in N_1(P_0)$, i. e. $\Min{P_0}{n}\in \Z$. Then there is a vertex $e$ of $P_0$ with $\Min{P_0}{n}=\left<e,n\right>$. We choose such a vertex $f=(1,f')$ of $P$ such that $n_0:=\left<e-f',n\right>\in \Z$ is maximal. We have
	\begin{align*}
	\left<(1,f'),(n_0,n)\right>=\left<f',n\right> + n_0=\left<e,n\right>=\Min{P_0}{n}
	\end{align*}
	and since $\left<(0,e),(n_0,n)\right>=\left<e,n\right>=\Min{P_0}{n}$ we get that the dual lattice vector $(n_0,n)$ defines a hyperplane containing $(0,e)$ and $(1,f')$ that is also a supporting hyperplane for $P$ since we have chose $n_0$ maximal. So we get
	\begin{align*}
	\Min{P}{(n_0,n)}=\left<e,n\right>=\Min{\{0\} \times P_0}{(n_0,n)}
	\end{align*}
	and so we have $(n_0,n)\in \tilde{N_1}$.
	
	Let be $n\in N_2(P_0)$, i.e. $2\Min{P_0}{n}\in \Z\setminus 2\Z$. Then there is a vertex $e$ of $P_0$ with $\Min{P_0}{n}=\left<e,n\right>$. We now choose a vertex $f=(1,f')$ of $P$, so that $n_0:=\left<e-f',n\right>-\frac{1}{2}\in \Z$ is maximal. Then we have
	\begin{align*}
	\left<(1,f'),(n_0,n)\right>=\left<f',n\right> + n_0=\left<e,n\right>-\frac{1}{2}=\Min{P_0}{n}-\frac{1}{2}
	\end{align*}
	and so the dual lattice vector $(n_0,n)$ defines a hyperplane containing $(1,f')$ and this is also a supporting hyperplane for $P$ since we chose $n_0$ maximal. So we get
	\begin{align*}
		\Min{P}{(n_0,n)}+\frac{1}{2}=\left<e,n\right>=\Min{\{0\} \times P_0}{(n_0,n)}
	\end{align*}
	and so we have $(n_0,n)\in \tilde{N_2}$.
\end{proof}

If the middle polytope is a lattice polytope, then the situation becomes easier. This was already seen in \cite[4.3]{Boh24b}. We now understand this situation as a corollary.

\begin{coro}
Let $P\subseteq \R \times M_\R$ be a lattice polytope of lattice width $2$ with a lattice polytope $P_0\subseteq M_\R$ as middle polytope of $P$. Then $F(P)=\{0\} \times F(P_0)$.
\end{coro}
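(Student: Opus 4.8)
The plan is to deduce this directly from Theorem \ref{Fine_int_by_middle_polytope} by observing that the hypothesis that $P_0$ is a lattice polytope forces the partition $N\setminus\{0\} = N_1(P_0)\cup N_2(P_0)$ to degenerate. Indeed, if $P_0$ is a lattice polytope, then for every $n\in N\setminus\{0\}$ the minimum $\Min{P_0}{n}$ is attained at a vertex of $P_0$, which is a lattice point, so $\Min{P_0}{n}\in\Z$. Hence $N_1(P_0) = N\setminus\{0\}$ and $N_2(P_0) = \emptyset$.

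With this, I would substitute into the two pieces provided by Theorem \ref{Fine_int_by_middle_polytope}. The set $F_2(P)$ is defined by a condition ranging over the empty set $N_2(P_0)$, so $F_2(P) = \{0\}\times M_\R$ and imposes no constraint. The set $F_1(P)$ becomes
\begin{align*}
F_1(P) = \{0\}\times\{x\in M_\R \mid \langle x,n\rangle \geq \Min{P_0}{n}+1 \ \forall n\in N\setminus\{0\}\},
\end{align*}
and the set on the right is by definition exactly $F(P_0)$. Therefore $F(P) = F_1(P)\cap F_2(P) = F_1(P) = \{0\}\times F(P_0)$.

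I do not anticipate a genuine obstacle here; the work is entirely in Theorem \ref{Fine_int_by_middle_polytope}, and the corollary is just the specialization where the half-integral middle polytope happens to be integral. The only point worth stating carefully is why $N_2(P_0)$ is empty — i.e. that the minimum of a linear functional over a lattice polytope is an integer because it is attained at a vertex — after which everything is a direct rewriting of the theorem's conclusion.
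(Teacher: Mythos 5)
Your proof is correct and coincides with the paper's own argument: both deduce from $P_0$ being a lattice polytope that $N_2(P_0)=\emptyset$ and $N_1(P_0)=N\setminus\{0\}$, so Theorem \ref{Fine_int_by_middle_polytope} reduces to $F(P)=F_1(P)=\{0\}\times F(P_0)$. You merely spell out the (trivial) reason $\Min{P_0}{n}\in\Z$ slightly more explicitly than the paper does.
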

\begin{proof}
Since $P_0$ is a lattice polytope, we have $N_2(P_0)=\emptyset, N_1(P_0)=N\setminus \{0\}$ and therefore $F(P)=F_1(P)=\{0\} \times F(P_0)$.
\end{proof}

We now introduce some new notation to work directly in $M_\R$.

\begin{defi}
Let $P_0\subseteq M_\R$ be a half-integral polytope. Then we set
\begin{align*}
	\bar{F}(P_0):=\bar{F}_1(P_0)\cap \bar{F}_2(P_0)
\end{align*}
with
\begin{align*}
\bar{F}_1(P_0):=&\{x \in M_\R \mid \left<x,n\right> \geq \Min{P_0}{n}+1 \ \forall n \in N_1(P_0)\}\\
\bar{F}_2(P_0):=&\{x \in M_\R \mid \left<x,2n\right> \geq \Min{P_0}{2n}+1 \ \forall n \in N_2(P_0)\}.
\end{align*}
\end{defi}

With this notation we now have the following short version of \ref{Fine_int_by_middle_polytope}.

\begin{coro}
Let $P\subseteq \R \times M_\R$ be a lattice polytope of lattice width $2$, $P \subseteq [-1,1] \times M_\R $ and $P_0\subseteq M_\R$ the middle polytope of $P$. Then we have
\begin{align*}
	F(P)=\{0\} \times \bar{F}(P_0).
\end{align*}
\end{coro}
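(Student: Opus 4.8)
The plan is to observe that this corollary is merely a notational repackaging of Theorem \ref{Fine_int_by_middle_polytope}, so the proof consists of matching the two sets of definitions term by term. First I would note that, comparing the definition of $\bar F_1(P_0)$ with the expression for $F_1(P)$ in Theorem \ref{Fine_int_by_middle_polytope}, we have $F_1(P)=\{0\}\times\bar F_1(P_0)$, and likewise $F_2(P)=\{0\}\times\bar F_2(P_0)$ by comparing $\bar F_2(P_0)$ with $F_2(P)$. These identifications are immediate since the defining inequalities are literally the same, just with the first coordinate fixed to $0$.

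Next I would combine these two equalities. Since $\{0\}\times(-)$ commutes with intersection, namely $(\{0\}\times A)\cap(\{0\}\times B)=\{0\}\times(A\cap B)$ for any subsets $A,B\subseteq M_\R$, Theorem \ref{Fine_int_by_middle_polytope} gives
\begin{align*}
F(P)=F_1(P)\cap F_2(P)=\bigl(\{0\}\times\bar F_1(P_0)\bigr)\cap\bigl(\{0\}\times\bar F_2(P_0)\bigr)=\{0\}\times\bigl(\bar F_1(P_0)\cap\bar F_2(P_0)\bigr)=\{0\}\times\bar F(P_0),
\end{align*}
using the definition $\bar F(P_0)=\bar F_1(P_0)\cap\bar F_2(P_0)$ in the last step. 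This completes the argument.

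There is no genuine obstacle here: the content of the result is entirely contained in Theorem \ref{Fine_int_by_middle_polytope}, and the only thing to check is that the abbreviations $\bar F_1$, $\bar F_2$, $\bar F$ were set up to record exactly the $M_\R$-factors of $F_1(P)$, $F_2(P)$, $F(P)$. If anything, the one point worth stating explicitly is that the hypotheses of the corollary ($P$ a lattice polytope of lattice width $2$ with $P\subseteq[-1,1]\times M_\R$ and middle polytope $P_0$) are precisely those of Theorem \ref{Fine_int_by_middle_polytope}, so the theorem applies verbatim.
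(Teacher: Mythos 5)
Your proposal is correct and is exactly the argument the paper intends: the corollary is stated without proof as a direct restatement of Theorem \ref{Fine_int_by_middle_polytope}, since $F_1(P)=\{0\}\times\bar F_1(P_0)$ and $F_2(P)=\{0\}\times\bar F_2(P_0)$ by definition and $\{0\}\times(-)$ commutes with intersection. Nothing further is needed.
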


\begin{rem}
Note that $\bar{F}(P_0)$ is not in general the Fine interior of the half-integral polytope $P_0\subseteq M_\R$. We have that $\bar{F}(P_0)$ is the Fine interior of $P_0$ if and only if the primitive normal vectors to $F(P_0)$ are all in $N_1(P_0)$. In particular, $\bar{F}(P_0)=F(P_0)$ if $P_0$ is a lattice polytope. We also have the following inclusions
\begin{align*}
F(F(2P_0))\subseteq 2F(P_0)\subseteq 2 \bar{F}(P_0)\subseteq F(2P_0).
\end{align*}
\end{rem}

If the middle polytope completely determines the Fine interior, we can focus on some special polytopes with this middle polytope. For a rational polytope $P_0 \subseteq M_\R$ we have the pyramid over $P$ defined by $\mathrm{Pyr}(P_0):= \conv{(1,0), \{0\}\times P_0}\subseteq \R \times M_\R$ and if we translate $2\mathrm{Pyr}(P_0)$ by the lattice vector $(-1,0)$ to a subpolytope of $[-1,1] \times M_\R$, then we get $P_0$ as the middle polytope. So we have the following corollary.  

\begin{coro}\label{FineInt_by_middle_polytop}
In the situation from \ref{Fine_int_by_middle_polytope} we have
\begin{align*}
	F(P)=\{0\} \times \bar{F}(P_0) \cong  F(2\cdot \mathrm{Pyr}(P_0))
\end{align*}
Moreover, if $P_0$ is even a lattice polytope, then we also have 
\begin{align*}
	F(P)=F([-1,1] \times P_0).
\end{align*}
\end{coro}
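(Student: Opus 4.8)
The plan is to reduce both displayed identities to the two corollaries that immediately precede this statement: the ``short version'' of Theorem~\ref{Fine_int_by_middle_polytope}, which says $F(P)=\{0\}\times\bar{F}(P_0)$ for a lattice polytope of lattice width $2$ contained in $[-1,1]\times M_\R$, and the corollary dealing with a lattice middle polytope, which says $F(P)=\{0\}\times F(P_0)$ in that case. The first asserted equality \emph{is} the short version. For the two isomorphisms I would use that the Fine interior is equivariant under affine unimodular transformations, in particular under lattice translations (which is immediate from $\Min{P+v}{n}=\Min{P}{n}+\langle v,n\rangle$); thus it is enough to exhibit, inside a lattice translate of $[-1,1]\times M_\R$, a lattice polytope with middle polytope $P_0$ and compute its Fine interior.

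The representatives I would take are $Q:=2\,\mathrm{Pyr}(P_0)-(1,0)$ and $[-1,1]\times P_0$. The routine checks are: the slice of $2\,\mathrm{Pyr}(P_0)$ at height $1$ is $\{1\}\times P_0$, so $Q\subseteq[-1,1]\times M_\R$ has middle polytope $P_0$; as $P_0$ is half-integral, $2P_0$ and hence $Q$ is a lattice polytope; and $[-1,1]\times P_0$ likewise has middle polytope $P_0$ and is a lattice polytope whenever $P_0$ is. Both $Q$ and $[-1,1]\times P_0$ contain points at height $1$ and at height $-1$, so each lies in $[-1,1]\times M_\R$ with the direction $(1,0)$ realizing width $2$. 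Granting that the short version of Theorem~\ref{Fine_int_by_middle_polytope} (resp.\ the corollary for a lattice middle polytope) applies, we get $F(Q)=\{0\}\times\bar{F}(P_0)$ and $F([-1,1]\times P_0)=\{0\}\times F(P_0)=\{0\}\times\bar{F}(P_0)$; translating $Q$ back then gives $F(2\,\mathrm{Pyr}(P_0))=F(Q)+(1,0)\cong\{0\}\times\bar{F}(P_0)=F(P)$, and the prism identity reads $F([-1,1]\times P_0)=F(P)$.

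The only genuine obstacle is the hypothesis ``lattice width $2$'' in those corollaries, since a priori $Q$ or $[-1,1]\times P_0$ could have lattice width strictly below $2$ (for instance if $P_0$ is thin in some direction). One way around this is to note that in the proof of Theorem~\ref{Fine_int_by_middle_polytope} the assumption $\lw{P}=2$ is used only to force $F(P)\subseteq\{0\}\times M_\R$, which in fact follows already from $P\subseteq[-1,1]\times M_\R$ together with $P$ reaching both heights $\pm1$; since $Q$ and $[-1,1]\times P_0$ satisfy exactly that, the conclusions of the two corollaries apply to them verbatim. Alternatively, taking the corollaries as black boxes, one checks that the degenerate case causes no trouble: if the representative has lattice width $<2$ its Fine interior is empty by the first Lemma of this section, and the right-hand side is empty as well --- for the prism because $\lw{[-1,1]\times P_0}=\min\{2,\lw{P_0}\}<2$ forces $\lw{P_0}<2$, hence $F(P_0)=\emptyset$ and $F(P)=\{0\}\times F(P_0)=\emptyset$; for the pyramid because a direction witnessing $\lw{2\,\mathrm{Pyr}(P_0)}<2$ must have non-trivial $M_\R$-component and the width of $2\,\mathrm{Pyr}(P_0)$ along it dominates that of its base $2P_0$, so $\lw{2P_0}<2$, whence $F(2P_0)=\emptyset$ and, by the inclusion $2\bar{F}(P_0)\subseteq F(2P_0)$ recorded in the preceding Remark, $\bar{F}(P_0)=\emptyset$ and $F(P)=\{0\}\times\bar{F}(P_0)=\emptyset$. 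Either way both sides of each identity coincide.
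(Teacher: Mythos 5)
Your proposal is correct and follows essentially the same route as the paper, which obtains the corollary by translating $2\,\mathrm{Pyr}(P_0)$ by $(-1,0)$ into $[-1,1]\times M_\R$, observing its middle polytope is $P_0$, and applying Theorem \ref{Fine_int_by_middle_polytope} (respectively the corollary for a lattice middle polytope, for the prism $[-1,1]\times P_0$), together with translation invariance of the Fine interior. Your additional care about the lattice-width-$2$ hypothesis for the representatives, settled either by inspecting where the proof of \ref{Fine_int_by_middle_polytope} uses the width or by the empty-Fine-interior argument in the degenerate case, is a legitimate refinement of a point the paper passes over silently.
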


\section{The Fine interior of a lattice $3$-polytope of lattice width $2$}

\begin{rem}
	Let $P$ be as in the theorem, with a lattice polygon $P_0\subseteq M_\R$ as middle polytope of $P$.\\
	Then $F(P)=F(P_0)\times \{0\}=\conv{\mathrm{int}(P)\cap M}$ is a lattice polygon.
\end{rem}

\subsection{The Fine interior of a lattice $3$-polytope of lattice width $2$ is a half-integral polygon}

\begin{thm}\label{Fine_int_half_integral}
Let $M\cong \Z^2$ be a lattice of rank $2$, $P\subseteq \R \times M_\R$ a lattice $3$-polytope of lattice width $2$, $P\subseteq [-1,1] \times M_\R$, and $\dim(F(P))=2$. Then $F(P)\subseteq \{0\} \times M_\R$ is a half-integral polygon.
\end{thm}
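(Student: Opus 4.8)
The plan is to use Theorem~\ref{Fine_int_by_middle_polytope}, which tells us that $F(P) = \{0\} \times \bar F(P_0)$ where $\bar F(P_0) = \bar F_1(P_0) \cap \bar F_2(P_0)$ is cut out by the inequalities $\left<x,n\right> \geq \Min{P_0}{n}+1$ for $n \in N_1(P_0)$ and $\left<x,2n\right> \geq \Min{P_0}{2n}+1$ for $n \in N_2(P_0)$. Since $P_0$ is a half-integral polygon in $M_\R \cong \R^2$, all the numbers $\Min{P_0}{n}$ appearing are integers when $n \in N_1(P_0)$, and all the numbers $\Min{P_0}{2n}$ are odd integers when $n \in N_2(P_0)$ (because $\Min{P_0}{n} \in \tfrac12\Z \setminus \Z$ forces $\Min{P_0}{2n} = 2\Min{P_0}{n}$ to be an odd integer). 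Hence every defining inequality of $\bar F(P_0)$ has the form $\left<x,v\right> \geq c$ with $v$ a primitive lattice vector (either $n$, or $2n$ which becomes primitive after dividing — but careful: $2n$ is not primitive, so the relevant statement is $\left<x,n\right> \geq \tfrac{c}{2}$ with $c$ odd, i.e. a half-integral right-hand side against a primitive normal). So $\bar F(P_0)$ is an intersection of half-spaces each of which has primitive lattice normal and half-integral (in fact integral or half-integral) offset.

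The key step is then to argue that a bounded region cut out by finitely many half-spaces of the form $\left<x,n\right> \geq a_n$ with $n \in N$ primitive and $a_n \in \tfrac12\Z$, living in a rank-$2$ lattice, is automatically a half-integral polygon, i.e. all its vertices lie in $\tfrac12 M$. A vertex of such a polygon is the unique solution of a system $\left<x,n_1\right> = a_{n_1}$, $\left<x,n_2\right> = a_{n_2}$ for two of the defining normals $n_1, n_2$ that are linearly independent. By Cramer's rule, $x = \tfrac{1}{\det(n_1,n_2)}(\text{integer combination of } a_{n_1}, a_{n_2})$. This is not automatically half-integral unless $|\det(n_1,n_2)| \in \{1,2\}$, so the real content is to show the relevant determinants are $\pm 1$ or $\pm 2$. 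This is where the hypothesis $\dim F(P) = 2$ and the specific structure from Theorem~\ref{Fine_int_by_middle_polytope} must enter: one should show that at any vertex $x$ of $\bar F(P_0)$, among the normals to the two edges meeting at $x$, at most one comes from $N_2(P_0)$ after the doubling. Indeed, if $n_1, n_2 \in N_1(P_0)$ the vertex is cut out by two integral equations with primitive normals; writing $x$ in terms of the edge directions one sees $x$ lies in $M$ scaled by $1/|\det(n_1,n_2)|$, and one invokes that $F(P)$ has full dimension together with the fact that $P_0$ is the middle polytope of a lattice $3$-polytope to bound this determinant.

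I expect the main obstacle to be exactly this determinant bound: controlling $|\det(n_1, n_2)|$ for the normals $n_1, n_2$ of two adjacent edges of $\bar F(P_0)$. The clean way is probably to go back to $P$ itself: an edge normal of $F(P)$ must be "visible" from $P$ in the sense that it supports a face of $P$ of dimension $\le 1$; combined with the width-$2$ condition (so the $\pm n_{lw}$ direction is one such normal with offset, and the geometry transverse to it is what is being described by $P_0$), one extracts that consecutive edge normals of $F(P)$ form a basis or an index-$2$ sublattice of $N$. Concretely I would: (1) invoke Theorem~\ref{Fine_int_by_middle_polytope} to reduce to analyzing $\bar F(P_0)$; (2) classify the defining inequalities into the "integral-normal, integral-offset" type and the "integral-normal, half-integral-offset" type; (3) show a vertex cut out by two type-one inequalities is a lattice point (using that their normals, being edge normals of a polygon containing lattice points in its interior region, span $N$ or an index-$2$ sublattice, and in the index-$2$ case the offsets still give a half-integral intersection); (4) show a vertex involving a type-two inequality is at worst half-integral by the same determinant computation with one odd offset; (5) conclude all vertices lie in $\tfrac12 M$, hence $F(P) = \{0\}\times\bar F(P_0)$ is a half-integral polygon. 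The subtle point throughout is that a priori a polygon cut out by half-integral-offset half-spaces with primitive normals could have vertices with denominator $3$ or more if two normals have determinant $\ge 3$; ruling that out is the crux, and it should follow from the rigidity of being the Fine interior of a width-$2$ lattice $3$-polytope rather than from formal nonsense about half-spaces alone.
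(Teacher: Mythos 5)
There is a genuine gap, and it sits exactly where you predicted: the determinant bound you hope for is not just unproven in your sketch, it is false as stated. Adjacent edge normals of a two-dimensional Fine interior are in general \emph{not} restricted to span $N$ or an index-$2$ sublattice. Already for Fine interiors that are lattice polygons (the case $N_2(P_0)=\emptyset$, where $F(P)$ is the convex hull of the interior lattice points of a lattice polygon), consecutive primitive edge normals can have determinant $3$ or more --- take for instance a Fine interior equal to $\conv{(0,0),(2,1),(1,2)}$, whose normals at the vertex $(2,1)$ span an index-$3$ sublattice --- and the vertices are nevertheless integral. So integrality/half-integrality of the vertices of $\bar{F}(P_0)$ is not a consequence of a local condition $|\det(n_1,n_2)|\leq 2$ at each vertex; the divisibility in the Cramer numerator comes from global geometry, and your plan gives no mechanism to produce it. Moreover, even where your determinant bound would hold, step (3)/(4) of your outline needs more than you say: with $|\det(n_1,n_2)|=2$ and offsets in $\tfrac12\Z$ the intersection point can be quarter-integral (e.g.\ $n_1=(1,0)$, $n_2=(1,2)$, offsets $0$ and $\tfrac12$ give $y=\tfrac14$), so an extra parity argument would be required and is not supplied. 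Your reduction via Theorem \ref{Fine_int_by_middle_polytope} and the sorting of the defining inequalities into ``integral offset'' ($N_1(P_0)$) and ``half-integral offset'' ($N_2(P_0)$) types is correct and matches the paper's starting point, but from there the argument does not go through.

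For comparison, the paper avoids any vertex-by-vertex determinant analysis. It proves the global inclusion $\bar{F}(P_0)\subseteq \conv{x\in \bar{F}(P_0)\mid 2x\in M}$ by taking an arbitrary edge $e$ of the half-integral hull $\conv{x\in \bar{F}(P_0)\mid 2x\in M}$ and showing that $e$ is already an edge of $\bar{F}(P_0)$, splitting into the cases where the affine hull of $e$ does or does not contain lattice points. The contradiction in each case is obtained by choosing a half-integral point of $P_0$ of maximal height above $e$, using Pick's theorem to locate an intermediate half-integral point, and then exhibiting supporting-line inequalities for $P_0$ that force either an additional half-integral point of $\bar{F}(P_0)$ above $e$ (contradicting maximality of the hull) or $\lw{}$-type height bounds on $P_0$ contradicting membership of the assumed point in $\bar{F}(P_0)$. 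If you want to salvage your approach, you would need to replace the false determinant lemma by an argument of this global kind; as written, the crux of your proof is missing and the expected lemma it rests on is not true.
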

\begin{proof}
	Let be $P_0\subseteq M_\R$ the middle polygon of $P$. By \ref{Fine_int_by_middle_polytope} it is sufficient to show that $\bar{F}(P_0)$ is a half-integral polygon. To see this, we will prove that
	\begin{align*}
		\bar{F}(P_0) \subseteq \conv{x\in \bar{F}(P_0) \mid 2x \in M}.
	\end{align*}
	We can assume that we have at least two lattice points in $\bar{F}(P_0)$, since lattice 3-polytopes with at most $1$ interior lattice point cannot have a Fine interior of dimension $2$ by \cite{BKS22}. 	So $\conv{x\in \bar{F}(P_0) \mid 2x \in M}$ has at least dimension $1$ and we can look at any edge $e\preceq \conv{x\in \bar{F}(P_0) \mid 2x \in M}$. Now it is enough to see that $e$ is also an edge of $\bar{F}(P_0)$. We have two cases, either the affine hull of $e$ contains lattice points or it does not.
	
	If the affine hull of $e$ contains lattice points, then after a suitable affine unimodular transformation we can assume that the line segment $\conv{(0,0), (1/2,0)}$ is a subset of $e$ and that we have no points of $\conv{x\in \bar{F}(P_0) \mid 2x \in M}$ with positive second coordinate. If we can now show that there are no points of $P_0$ with second coordinate greater than $1$, we get that there are no points of $\bar{F}(P_0)$ with positive second coordinate and so we are done. Let $x_m=(x_{m1}, x_{m2})\in P_0$ be a half-integral point with maximal second coordinate. Then the triangle $\conv{(0,0), (1/2,0), x_m}$ contains a half-integral point with second coordinate $1/2$ by Pick's theorem. With a suitable shearing we can assume that this point is $(1/2,1/2)$ and so we get that  $1/2 \leq x_{m1} \leq x_{m2}$ by convexity of the triangle. If $x_{m2}>1$, then we have for $n\in N\setminus \{0\}$ at least one of the following inequalities 
	\begin{align*}
		\left<(0,0),n\right>\leq& \left<(1/2,1/2),n\right> \ \text{for}  \text{or}\\
		\left<(1/2,0),n\right>\leq& \left<(1/2,1/2),n\right>\ \text{or}\\
		\left<(x_m,n\right>< \left<(1,1),n\right> <& \left<(1/2,1/2),n\right>\ \text{or}\\
		\left<(x_m,n\right>< \left<(1/2,1),n\right> <& \left<(1/2,1/2),n\right>.
	\end{align*}
	These inequalities imply $\left<(1/2,1/2),n\right> \geq \Min{\bar{F}(P_0)}{n}$ and so we get $(1/2,1/2)\in \conv{x\in \bar{F}(P_0) \mid 2x \in M}$ which contradicts $x_{m2}>1$. So we must have $x_{m2}\leq 1$ and so we are done.
	
	If the affine hull of $e$ contains no lattice points then we start our argument in a similar way. After a suitable affine unimodular transformation we can assume  that the line segment $\conv{(0,-1/2), (1/2,-1/2)}$ is a subset of $e$ and we have no points of $\conv{x\in \bar{F}(P_0) \mid 2x \in M}$ with second coordinate greater than $-1/2$. If we can now proof that there are no points of $P_0$ with second coordinate greater than $0$, we are done. Let $x_m=(x_{m1}, x_{m2})\in P_0$ again be a half-integral point with maximal second coordinate. Then, by Pick's theorem, the triangle $\conv{(0,-1/2), (1/2,-1/2), x_m}$ contains a half-integral point with second coordinate $0$.  With another affine unimodular transformation we can assume that this point is $(1/2,0)$ and get so that $1/2 \leq x_{m1} \leq x_{m2}+1/2$ from the convexity of the triangle. With analogous inequalities as in the first case, we now get that $x_{m2}\leq 1/2$. So if $x_{m2}>0$, then we have $x_m=(1/2,1/2)$ or $x_m=(1,1/2)$. In the first case we get $\Min{P_0}{(1,-1)}=0$, because otherwise we get $(0,0)\in \interior{P_0}$ and therefore the contradiction $(0,0)\in \bar{F}(P_0)$. But $\Min{P_0}{(1,-1)}=0$ also leads to a contradiction, since $(0,-1/2)\in \bar{F}(P_0)$. Similary we get contradictions if $x_m=(1,1/2)$. All in all we see that $x_{m2}\leq 0$ and so we are done.
\end{proof}

The theorem also gives us also possibilities to compute $\bar{F}(P_0)$, since we only have to check which half-integral points are contained. If we know the support of the Fine interior of the lattice polygon $2P_0$, i.e. all the dual lattice vectors $n\in N$ with $\Min{2P_0}{n}+1=\Min{F(2P_0)}{n}$, we can use the following corollary. Note that the Fine interior here is just the convex hull of the interior lattice points, since $2P_0$ is a lattice polygon (\cite[2.9]{Bat17}).

\begin{coro}
Let $P$ be as in the theorem, $P_0$ its half-integral middle polygon. Then we get $2\bar{F}(P_0)$ as the convex hull of the following finite sets of lattice points
\begin{align*}
&\{x \in \partial F(2P_0)\cap M \mid \left<x,n\right> \neq \Min{2P_0}{n}+1 \forall n \in S_F(2P_0)\cap N_1(P_0) \},\\
&F(F(2P_0))\cap M.
\end{align*}
\end{coro}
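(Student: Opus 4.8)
The plan is to prove that the lattice polygon $2\bar F(P_0)$ equals the convex hull of $2\bar F(P_0)\cap M$, and then to check that this lattice-point set is exactly the union of the two displayed finite sets. That $2\bar F(P_0)$ is a lattice polygon is immediate from Theorem~\ref{Fine_int_half_integral}: since $\dim F(P)=2$, the polygon $\bar F(P_0)$ is half-integral, hence $2\bar F(P_0)=\conv{2\bar F(P_0)\cap M}$. First I would rescale the defining inequalities: from $\Min{2P_0}{n}=2\Min{P_0}{n}$ and $\Min{P_0}{2n}=2\Min{P_0}{n}=\Min{2P_0}{n}$ one gets
\begin{align*}
2\bar F_1(P_0)&=\{y\in M_\R\mid \langle y,n\rangle\ge \Min{2P_0}{n}+2\ \forall n\in N_1(P_0)\},\\
2\bar F_2(P_0)&=\{y\in M_\R\mid \langle y,n\rangle\ge \Min{2P_0}{n}+1\ \forall n\in N_2(P_0)\}.
\end{align*}

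Since $N\setminus\{0\}=N_1(P_0)\cup N_2(P_0)$, each inequality $\langle y,n\rangle\ge\Min{2P_0}{n}+1$ defining $F(2P_0)$ is either listed above or implied by one of those, so $2\bar F(P_0)\subseteq F(2P_0)$; in particular $F(2P_0)$ is $2$-dimensional. As $2P_0$ is a lattice polygon I would split
\begin{align*}
F(2P_0)\cap M=\bigl(\partial F(2P_0)\cap M\bigr)\ \sqcup\ \bigl(\interior{F(2P_0)}\cap M\bigr),
\end{align*}
and note that the second piece equals $F(F(2P_0))\cap M$ by \cite[2.9]{Bat17} applied to the lattice polygon $F(2P_0)$: its interior is convex, so the convex hull of its interior lattice points meets $M$ in exactly those lattice points. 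Since every lattice point of $2\bar F(P_0)$ lies in $F(2P_0)\cap M$, it remains to decide which points of the two pieces belong to $2\bar F(P_0)$.

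For an interior lattice point $y\in\interior{F(2P_0)}\cap M$ I would use that $\langle y,n\rangle>\Min{F(2P_0)}{n}$ for every $n\in N\setminus\{0\}$; as $y\in M$ and $2P_0$, $F(2P_0)$ are lattice polygons all the relevant numbers are integers, so $\langle y,n\rangle\ge\Min{F(2P_0)}{n}+1\ge\Min{2P_0}{n}+2$. This is precisely the $N_1(P_0)$-condition and more than the $N_2(P_0)$-condition, so $y\in 2\bar F(P_0)$ and the whole set $F(F(2P_0))\cap M$ lies in $2\bar F(P_0)$. For a boundary lattice point $y\in\partial F(2P_0)\cap M$, the inequality $\langle y,n\rangle\ge\Min{2P_0}{n}+1$ already holds for all $n$, so the $N_2(P_0)$-conditions are automatic and $y\in 2\bar F(P_0)$ if and only if $\langle y,n\rangle\ge\Min{2P_0}{n}+2$, equivalently $\langle y,n\rangle\ne\Min{2P_0}{n}+1$, for every $n\in N_1(P_0)$. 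For $n\in N_1(P_0)\setminus S_F(2P_0)$ one has $\Min{F(2P_0)}{n}\ne\Min{2P_0}{n}+1$ by definition of $S_F(2P_0)$, while $\Min{F(2P_0)}{n}\ge\Min{2P_0}{n}+1$ always and both are integers, so $\Min{F(2P_0)}{n}\ge\Min{2P_0}{n}+2$ and hence $\langle y,n\rangle\ge\Min{F(2P_0)}{n}>\Min{2P_0}{n}+1$ automatically. Thus only the finitely many $n\in S_F(2P_0)\cap N_1(P_0)$ need to be tested, which identifies the first displayed set with $\partial F(2P_0)\cap M\cap 2\bar F(P_0)$; together with the interior case this shows that the two displayed sets together are exactly $2\bar F(P_0)\cap M$, and taking convex hulls proves the corollary.

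The step I expect to require the most care is the reduction from all of $N_1(P_0)$ to the finite set $S_F(2P_0)\cap N_1(P_0)$. It relies on $S_F(2P_0)$ being finite — only primitive dual vectors can be tight, since for $k\ge 2$ one has $\Min{2P_0}{kn}+1=k\Min{2P_0}{n}+1<k(\Min{2P_0}{n}+1)=\Min{F(2P_0)}{kn}$ — together with the observation used above that a dual vector outside the support of $F(2P_0)$ gives an inequality strictly slack at every lattice point of $F(2P_0)$. Everything else is routine bookkeeping with the integrality supplied by Theorem~\ref{Fine_int_half_integral} and by the lattice-polygon structure of $2P_0$ and $F(2P_0)$.
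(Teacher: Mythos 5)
Your proof is correct and follows essentially the route the paper intends (the paper leaves this corollary unproved, deriving it directly from Theorem \ref{Fine_int_half_integral} and the remark preceding it): rescale the defining inequalities of $\bar{F}(P_0)$, note $2\bar{F}(P_0)\subseteq F(2P_0)$, and check lattice points, with interior lattice points of $F(2P_0)$ automatic and boundary ones tested only against the tight $N_1(P_0)$-directions in $S_F(2P_0)$. The integrality bookkeeping and the use of \cite[2.9]{Bat17} are exactly the ingredients the paper relies on, so no further comment is needed.
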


From the proof of the theorem we get another corollary. We have shown there that an edge of $\bar{F}(P_0)$ without lattice point has only lattice distance $1/2$ to a supporting line which contains therefore lattice points. But this means that such an edge of $\bar{F}(P_0)$ cannot exist and we get the following corollary.

\begin{coro}\label{lpoint_on_edge}
Let $P$ be as in the theorem with $\dim F(P)=2$. Then we have for all edges $E\preceq F(P)$ that $E\cap M\neq \emptyset$.
\end{coro}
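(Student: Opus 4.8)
I want to show that every edge $E \preceq F(P)$ contains a lattice point. Since $\dim F(P) = 2$, Theorem \ref{Fine_int_half_integral} tells us $F(P) = \{0\} \times \bar F(P_0)$ is a half-integral polygon, so it suffices to prove the corresponding statement for $\bar F(P_0) \subseteq M_\R$: every edge of $\bar F(P_0)$ meets $M$. The key observation, already extracted inside the proof of Theorem \ref{Fine_int_half_integral}, is the following. Suppose $E \preceq \bar F(P_0)$ is an edge whose affine hull contains \emph{no} lattice point. The argument in the second case of that proof shows that, after an affine unimodular transformation, a half-integral segment such as $\conv{(0,-1/2),(1/2,-1/2)}$ lies in $E$, there are no half-integral points of $\bar F(P_0)$ with second coordinate exceeding $-1/2$, and moreover $P_0$ has no points with second coordinate greater than $0$. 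From this one deduces a contradiction with $E$ being a genuine edge of $\bar F(P_0)$: the boundary lattice line $\{x_2 = 0\}$ supports $P_0$ at lattice distance only $1/2$ from $E$, which forces $\bar F(P_0)$ to reach up to $\{x_2 = 0\}$, so $E$ cannot be a face after all.

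First I would set up the reduction: invoke Theorem \ref{Fine_int_half_integral} to replace $F(P)$ by the half-integral polygon $\bar F(P_0)$, and note that, by the same theorem's proof, $\bar F(P_0)$ is the convex hull of its half-integral points. Hence every edge $E$ of $\bar F(P_0)$ is the convex hull of the half-integral points of $\bar F(P_0)$ lying on it, and in particular $E$ has at least two half-integral endpoints. If the affine hull of $E$ contains a lattice point, these half-integral points together with the lattice point force $E \cap M \neq \emptyset$ (a half-integral segment of affine-lattice length $\geq 1$ on a line through a lattice point contains a lattice point), and we are done. So the whole problem reduces to ruling out edges whose affine hull is lattice-point-free.

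Second, I would carry out the lattice-point-free case. After an affine unimodular change of coordinates I may assume $E$ is horizontal at height $-1/2$, that $\conv{(0,-1/2),(1/2,-1/2)} \subseteq E$, and that $\bar F(P_0) \subseteq \{x_2 \leq -1/2\}$. The computation in the proof of Theorem \ref{Fine_int_half_integral} (the ``second case'') then shows that $P_0 \subseteq \{x_2 \leq 0\}$; I would reproduce just enough of that argument (Pick's theorem on the triangle spanned by the two segment endpoints and a maximal-height half-integral vertex of $P_0$, plus the four elementary inequalities) to get this bound, being careful to also dispose of the borderline vertices $x_m \in \{(1/2,1/2),(1,1/2)\}$ exactly as done there. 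Granted $P_0 \subseteq \{x_2 \leq 0\}$, the functional $n = (0,-1)$ satisfies $\Min{P_0}{n} \in \{0\}$ or, if $\Min{P_0}{n} < 0$, one gets $(0,0) \in \interior(P_0)$ and hence the contradiction $(0,0) \in \bar F(P_0) \subseteq \{x_2 \leq -1/2\}$; so $\Min{P_0}{n} = 0$, $n \in N_1(P_0)$, and the defining inequality for $\bar F_1(P_0)$ reads $\left<x,(0,-1)\right> \geq 1$, i.e. $x_2 \leq -1$ on $\bar F(P_0)$. But then $E$, at height $-1/2$, is not contained in $\bar F(P_0)$ at all — contradiction. (Equivalently: $E$ lies strictly in the interior of the slab, so it cannot be a face.)

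\textbf{Main obstacle.} The delicate point is not the reduction but making the lattice-point-free case airtight: one must be sure that the normalization ``$\conv{(0,-1/2),(1/2,-1/2)} \subseteq E$ and nothing of $\bar F(P_0)$ is higher'' is actually achievable by an \emph{integral} affine transformation for \emph{every} lattice-point-free edge (this uses that a primitive rational direction with no lattice point on its line through a half-integral point can be put in standard position, together with half-integrality of the endpoints of $E$), and that the handful of boundary configurations for the maximal vertex $x_m$ of $P_0$ are genuinely excluded rather than merely ``typically'' excluded — this is where one has to be most careful, and it is precisely the case analysis already written out in the last paragraph of the proof of Theorem \ref{Fine_int_half_integral}, which I would cite and, if needed, lightly expand rather than redo from scratch.
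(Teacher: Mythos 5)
Your proposal is essentially the paper's own argument: the paper deduces this corollary directly from the second case in the proof of Theorem \ref{Fine_int_half_integral}, noting that an edge of $\bar{F}(P_0)$ whose affine hull contains no lattice point would lie at lattice distance only $1/2$ below a supporting line of $P_0$ containing lattice points, which is impossible, while an edge whose affine hull does meet $M$ automatically contains a lattice point because its endpoints are half-integral. One small repair to your final step: given $P_0\subseteq\{x_2\le 0\}$ the value $\Min{P_0}{(0,-1)}$ cannot be negative (so that alternative is vacuous), but it could a priori be $1/2$ rather than $0$; in that case $(0,-1)\in N_2(P_0)$ and the defining inequality of $\bar{F}_2(P_0)$ gives $x_2\le -1$ on $\bar{F}(P_0)$, the same contradiction, so your assertion that necessarily $\Min{P_0}{(0,-1)}=0$ with $(0,-1)\in N_1(P_0)$ should be replaced by this short two-case check.
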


\subsection{The Fine interior vs. the convex hull of the interior lattice points}

For a lattice polygon, we know from \cite[2.9]{Bat17} that the Fine interior is just the convex hull of the interior lattice points. For $\bar{F}$ of a half-integral polygon the situation is more complicated, but there are also some connections between $\bar{F}$ and the convex hull of the interior lattice points that we want to study in this subsection.  

\begin{prop}\label{edges_of_Fineint}
Let $P_0\subseteq M_\R$ be a half-integral polygon with at least two interior lattice points, $e \preceq \conv{\interior{P_0} \cap M}$ an edge and $v \preceq \conv{\interior{P_0}\cap M}$ a vertex. Then $e$ is an edge of $\bar{F}(P_0)$ or $|e\cap M|=2$ and $v$ is a boundary lattice point of $\bar{F}(P_0)$.
\end{prop}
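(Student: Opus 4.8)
The plan is to argue by contradiction. Suppose $e \preceq \conv{\interior{P_0} \cap M}$ is an edge that is \emph{not} an edge of $\bar{F}(P_0)$; we want to conclude that $|e \cap M| = 2$ and that the two endpoints of $e$ are boundary lattice points of $\bar{F}(P_0)$. Since $\conv{\interior{P_0}\cap M} \subseteq P_0$ is a lattice polygon whose interior lattice points are exactly those of $P_0$, and since $\bar{F}(P_0)$ always contains $\conv{\interior{P_0}\cap M}$ (every interior lattice point of $P_0$ satisfies $\left<x,n\right>\geq \Min{P_0}{n}+1$ for all $n\in N_1(P_0)$, and likewise $\left<x,2n\right>\geq \Min{2P_0}{n}+1$ for $n\in N_2(P_0)$ because $2x$ is an interior lattice point of $2P_0$), we have the chain $\conv{\interior{P_0}\cap M} \subseteq \bar{F}(P_0) \subseteq P_0$. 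So $e$ lies in $\bar{F}(P_0)$; if it is not an edge of $\bar{F}(P_0)$, then it is contained in the relative interior of $\bar{F}(P_0)$ except possibly for its endpoints, i.e. $\bar{F}(P_0)$ pokes out past the line through $e$ on the side away from $\conv{\interior{P_0}\cap M}$.

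First I would set up coordinates. Let $n \in N$ be the primitive inner normal of the edge $e$ of $\conv{\interior{P_0}\cap M}$, so $\Min{\conv{\interior{P_0}\cap M}}{n}$ is attained exactly along $e$, and let $\mu := \Min{\conv{\interior{P_0}\cap M}}{n} = \Min{P_0}{n \text{-value at interior lattice points}}$. Because $e$ carries interior lattice points of $P_0$, we have $\mu \in \Z$, and in fact $\mu = \Min{P_0}{n}+1$ exactly when $e$ is supported by an edge of $\bar{F}_1$; the point of the proposition is that this can only fail in a very constrained way. The key local picture: on the line $\ell$ through $e$ there are at least two lattice points (the lattice points of $e$), they are interior to $P_0$, and $P_0$ extends at lattice distance $\geq 1$ beyond $\ell$ on the far side — otherwise $e$ would already be on $\partial P_0$, contradicting interiority. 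I would then invoke, essentially, the argument from the proof of Theorem \ref{Fine_int_half_integral} and of Corollary \ref{lpoint_on_edge}: any edge of $\bar{F}(P_0)$ whose supporting line misses the lattice must sit at lattice distance $1/2$ from a lattice-containing line, and a short case analysis (Pick's theorem on the triangle spanned by two consecutive lattice points of $e$ and an extremal half-integral vertex of $P_0$ on the far side) forces either that $e$ is cut off sharply — giving $e$ as an honest edge of $\bar{F}(P_0)$ — or that only the half-integral "hat" triangles described in Theorem \ref{hats} can stick out, and those are exactly attached to edges with precisely two lattice points. So the dichotomy $|e\cap M|=2$ is forced: if $|e\cap M|\geq 3$, a half-integral point strictly inside $P_0$ on the far side of $\ell$ would, by the same three-or-four-inequality argument as in Theorem \ref{Fine_int_half_integral}, be dominated by lattice points and hence land in $\conv{\interior{P_0}\cap M}$, contradicting that $e$ is an edge of that polygon.

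For the statement about the vertex $v \preceq \conv{\interior{P_0}\cap M}$: $v$ is an interior lattice point of $P_0$, so $v \in \bar{F}(P_0)$; I need to rule out $v \in \interior{\bar{F}(P_0)}$. If $v$ were interior to $\bar{F}(P_0)$, then since $\bar{F}(P_0)$ is a half-integral polygon (Theorem \ref{Fine_int_half_integral}, via the identification with $F(P)$), a neighborhood of $v$ inside $\bar{F}(P_0)$ would contain half-integral points on all sides of $v$; combined with the two edges of $\conv{\interior{P_0}\cap M}$ at $v$ (which by the first part are either edges of $\bar{F}(P_0)$ or two-lattice-point edges with a controlled hat), one gets that $v$ would not be a vertex of $\bar{F}(P_0)$ at all — but a vertex of $\conv{\interior{P_0}\cap M}$ in the interior of $\bar{F}(P_0)$ would mean $\bar{F}(P_0)$ bulges out past \emph{both} edge-lines at $v$, and then the half-integral extremal vertices of $P_0$ near $v$ on those far sides would again be dominated by lattice points (same inequalities), forcing $v \in \interior{\conv{\interior{P_0}\cap M}}$ — absurd. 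Hence $v \in \partial \bar{F}(P_0)$.

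The main obstacle I anticipate is the bookkeeping in the far-side case analysis: making the Pick-theorem / domination argument from Theorem \ref{Fine_int_half_integral} genuinely local to a single edge $e$ (rather than to a global extremal point of $P_0$) and checking that, when $e$ has exactly two lattice points, the only way $\bar F(P_0)$ can fail to have $e$ as an edge is via one of the explicit hat triangles of Theorem \ref{hats}, so that the endpoints of $e$ still lie on $\partial\bar F(P_0)$. Everything else is a translation of already-established facts, but this step needs the cases $x_m=(1/2,1/2)$, $x_m=(1,1/2)$ (and their analogues) handled carefully, exactly as in the proof of Theorem \ref{Fine_int_half_integral}.
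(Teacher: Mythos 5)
Your setup and the easy reductions are fine: interior lattice points of $P_0$ do lie in $\bar{F}(P_0)$, so $\conv{\interior{P_0}\cap M}\subseteq\bar{F}(P_0)\subseteq P_0$; normalizing the edge $e$ and taking a half-integral point $S\in\bar{F}(P_0)$ of maximal height beyond the line of $e$ is exactly how the paper starts, and the observation that a high $S$ would create a new interior lattice point of $P_0$ beyond that line does dispose of most of the case $|e\cap M|\geq 3$. But the substantive part of the proof is missing. After the Pick-type step one is left with the low-lying candidates $S=(1/2,1/2)$ and $S=(1/2,1)$ (and, for a two-lattice-point edge, with $s_1=1/2$ and arbitrary small $s_2$), and these cannot be "dominated by lattice points": the whole point of the later hat triangles is that $\bar{F}(P_0)$ \emph{can} protrude beyond an edge line without creating new interior lattice points. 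The paper kills these cases by a different mechanism: since every edge of $\bar{F}(P_0)$ contains a lattice point (Corollary \ref{lpoint_on_edge}) and $S$ has maximal height, $S$ is a vertex of $\bar{F}(P_0)$; one then takes the two edges of $\bar{F}(P_0)$ at $S$, pushes their supporting lines outward through $(0,1)$ and $(1,1)$, deduces that all of $P_0$ lies below these parallels, and gets a height bound on $P_0$ contradicting $S\in\bar{F}(P_0)$ — with a further subcase split ($s_2\notin\Z$ versus $s_2\in\Z$) in the two-lattice-point situation, which is precisely how one shows the segment $\conv{v,S}$ is contained in an edge of $\bar{F}(P_0)$ and hence that the vertex $v$ stays on $\partial\bar{F}(P_0)$. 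None of this appears in your proposal; you explicitly flag it as "the main obstacle" and defer it, so the core of the argument is not supplied.

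There is also a circularity problem in how you propose to finish. You argue that for a two-lattice-point edge only "the explicit hat triangles of Theorem \ref{hats}" can stick out, and for the vertex claim you combine the first part with the "controlled hat" structure. In the paper, Theorem \ref{hats} and the proposition describing those triangles are consequences of (the second part of the proof of) the present proposition, so they cannot be used here. Moreover, your direct argument for the vertex claim — that if $v$ were interior to $\bar{F}(P_0)$ then bulging beyond both edge lines at $v$ would force $v\in\interior{\conv{\interior{P_0}\cap M}}$ — is only asserted: a convex set containing $\conv{\interior{P_0}\cap M}$ can protrude beyond both edge lines at $v$ while keeping $v$ on its boundary, so some genuine lattice-geometric input (in the paper, exactly the outward-parallel argument above) is required, and it is not provided.
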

\begin{proof}
	First, we show that an edge $e \preceq \conv{\interior{P_0} \cap M}$ with $|e\cap M|>2$ is an edge of $\bar{F}(P_0)$. After a suitable affine unimodular transformation we can assume that $\conv{(0,0), (2,0)}\subseteq e$ and the second coordinate of all interior lattice points of $P_0$ is at most $0$. Suppose now that $e$ is not an edge of $\bar{F}(P_0)$, i.e. we have a half-integral point with positive second coordinate in $\bar{F}(P_0)$ and we cah choose $S=(s_1,s_2)\in \bar{F}(P_0)$ as a half-integral point with maximal possible second coordinate $s_2$. By a suitable shearing we can assume that $s_2 \geq s_1>0$. We conclude directly that $s_2<\frac{3}{2}$,  since otherwise $\conv{(0,0), (2,0), (s_1,s_2)}\subseteq \interior{P_0}$ contains the lattice point $(1,1)\in M$, which contradicts our choice of coordinates. It remains to consider the cases $S=(1/2,1)$ and $S=(1/2,1/2)$. Since $s_2$ was chosen maximal and every edge of $\bar{F}(P_0)$ contains a lattice point by \ref{lpoint_on_edge}, $S$ must be a vertex of $\bar{F}(P_0)$. Let $e_{S,l}, e_{S,r} \preceq \bar{F}(P_0)$ be the edges of $\bar{F}(P_0)$ containing $S$, where the indices $l$ and $r$ indicate that the first coordinates of the points of $e_{S_l}$ are not greater than those of $e_{S_r}$ and so $e_{S_l}$ is on the left. Take the parallel lines to the affine hull of $e_{S_l}$ through $(0,1)$ and to the affine hull of $e_{S_2}$ through $(1,1)$. Then $P_0$ must be below both of these parallel. This implies that all points of $P_0$ have second coordinate of at most $1$, which contradicts the fact that $S\in \bar{F}(P_0)$.
	
	We now show that every vertex of $\conv{\interior{P_0}\cap M}$ is a boundary lattice point of $\bar{F}(P_0)$. From the first part of the proof we already know that this is true for vertices on edges of $\bar{F}(P_0)$ with at least $3$ lattice points. So we only need to consider edges of $\conv{\interior{P_0}\cap M}$ with exactly two lattice points. After a suitable affine unimodular transformation we can assume that this edge is $\conv{(0,0), (1,0)}$ and that the second coordinate of all interior lattice points of $P_0$ to be at most $0$. Furthermore, we can still assume that $S=(s_1,s_2)\in \bar{F}(P_0)$ is a half-integral point with the maximum possible second coordinate $s_2$ and $s_2 \geq s_1>0$. We now get $s_1=\frac{1}{2}$, since otherwise $\conv{(0,0), (1,0), (s_1,s_2)}\subseteq \interior{P_0}$ contains the lattice point $(1,1)\in M$, which contradicts our choice of coordinates. By symmetry it is now sufficient to show that the line segment $\conv{(0,0), S}$ is a subset of an edge of $\bar{F}(P_0)$. As in the first part of the proof, we consider parallel lines to the affine hull of $e_{S_l}$ through $(0,1)$ and to the affine hull of $e_{S_2}$ through $(1,1)$. If $\conv{(0,0), S}$ is not a subset of an edge of $\bar{F}(P_0)$, then we get that all points of $P_0$ have a second coordinate of at most $s_2+\frac{1}{2}$. If $s_2\in \frac{1}{2}\Z \setminus \Z$, then this contradicts the fact that $S\in \bar{F}(P_0)$.If $s_2\in \Z$, then the affine hull of $(0,1)$ and $(1/2,s_2+1/2)$ must be a supporting line of $P_0$. But this also contradicts  $S\in \bar{F}(P_0)$ and so we are done.  
\end{proof}

Since we have seen, that edges of $\conv{\interior{P_0} \cap M}$ with $3$ or more lattice points are always edges of $\bar{F}(P_0)$, we get the following corollary

\begin{coro}
Let $P_0\subseteq M_\R$ be a half-integral polygon with at least three interior lattice points, which are collinear. Then $\dim \bar{F}(P_0)=1$ and we have in particular that $P_0$ is affine unimodular equivalent to a polygon in $\R \times [-1,1]$.
\end{coro}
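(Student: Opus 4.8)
The plan is to normalise so that all interior lattice points of $P_0$ lie on the $x$-axis, then to trap $P_0$ inside the horizontal strip $\R\times[-1,1]$, and finally to force $\bar{F}(P_0)$ onto the line $\{y=0\}$. A first observation I would record is that every interior lattice point of $P_0$ lies in $\bar{F}(P_0)$: for $x\in\interior{P_0}\cap M$ and $n\in N_1(P_0)$ the relation $\langle x,n\rangle>\Min{P_0}{n}$ is a strict inequality between integers, so $\langle x,n\rangle\ge\Min{P_0}{n}+1$; and for $n\in N_2(P_0)$ the integer $\langle x,2n\rangle$ is even and strictly larger than the odd integer $\Min{P_0}{2n}$, so $\langle x,2n\rangle\ge\Min{P_0}{2n}+1$. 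Hence $e:=\conv{\interior{P_0}\cap M}\subseteq\bar{F}(P_0)$, and since $P_0$ has at least three collinear interior lattice points, $e$ is a segment with $|e\cap M|\ge 3$; in particular $\dim\bar{F}(P_0)\ge 1$. After an affine unimodular transformation I may assume $\conv{(0,0),(2,0)}\subseteq e$ and that all interior lattice points of $P_0$ lie on the $x$-axis.

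Next I would prove $P_0\subseteq\R\times[-1,1]$, which is the ``in particular'' statement. Suppose $\max_{P_0}y>1$; since $P_0$ is half-integral, the maximum is attained at a vertex $x_m=(x_{m1},x_{m2})$ with $x_{m2}\ge\tfrac32$, and $T:=\conv{(0,0),(2,0),x_m}\subseteq P_0$. If $x_{m2}\ge 2$, then $T\cap\{y=1\}$ is a segment of length $2(1-1/x_{m2})\ge 1$, hence contains a lattice point; if $x_{m2}=\tfrac32$, a shearing fixing the $x$-axis reduces to $x_{m1}\in\{0,\tfrac12,1\}$, and one checks directly that $(0,1)\in T$ when $x_{m1}=0$ and $(1,1)\in T$ otherwise. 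In every case the lattice point produced is a convex combination of $(0,0),(2,0),x_m$ in which the total weight on $\{(0,0),(2,0)\}$ is strictly positive (because its second coordinate $1$ is smaller than $x_{m2}$), hence it lies in $\interior{P_0}$; as its second coordinate is nonzero, this contradicts the normalisation. Reflecting in the $x$-axis gives $\min_{P_0}y\ge -1$ as well, so $P_0\subseteq\R\times[-1,1]$.

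Finally I would deduce $\dim\bar{F}(P_0)=1$. Set $h:=\max_{P_0}y$; since $(0,0)\in\interior{P_0}$ we have $h>0$, and by the previous step $h$ is a half-integer with $0<h\le 1$, so $h\in\{\tfrac12,1\}$. If $h=1$ then $\Min{P_0}{(0,-1)}=-1\in\Z$, so $(0,-1)\in N_1(P_0)$ and every $x\in\bar{F}(P_0)$ satisfies $-x_2=\langle x,(0,-1)\rangle\ge\Min{P_0}{(0,-1)}+1=0$. If $h=\tfrac12$ then $(0,-1)\in N_2(P_0)$, and the inequality $\langle x,2n\rangle\ge\Min{P_0}{2n}+1$ defining $\bar{F}_2(P_0)$ for $n=(0,-1)$ reads $-2x_2\ge -1+1=0$. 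Either way $\bar{F}(P_0)\subseteq\{y\le 0\}$, and symmetrically $\bar{F}(P_0)\subseteq\{y\ge 0\}$; thus $\bar{F}(P_0)\subseteq\{y=0\}$, and together with $e\subseteq\bar{F}(P_0)$ this gives $\dim\bar{F}(P_0)=1$.

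I expect the bound $P_0\subseteq\R\times[-1,1]$ to be the main obstacle, and within it the borderline value $x_{m2}=\tfrac32$: there the triangle $T$ is too flat for its height-$1$ slice to be forced to meet $M$, so one genuinely has to run the finite shearing case analysis, and one must take some care to verify that the lattice point found is interior to $P_0$ and off the $x$-axis rather than merely interior to $T$. Everything else is a short combination of the half-integrality of $P_0$, the definition of $\bar{F}$, and the partition $N\setminus\{0\}=N_1(P_0)\cup N_2(P_0)$.
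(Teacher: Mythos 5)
Your proof is correct, but it takes a genuinely different route from the paper. The paper deduces this corollary in one line from Proposition~\ref{edges_of_Fineint}: an edge of $\conv{\interior{P_0}\cap M}$ with at least three lattice points is an edge of $\bar{F}(P_0)$, and applying this on both sides of the line spanned by the collinear interior lattice points pins $\bar{F}(P_0)$ to that line; the proposition itself is proved by assuming a half-integral point $S\in\bar{F}(P_0)$ of maximal height above the line, reducing to $S=(1/2,1)$ or $S=(1/2,1/2)$, and contradicting $S\in\bar{F}(P_0)$ via parallel supporting lines through $(0,1)$ and $(1,1)$, together with Corollary~\ref{lpoint_on_edge}. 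You instead argue directly on $P_0$: you first verify that every interior lattice point lies in $\bar{F}(P_0)$ (your parity argument for $n\in N_2(P_0)$, even integer versus odd integer, is exactly the right observation), then use the collinearity hypothesis to trap $P_0$ in the strip $\R\times[-1,1]$ --- a vertex $x_m$ of height at least $3/2$ would, via the height-$1$ slice of $\conv{(0,0),(2,0),x_m}$ for $x_{m2}\ge 2$ and the finite shear analysis at the borderline $x_{m2}=3/2$, produce a lattice point that you correctly check is interior to $P_0$ (positive weight on the interior base points) and off the axis --- and finally you read off $\bar{F}(P_0)\subseteq\{y=0\}$ from the vertical functional, treating the two cases $(0,-1)\in N_1(P_0)$ and $(0,-1)\in N_2(P_0)$ separately. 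Your argument is self-contained, avoiding Proposition~\ref{edges_of_Fineint} and Corollary~\ref{lpoint_on_edge} entirely, and it has the merit of establishing the ``in particular'' strip statement explicitly, which the paper's one-line derivation leaves somewhat implicit; what it gives up is generality, since the paper's proposition applies to arbitrary half-integral polygons with at least two interior lattice points and arbitrary long edges of $\conv{\interior{P_0}\cap M}$, not only to the case where all interior lattice points are collinear.
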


We now focus on the edges of $\conv{\interior{P_0} \cap M}$ which are edges of $\bar{F}(P_0)$. For them we have the following proposition.

\begin{prop}
Let $P_0\subseteq M_\R$ be a half-integral polygon with at least two interior lattice points, $e \preceq \conv{\interior{P_0} \cap M}$ an edge which is not an edge of $\bar{F}(P_0)$. Then there is only one vertex $v$ of $\bar{F}(P)$, which is on the other side of $e$ than $\interior{P_0} \cap M$. Moreover, if the lattice distance of $v$ from $e$ is $h$, then $P_0$ has at least $2(h-1)$ interior lattice points, and the triangle $\conv{e,v}$ is unimodular affine equivalent to the half-integral triangle $\conv{(0,0),(1,0),(1/2,h)}$. 
\end{prop}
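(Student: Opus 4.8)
The plan is to work throughout with the half-integral middle polygon $P_0$, using $F(P)=\{0\}\times\bar F(P_0)$ from \ref{FineInt_by_middle_polytop} and the normal-form techniques of \ref{Fine_int_half_integral} and \ref{edges_of_Fineint}. I will use three preliminary facts. First, $\interior{P_0}\cap M\subseteq\bar F(P_0)$: for $p\in\interior{P_0}\cap M$ and $n\in N\setminus\{0\}$ one has $\langle p,n\rangle>\Min{P_0}{n}$, and comparing parities gives $\langle p,n\rangle\ge\Min{P_0}{n}+1$ when $\Min{P_0}{n}\in\Z$ and $\langle p,2n\rangle\ge\Min{P_0}{2n}+1$ otherwise; in particular $e\subseteq\bar F(P_0)$. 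Second, $\bar F(P_0)\subseteq\interior{P_0}$ (same inequalities, strictly). Third, directly from the definition of $\bar F_1\cap\bar F_2$, $\bar F(P_0)=\{x\in M_\R:\langle x,n\rangle\ge\lfloor\Min{P_0}{n}\rfloor+1\ \forall\, n\ne 0\}$.

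\emph{Locating $v$ and the edges at $v$.} Since $e$ is not an edge of $\bar F(P_0)$, Proposition \ref{edges_of_Fineint} forces $|e\cap M|=2$, so after an affine unimodular transformation I take $e=\conv{(0,0),(1,0)}$ with every interior lattice point of $P_0$ having nonpositive second coordinate. If $\bar F(P_0)$ were contained in $\{x_2\le 0\}$, then $\{x_2=0\}$ would be a supporting line and $e\subseteq\bar F(P_0)$ an edge of $\bar F(P_0)$ — contradiction; hence $\bar F(P_0)$ has a half-integral point with $x_2>0$, and I choose $S=(s_1,s_2)\in\bar F(P_0)$ with $s_2$ maximal. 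Exactly as in the proof of \ref{edges_of_Fineint} in the case $|e\cap M|=2$ — a shear fixing $\{x_2=0\}$ pointwise to make $s_2\ge s_1>0$, and then observing that $s_1\ge 1$ would put $(1,1)\in\conv{(0,0),(1,0),S}\subseteq\interior{P_0}$, a contradiction — one gets $s_1=\tfrac12$. Set $v:=S=(\tfrac12,h)$ with $h:=s_2\in\tfrac12\Z_{>0}$, which is exactly the lattice distance of $v$ from $e$. As $v\notin M$ while every edge of $\bar F(P_0)$ contains a lattice point by \ref{lpoint_on_edge}, the maximality of $h$ makes $v$ a vertex of $\bar F(P_0)$, and the symmetry step in the proof of \ref{edges_of_Fineint} shows $\conv{(0,0),v}$ and $\conv{(1,0),v}$ lie on edges $E_\ell,E_r$ of $\bar F(P_0)$, which span the lines $\{x_2=2hx_1\}$ and $\{x_2=2h-2hx_1\}$.

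\emph{Uniqueness and the triangle.} Because $h=\max_{\bar F(P_0)}x_2$, neither $E_\ell$ nor $E_r$ can extend beyond $v$ (this would produce a point with $x_2>h$), so $v$ is their common endpoint. Since $E_\ell,E_r$ are facets, $\bar F(P_0)$ lies on the inner side of $\{x_2=2hx_1\}$ and of $\{x_2=2h-2hx_1\}$, hence $\bar F(P_0)\cap\{x_2\ge 0\}\subseteq\conv{(0,0),(1,0),(\tfrac12,h)}$, with equality by convexity (the three vertices lie in $\bar F(P_0)$). A vertex of $\bar F(P_0)$ with $x_2>0$ is therefore a vertex of this triangle with $x_2>0$, i.e. $v$ itself; so $v$ is the unique vertex of $\bar F(P_0)$ on the side of $e$ opposite $\interior{P_0}\cap M$, and $\conv{e,v}=\conv{(0,0),(1,0),(\tfrac12,h)}$ is the asserted half-integral triangle.

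\emph{The lattice-point bound.} This is the crux. For $h\le 2$ it is immediate, since $P_0$ has at least two interior lattice points by hypothesis, so assume $h\ge\tfrac52$. The facets $E_\ell,E_r$ have primitive inner normals $(2h,-1)$ and $(-2h,-1)$ and attain on $\bar F(P_0)$ the values $0$ and $-2h$; by the formula for $\bar F(P_0)$ above this gives $\Min{P_0}{(2h,-1)}\in\{-1,-\tfrac12\}$ and $\Min{P_0}{(-2h,-1)}\in\{-2h-1,-2h-\tfrac12\}$, so $P_0$ is trapped under the ``tent'' $\{x_2\le 2hx_1+1\}\cap\{x_2\le 2h+1-2hx_1\}$ with apex $(\tfrac12,h+1)$, while $v=(\tfrac12,h)\in\bar F(P_0)\subseteq\interior{P_0}$ and, $e$ not being an edge of $\bar F(P_0)$, the polygon $\bar F(P_0)$ also contains a half-integral point with $x_2<0$. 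The plan is to feed these facts into the convexity of $\bar F(P_0)\subseteq\interior{P_0}$ and exhibit two interior lattice points of $P_0$ on each level $x_2=0,-1,\dots,-(h-2)$ — the pair $(0,0),(1,0)$ on $x_2=0$, and a pair $(0,-j),(1,-j)$ on $x_2=-j$ obtained by forcing the section of $\bar F(P_0)$ by $\{x_2=-j\}$ to have length exceeding $1$ about $x_1=\tfrac12$ — giving $2+2(h-2)=2(h-1)$ of them; an alternative is to pass to the lattice polygon $2P_0$ and count via Pick's theorem on a suitable sub-triangle of $F(2P_0)$ carrying these points. The real obstacle is precisely here: the hat $\conv{e,v}$ itself contains no interior lattice points, so the bound genuinely relies on the fact that half-integrality of $P_0$ prevents $\bar F(P_0)$ from degenerating to a thin sliver along the segment from $v$ to $e$ and forces it to spread two-dimensionally below $\mathrm{aff}(e)$ by an amount growing with $h$; turning this into the precise count, through a careful bookkeeping of which facet normals of $P_0$ can occur near $v$, is where the work lies.
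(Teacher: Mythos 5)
Your first two steps are sound and essentially reproduce the paper's route: you reduce to $e=\conv{(0,0),(1,0)}$, reuse the argument from the proof of Proposition \ref{edges_of_Fineint} (with Corollary \ref{lpoint_on_edge}) to get the unique vertex $v=(1/2,h)$ and the identification of $\bar F(P_0)\cap\{x_2\ge 0\}$ with the triangle $\conv{(0,0),(1,0),(1/2,h)}$, and you set up the same ``tent'' $P_0\subseteq\{x_2\le 2hx_1+1\}\cap\{x_2\le 2h+1-2hx_1\}$ that the paper obtains by shifting the two supporting lines of the hat outward by one integral step. (One small caveat there: the description $\bar F(P_0)=\{x\mid\langle x,n\rangle\ge\lfloor\Min{P_0}{n}\rfloor+1\}$ by itself only yields $\lfloor\Min{P_0}{(2h,-1)}\rfloor+1\le 0$, i.e.\ an upper bound on $\Min{P_0}{(2h,-1)}$; for the lower bound $\Min{P_0}{(2h,-1)}\ge -1$ you additionally need that the facet $E_\ell$ is actually supported by the inequality belonging to $(2h,-1)$ or to $2\cdot(2h,-1)$, which deserves a sentence.)

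The genuine gap is the lattice-point bound, which you explicitly leave open (``the real obstacle is precisely here \dots is where the work lies''), and your sketched plan attacks the wrong object: trying to force wide horizontal sections of $\bar F(P_0)$ at the levels $x_2=-j$ is not obviously possible and would essentially presuppose that $(0,-j),(1,-j)$ are already interior lattice points of $P_0$. The missing idea is elementary and stays with $P_0$ itself, exactly as in the paper: since $(0,0)$ and $(1,0)$ are interior points of $P_0$, the polygon $P_0$ contains (half-integral vertices, hence) points $A$ with first coordinate $\le -1/2$ and $B$ with first coordinate $\ge 3/2$; the tent inequalities then force $A$ and $B$ to satisfy $x_2\le 1-h$. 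Now convexity of $P_0$ applied to $v$, $A$, $B$ shows that the triangle $\conv{v,A,B}\subseteq P_0$ contains all lattice points $(0,-j)$ and $(1,-j)$ with $0\le j\le h-2$ in its interior, giving at least $2(h-1)$ interior lattice points of $P_0$. Without this (or an equivalent) finishing argument your proposal does not establish the quantitative part of the proposition.
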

\begin{proof}
	We have already seen in the proof of the second part of \ref{edges_of_Fineint}, that there is only one vertex $v$ of $\bar{F}(P)$, which is on the other side of $e$ than $\interior{P_0} \cap M$ and that the triangle $\conv{e,v}$ is unimodular affine equivalent to the half-integral triangle $\conv{(0,0),(1,0),(1/2,h)}$. So we can assume that the triangle is $\conv{(0,0),(1,0),(1/2,h)}$ and we can shift the affine hulls of $(0,0), (1/2,h)$ and $(1,0), (1/2,h)$ by one integral step outside and bound with them an area which contains $P_0$. In particular, since $(0,0), (1,0)$ are interior lattice points of $P_0$ there must a points $(x_1,x_2)\in P_0$ with $x_1\leq -1/2, x_2\leq -h+1$ as well as a point with $x_1\geq 3/2, x_2\leq -h+1$. Therefore all the lattice points in 
	\begin{align*}
		\conv{(0,0), (1,0), (0,-h-2), (1,h-2)}
	\end{align*}
	are interior lattice point of $P_0$ and we therefore have at least $(2(h-1))$ interior lattice points of $P_0$.
\end{proof}

\begin{coro}
The lattice height $h$ of the triangles on edges of $\conv{\interior{P_0} \cap M}$ with two lattice points from the last proposition is bounded by $\frac{1}{2}(|P_0\cap M|+2)$.
\end{coro}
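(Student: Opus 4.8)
The plan is to read the bound off directly from the preceding proposition, with essentially no extra work. Let $e \preceq \conv{\interior{P_0} \cap M}$ be an edge with exactly two lattice points which is not an edge of $\bar{F}(P_0)$, and let $v$ be the unique vertex of $\bar{F}(P_0)$ lying on the other side of $e$ than $\interior{P_0}\cap M$, with lattice height $h$ over $e$. By the previous proposition $P_0$ then has at least $2(h-1)$ interior lattice points, that is, $|\interior{P_0}\cap M| \geq 2(h-1)$.

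Since every interior lattice point of $P_0$ is in particular a lattice point of $P_0$, we get $2(h-1) \leq |\interior{P_0}\cap M| \leq |P_0\cap M|$, and solving this inequality for $h$ yields $h \leq \frac{1}{2}|P_0\cap M| + 1 = \frac{1}{2}(|P_0\cap M|+2)$, which is exactly the asserted bound. I expect no genuine obstacle here: all the combinatorial content already sits in the lattice-point estimate of the previous proposition, and the corollary only adds the trivial observation $|\interior{P_0}\cap M| \leq |P_0\cap M|$. In fact the same argument gives the slightly sharper statement $h \leq \frac{1}{2}|\interior{P_0}\cap M| + 1$; the version phrased in terms of $|P_0\cap M|$ is presumably preferred because the total number of lattice points of $P_0$ is the quantity tracked throughout the later classification.
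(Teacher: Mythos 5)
Your derivation is correct and is exactly the intended argument: the paper states this as an immediate corollary of the preceding proposition, whose bound of at least $2(h-1)$ interior lattice points combined with $|\interior{P_0}\cap M|\leq |P_0\cap M|$ gives $h\leq \frac{1}{2}(|P_0\cap M|+2)$. Your remark that the argument even yields the sharper bound in terms of interior lattice points is also accurate.
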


Let us end this subsection with summerizing some propeties of $\bar{F}(P_0)$ we have proven so far.

\begin{thm}\label{hats}
	Let $P_0\subseteq M_\R$ be a half-integral polygon with $\dim(\bar{F}(P_0))=2$. Then $\bar{F}(P_0)$ is a half-integral polygon, every edge of $\bar{F}(P_0)$ contains lattice points and we get $\bar{F}(P_0)$ as union of $\conv{\interior{P_0} \cap M}$ with some triangles, whereby the base of each of these triangle is an edge of $\conv{\interior{P_0} \cap M}$ with two lattice point and the third vertex of the triangle has a lattice height of at most $\frac{1}{2}(|P_0\cap M|+2)$ over this edge. Moreover, every vertex of $\conv{\interior{P_0} \cap M}$ is a boundary lattice point of $\bar{F}(P_0)$.
\end{thm}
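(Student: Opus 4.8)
The plan is to treat this theorem as a compilation of the facts about $\bar F(P_0)$ already obtained in this subsection, so the work splits into (i) realizing $P_0$ as the middle polygon of a lattice $3$-polytope satisfying the hypotheses ``$P$ as in the theorem'' used throughout Section~3, (ii) quoting the earlier results, and (iii) gluing the ``hats'' description together. For (i) I would set $P:=\conv{(1,0),\{-1\}\times 2P_0}\subseteq[-1,1]\times M_\R$; its slice $\{x_0=0\}$ is exactly $P_0$, so $P_0$ is the middle polygon of $P$, and $P$ is a lattice $3$-polytope because $2P_0$ is a lattice polygon and $\dim P_0=\dim\bar F(P_0)=2$. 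From $P\subseteq[-1,1]\times M_\R$ we get $\lw P\le 2$, and for the reverse inequality I would check directly that $\{0\}\times\bar F(P_0)\subseteq F(P)$: for a dual vector $(n_0,n)$ one has $\Min{P}{(n_0,n)}=\min\bigl(n_0,\,-n_0+\Min{2P_0}{n}\bigr)$, whose maximum over $n_0\in\Z$ equals $\floor{\Min{P_0}{n}}$, and the resulting condition $\langle x,n\rangle\ge\floor{\Min{P_0}{n}}+1$ is precisely the defining inequality of $\bar F_1(P_0)$ when $n\in N_1(P_0)$ and of $\bar F_2(P_0)$ when $n\in N_2(P_0)$. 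Hence $F(P)\neq\emptyset$, so the first Lemma of Section~2 forces $\lw P=2$, and then Theorem~\ref{Fine_int_by_middle_polytope} gives $F(P)=\{0\}\times\bar F(P_0)$, in particular $\dim F(P)=2$. Finally $\interior P\cap M=\{0\}\times(\interior{P_0}\cap M)$, so \cite{BKS22}, invoked exactly as in the proof of \ref{Fine_int_half_integral}, yields $|\interior{P_0}\cap M|\ge 2$, which is the standing hypothesis of \ref{edges_of_Fineint} and of the two results preceding this theorem.

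With $P$ in hand, three of the four assertions are immediate: half-integrality of $\bar F(P_0)$ is Theorem~\ref{Fine_int_half_integral} applied to $P$; that every edge of $\bar F(P_0)$ meets $M$ is \ref{lpoint_on_edge}; and that every vertex of $Q:=\conv{\interior{P_0}\cap M}$ is a boundary lattice point of $\bar F(P_0)$ is part of \ref{edges_of_Fineint}. For the triangle description I would also record the elementary inclusion $Q\subseteq\bar F(P_0)$: for $x\in M\cap\interior{P_0}$ and $n\in N_1(P_0)$ the integer $\langle x,n\rangle$ exceeds the integer $\Min{P_0}{n}$, hence is $\ge\Min{P_0}{n}+1$, while for $n\in N_2(P_0)$ the even integer $\langle x,2n\rangle$ exceeds the odd integer $\Min{P_0}{2n}$, hence is $\ge\Min{P_0}{2n}+1$. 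Then \ref{edges_of_Fineint} says each edge $e$ of $Q$ is either already an edge of $\bar F(P_0)$ or satisfies $|e\cap M|=2$, and the proposition preceding this theorem together with its corollary gives, in the latter case, a unique vertex $v_e$ of $\bar F(P_0)$ strictly beyond $e$, the identification of $\conv{e,v_e}$ up to affine unimodular equivalence with $\conv{(0,0),(1,0),(1/2,h_e)}$, and the bound $h_e\le\frac12(|P_0\cap M|+2)$.

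The only point that needs more than a citation is the claim that these triangles together with $Q$ exhaust $\bar F(P_0)$, i.e.\ $\bar F(P_0)=Q\cup\bigcup_e\conv{e,v_e}$, the union taken over the edges $e$ of $Q$ with $|e\cap M|=2$ that are not edges of $\bar F(P_0)$. The inclusion $\supseteq$ is clear from $Q\subseteq\bar F(P_0)$ and $v_e\in\bar F(P_0)$. For $\subseteq$, a point of $\bar F(P_0)\setminus Q$ lies strictly on the outer side of a unique edge $e$ of $Q$; the portion of the convex polygon $\bar F(P_0)$ lying weakly beyond the line spanned by $e$ is again a convex polygon, and I would argue it equals $\conv{e,v_e}$ by combining the uniqueness of $v_e$ (so $v_e$ is its only vertex strictly beyond that line) with the fact that the two endpoints of $e$, being vertices of $Q$, already lie on $\partial\bar F(P_0)$ — which both prevents $\bar F(P_0)$ from extending past $e$ along its spanning line and supplies the remaining two vertices of the portion. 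This gluing argument is the \emph{main obstacle}; everything else in the theorem is pure bookkeeping on top of the results already established.
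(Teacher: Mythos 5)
The paper offers no separate proof of Theorem \ref{hats} at all — it is introduced as a summary of Theorem \ref{Fine_int_half_integral}, Corollary \ref{lpoint_on_edge}, Proposition \ref{edges_of_Fineint} and the proposition/corollary that follow it — so your compilation strategy is exactly the intended one, and your preparatory work is sound and in fact needed to make the citations legitimate: realizing $P_0$ as the middle polygon of the shifted double pyramid (this is the construction behind Corollary \ref{FineInt_by_middle_polytop}), the computation $\max_{n_0}\Min{P}{(n_0,n)}=\floor{\Min{P_0}{n}}$ giving $\{0\}\times\bar{F}(P_0)\subseteq F(P)$, hence $\lw{P}=2$ and $\dim F(P)=2$, and the bound $|\interior{P_0}\cap M|\geq 2$ via \cite{BKS22} are all correct. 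The two-dimensional case of your gluing argument is also fine (the word ``unique'' in ``a unique edge $e$'' is false near a vertex of $Q:=\conv{\interior{P_0}\cap M}$, but your argument never uses uniqueness).

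The genuine gap is the degenerate case $\dim Q=1$, i.e.\ exactly two interior lattice points, which the theorem allows and which is precisely the case of Figure \ref{2points}. There (a) ``the outer side'' of the single edge $e=Q$ is not defined, the proposition on the unique vertex $v$ beyond $e$ only makes sense per side, and $\bar{F}(P_0)$ may be the union of $Q$ with \emph{two} triangles over the same base edge (the quadrilaterals in Figure \ref{2points}), so your formula $\bar{F}(P_0)=Q\cup\bigcup_e\conv{e,v_e}$ with one $v_e$ per edge is literally wrong in this case; and (b) your key inference that $a,b\in\partial\bar{F}(P_0)$ ``prevents $\bar{F}(P_0)$ from extending past $e$ along its spanning line'' is really a two-sided neighbourhood argument: it needs points of $\bar{F}(P_0)$ strictly on both sides of $\mathrm{aff}(e)$, which you get from an interior point of $Q$ when $\dim Q=2$, but which can fail when $\bar{F}(P_0)$ lies in one closed halfplane of $\mathrm{aff}(Q)$ (the hat triangles of Figure \ref{2points}): a boundary point can sit in the relative interior of the bottom edge of $\bar{F}(P_0)$, so membership in $\partial\bar{F}(P_0)$ alone does not forbid $\bar{F}(P_0)\cap\mathrm{aff}(e)\supsetneq e$. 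To close this you need a separate argument for the segment case, e.g.\ via Corollary \ref{lpoint_on_edge}: if $\bar{F}(P_0)\cap\mathrm{aff}(Q)$ had a vertex outside $e$, the second edge of $\bar{F}(P_0)$ at that vertex would contain no lattice point, since $\bar{F}(P_0)\cap M=\interior{P_0}\cap M$ lies on the line. With that (easy) repair, and with the per-side reading of the cited proposition, your proof is complete; as it stands, the case that fills the paper's first figure is not covered.
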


\subsection{The maximal half-integral polygon to a given Fine interior}

In this subsection we want to describe the unique inclusion maximal lattice polytope to a given Fine interior and the unique inclusion maximal half-integral polygon for given $\bar{F}$. This leads also to a test, if a given polytope can occure as a Fine interior.

We start by moving out the facets of a polytope by lattice distance $1$.

\begin{defi}
Let $P\subseteq M_\R$ be a rational polytope. Then we define
\begin{align*}
P^{(-1)}:=\{x \in M_\R \mid \left<x,n\right> \geq \Min{P}{n}-1 \ \forall n \in \normalfan{P}[1]\}.
\end{align*}
\end{defi}

\begin{rem}
Moving out the facets is partially an inverse operation of computing the Fine interior. We get directly from the definition the following relations
\begin{align*}
	F(\conv{P^{(-1)}\cap M)}\subseteq F(P^{(-1)})\subseteq P.
\end{align*}
The opposite inclusions are not correct in general but they hold for Fine interiors as we see in the next lemma.
\end{rem}

\begin{lemma}
A rational $d$-polytope $Q\subseteq M_\R$ is a Fine interior of a rational polytope if and only if $Q\subseteq F(Q^{(-1)})$. Moreover, $Q$ is a Fine interior of a lattice polytope if and only if $Q\subseteq F(\conv{Q^{(-1)}\cap M)}$.
\end{lemma}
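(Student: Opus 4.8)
The plan is to exhibit, in each direction of each equivalence, the obvious candidate for the polytope having $Q$ as Fine interior, and to reduce the whole argument to two soft facts: the \emph{monotonicity of $F$} (if $A\subseteq B$ then $\Min{A}{n}\geq\Min{B}{n}$ for every $n$, so the defining inequalities of $F(A)$ are at least as strong as those of $F(B)$, whence $F(A)\subseteq F(B)$), and the two inclusions $F(\conv{P^{(-1)}\cap M})\subseteq F(P^{(-1)})\subseteq P$ recorded in the remark preceding the lemma.

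\emph{The ``if'' directions.} Assume $Q\subseteq F(Q^{(-1)})$. Applying the remark with $P:=Q$ gives $F(Q^{(-1)})\subseteq Q$, hence $F(Q^{(-1)})=Q$; and since $Q$ is a rational $d$-polytope, $Q^{(-1)}=\{x\mid\langle x,n\rangle\geq\Min{Q}{n}-1\ \forall n\in\normalfan{Q}[1]\}$ is again a rational polytope (it is rational because each $\Min{Q}{n}\in\Q$, it is bounded because the facet normals of a $d$-polytope positively span $N_\R$, and it is $d$-dimensional because it contains $Q$). So $Q$ is the Fine interior of a rational polytope. The lattice case is identical, using instead the first inclusion $F(\conv{Q^{(-1)}\cap M})\subseteq Q$ of the remark, together with the fact that $\conv{Q^{(-1)}\cap M}$ is a lattice polytope containing $Q$ (it has only finitely many lattice points because $Q^{(-1)}$ is bounded, and it is $d$-dimensional because it contains $Q$).

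\emph{The ``only if'' directions.} Assume $Q=F(P)$ for a rational polytope $P$. I would first prove $P\subseteq Q^{(-1)}$; then monotonicity yields $Q=F(P)\subseteq F(Q^{(-1)})$. For $P\subseteq Q^{(-1)}$ it suffices to show, for every primitive inner facet normal $n\in\normalfan{Q}[1]$, that $\Min{Q}{n}\leq\Min{P}{n}+1$. This is the crux: $F(P)=\bigcap_{m\in N\setminus\{0\}}\{x\mid\langle x,m\rangle\geq\Min{P}{m}+1\}$ is a polytope, so each of its facets is supported by one of these defining halfspaces; comparing affine hulls, the halfspace supporting the facet of $Q$ with normal $n$ must have normal $m=kn$ for some $k\in\Z_{\geq1}$, and then that facet lies in $\{x\mid\langle x,n\rangle=\Min{P}{n}+\tfrac1k\}$, so $\Min{Q}{n}=\Min{P}{n}+\tfrac1k\leq\Min{P}{n}+1$. (In fact $F(P)\subseteq\{x\mid\langle x,n\rangle\geq\Min{P}{n}+1\}$ forces $k=1$, but the inequality is all we need.) If moreover $P$ is a lattice polytope, then $P=\conv{P\cap M}\subseteq\conv{Q^{(-1)}\cap M}$, and monotonicity gives $Q=F(P)\subseteq F(\conv{Q^{(-1)}\cap M})$.

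The main obstacle is precisely this facet analysis: because $F(P)$ is a priori an intersection of infinitely many halfspaces, one must argue cleanly that each facet of the polytope $F(P)$ is supported by a defining halfspace whose normal is a positive integer multiple of the primitive facet normal — equivalently, that every facet of $F(P)$ sits at lattice distance exactly $1$ from the corresponding facet hyperplane of $P$. Everything else is bookkeeping with monotonicity and the remark.
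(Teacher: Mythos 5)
Your overall route is the same as the paper's: in the ``if'' directions you combine the hypothesis with the remark's inclusions $F(\conv{Q^{(-1)}\cap M})\subseteq F(Q^{(-1)})\subseteq Q$ to get equality, and in the ``only if'' directions you show a witness polytope $P$ with $F(P)=Q$ satisfies $P\subseteq Q^{(-1)}$ (resp.\ $P\subseteq\conv{Q^{(-1)}\cap M}$ in the lattice case) and finish by monotonicity. The paper's proof is exactly this, except that it simply asserts $Q'\subseteq Q^{(-1)}$ without comment, so your write-up is in fact more detailed. The one real issue is the step you yourself flag as the crux: you infer ``each facet of $F(P)$ is supported by one of the defining halfspaces'' merely from $F(P)$ being a polytope, and that inference is not valid for an intersection of infinitely many halfspaces; for instance $\bigcap_{k\geq1}\{x\in\R\mid x\geq-1/k\}\cap\{x\in\R\mid x\leq1\}=[0,1]$, and the facet $\{0\}$ lies on none of the defining hyperplanes. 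As written, the key inequality $\Min{Q}{n}\leq\Min{P}{n}+1$ for facet normals $n$ of $Q$ is therefore not yet proved.

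The gap is closable by a finiteness argument you omit. For a vertex $v$ of $P$ and $m$ in the normal cone $\sigma_v$ one has $\Min{P}{m}=\langle v,m\rangle$, so the corresponding constraint reads $\langle x-v,m\rangle\geq1$; if $m=m_1+m_2$ with $m_1,m_2\in\sigma_v\cap N\setminus\{0\}$, the constraints for $m_1$ and $m_2$ imply the one for $m$. By Gordan's lemma each $\sigma_v\cap N$ has a finite Hilbert basis, so $F(P)$ is already the intersection of the finitely many halfspaces indexed by these Hilbert bases. Once the intersection is finite, your facet analysis goes through verbatim: each facet of $Q=F(P)$ lies on some hyperplane $\{x\mid\langle x,m\rangle=\Min{P}{m}+1\}$, comparison of affine hulls gives $m=kn$ with $k\in\Z_{\geq1}$, and homogeneity of $\Min{P}{\cdot}$ together with the constraint for $n$ itself forces $k=1$, i.e.\ $\Min{Q}{n}=\Min{P}{n}+1$; hence $P\subseteq Q^{(-1)}$, and $P\subseteq\conv{Q^{(-1)}\cap M}$ when $P$ is a lattice polytope. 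Alternatively you may quote the finiteness of the support of the Fine interior and the fact that facet normals of $F(P)$ belong to it, as in \cite{Bat17,Bat23}. With that supplement your proof is complete and matches (indeed refines) the paper's argument.
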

\begin{proof}
	If the inclusions hold then $Q$ is a Fine interior since the opposite inclusions hold in general. If $Q$ is a Fine interior, then there is a $d$-polytope $Q'\subseteq M_\R$ with $F(Q')=Q$. We get $Q'\subseteq Q^{(-1)}$ and so by the monotony of the Fine interior $Q=F(Q')\subseteq F(Q^{(-1)})$.
\end{proof}


The situation becomes especially easy for lattice polygons. We have by \cite[Lemma 9]{HS09} the following proposition. 

\begin{prop}
A lattice polygon $Q$ is a Fine interior of a lattice polygon if and only if $Q^{(-1)}$ is a lattice polygon.
\end{prop}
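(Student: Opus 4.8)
The plan is to characterize when a lattice polygon $Q$ equals $F(Q^{(-1)})$ purely in terms of $Q^{(-1)}$ being a lattice polygon, using the previous lemma as the backbone. By that lemma, $Q$ is a Fine interior of a lattice polygon if and only if $Q \subseteq F(\conv{Q^{(-1)}\cap M})$. The key observation to exploit is the cited result \cite[2.9]{Bat17}: for a \emph{lattice} polygon the Fine interior is just the convex hull of its interior lattice points. So the heart of the argument is to compare $Q^{(-1)}$ with $\conv{Q^{(-1)}\cap M}$ and to show that the difference between them is invisible to the Fine-interior operation when (and only when) $Q^{(-1)}$ is a lattice polygon.

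First I would treat the easy direction. If $Q^{(-1)}$ is a lattice polygon, then $\conv{Q^{(-1)}\cap M} = Q^{(-1)}$ (a lattice polygon is the convex hull of its lattice points in dimension $2$), and the general inclusion from the remark preceding the lemma reads $Q \subseteq F(Q^{(-1)}) = F(\conv{Q^{(-1)}\cap M})$. Wait — one must be a little careful: the general inclusion from the remark gives $F(\conv{P^{(-1)}\cap M}) \subseteq F(P^{(-1)}) \subseteq P$, not the reverse. So instead I argue directly: since every defining facet inequality of $Q^{(-1)}$ is $\left<x,n\right>\ge \Min{Q}{n}-1$, the polygon $Q^{(-1)}$ moved in by lattice distance $1$ on each facet contains $Q$; more precisely $F(Q^{(-1)}) \supseteq \{x : \left<x,n\right>\ge \Min{Q^{(-1)}}{n}+1 \ \forall n\in\normalfan{Q^{(-1)}}[1]\}$, and $\Min{Q^{(-1)}}{n} = \Min{Q}{n}-1$ on those facet normals, so this set contains $Q$. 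Hence $Q\subseteq F(Q^{(-1)}) = F(\conv{Q^{(-1)}\cap M})$ and the lemma gives that $Q$ is a Fine interior of a lattice polygon. In fact this shows $Q = F(Q^{(-1)})$ by sandwiching with $F(Q^{(-1)})\subseteq Q$.

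For the converse, suppose $Q$ is a Fine interior of a lattice polygon but $Q^{(-1)}$ is not a lattice polygon, i.e. it has a vertex $v\notin M$. The strategy is to invoke \cite[Lemma 9]{HS09} — stated as the proposition immediately following, which the excerpt tells us to use — together with the structural fact that the maximal lattice polytope realizing $Q$ as Fine interior is $\conv{Q^{(-1)}\cap M}$. The point is that if $Q = F(Q_0)$ for a lattice polygon $Q_0$, then $Q_0\subseteq Q^{(-1)}$ (each facet of $Q$ forces the opposite one-step-out inequality on $Q_0$), hence $Q_0\subseteq \conv{Q^{(-1)}\cap M}$, so $\conv{Q^{(-1)}\cap M}$ is itself a lattice polygon with Fine interior containing $Q$; applying the HS09 proposition to this lattice polygon shows $(\conv{Q^{(-1)}\cap M})^{(-1)}$ is a lattice polygon. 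The final step is then the geometric claim that moving the facets of $Q^{(-1)}$ out by lattice distance $1$ and moving them back in recovers $Q^{(-1)}$ exactly when $Q^{(-1)}$ is already generated by its lattice points — equivalently, a non-lattice vertex of $Q^{(-1)}$ obstructs the equality $\conv{Q^{(-1)}\cap M} = Q^{(-1)}$, and one checks via \cite[2.9]{Bat17} that then $F(\conv{Q^{(-1)}\cap M})$ is strictly smaller than $Q$ at that corner, contradicting $Q\subseteq F(\conv{Q^{(-1)}\cap M})$.

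The main obstacle will be this last comparison: ruling out that a non-lattice vertex $v$ of $Q^{(-1)}$ could still be "filled in" after taking $\conv{(\cdot)\cap M}$ and then $F$, so that the loss is recovered. Concretely, one must show that if $v\notin M$ is a vertex of $Q^{(-1)}$, there is a facet normal $n$ of $Q$ with $\Min{\conv{Q^{(-1)}\cap M}}{n} > \Min{Q^{(-1)}}{n} = \Min{Q}{n}-1$, so the corresponding inequality $\left<x,n\right>\ge \Min{\conv{Q^{(-1)}\cap M}}{n}+1$ cuts into $Q$. This is where the two-dimensionality and the explicit formula for $P^{(-1)}$ are essential, and I expect the cleanest route is to lean entirely on \cite[Lemma 9]{HS09} rather than re-deriving it, invoking it for the lattice polygon $\conv{Q^{(-1)}\cap M}$ and combining with the maximality of $\conv{Q^{(-1)}\cap M}$ among lattice polygons with the prescribed Fine interior.
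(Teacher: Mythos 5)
The paper does not actually prove this proposition at all: it is taken verbatim from \cite[Lemma 9]{HS09} and stated with a citation only. So the relevant question is whether your standalone argument works, and it has a genuine gap in the ``if'' direction. You assert $F(Q^{(-1)}) \supseteq \{x \in M_\R \mid \left<x,n\right> \geq \Min{Q^{(-1)}}{n}+1 \ \forall n \in \normalfan{Q^{(-1)}}[1]\}$, but this inclusion is backwards: the Fine interior is the intersection of the half-planes $\left<x,n\right>\geq \Min{Q^{(-1)}}{n}+1$ over \emph{all} primitive $n\in N\setminus\{0\}$, hence it is \emph{contained in} the set cut out by the facet normals alone, and the non-facet directions can genuinely cut deeper. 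Note also that your argument never uses the hypothesis that $Q^{(-1)}$ is a lattice polygon, so it would show that every lattice polygon satisfies $Q\subseteq F(Q^{(-1)})$, which is false. Concretely, take $Q=\conv{(0,0),(1,0),(0,3)}$; then $Q^{(-1)}=\conv{(-1,-1),(-1,7),(5/3,-1)}$, the facet-normal set $\{x\geq 0,\ y\geq 0,\ 3x+y\leq 3\}$ equals $Q$, yet for $n=(-1,0)$ one has $\Min{Q^{(-1)}}{n}=-5/3$, so $F(Q^{(-1)})\subseteq\{x\leq 2/3\}$ misses the vertex $(1,0)$ of $Q$. The correct use of the hypothesis is different: if $Q^{(-1)}$ is a lattice polygon, every lattice point of $Q$ satisfies all defining inequalities of $Q^{(-1)}$ strictly, hence lies in $\interior{Q^{(-1)}}$, and then $Q=\conv{Q\cap M}\subseteq\conv{\interior{Q^{(-1)}}\cap M}=F(Q^{(-1)})$ by \cite[2.9]{Bat17}.

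The converse direction is also not established. Invoking ``the HS09 proposition'' for $\conv{Q^{(-1)}\cap M}$ is either circular (it is exactly the statement under proof) or reduces everything to the external citation --- in which case the whole proof could be that citation, as in the paper. The remaining ``final step'' you describe, that a non-lattice vertex of $Q^{(-1)}$ forces $F(\conv{Q^{(-1)}\cap M})$ to cut into $Q$, is left unproven and is subtler than your sketch suggests: in the example above, both facet lines through the non-lattice vertex $(5/3,-1)$ still contain lattice points of $Q^{(-1)}$, so no facet normal of $Q$ or $Q^{(-1)}$ detects the obstruction; it is only seen through a non-facet direction such as $(-1,0)$. So a facet-by-facet comparison of $\Min{\conv{Q^{(-1)}\cap M}}{n}$ with $\Min{Q^{(-1)}}{n}$ cannot close the argument as proposed; either cite \cite[Lemma 9]{HS09} outright, as the paper does, or give a genuinely two-dimensional argument handling all primitive directions.
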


\begin{rem}
All lattice polygons with up to 112 lattice points which are a Fine interior a lattice polygon were classified for calculations in \cite{BS24a}. The data are available on zenodo \cite{BS24b}.
\end{rem}

We now want similar results as for the Fine interior also for $\bar{F}$.

\begin{defi}
Let $P_0\subseteq M$ be a half-integral polygon with $\dim \bar{F}(P_0)=2$. Then we call $P_0$ a maximal half-integral polygon for given $\bar{F}$ if and only if 
\begin{align*}
P_0=\conv{(\bar{F}(P_0))^{(-1)}\cap M/2}.
\end{align*} 
\end{defi}

\begin{rem}
The maximal half-integral polygon for given $\bar{F}$ is unique by definition and contains all other half-integral polygons with the same $\bar{F}$. If a half-integral polygon is maximal with respect to inclusion among all half-integral polygons with the same number of interior lattice points, it is in particular maximal for given $\bar{F}$. Thus we get all inclusion maximal half-integral polygons as a subset of the polygons maximal for given $\bar{F}$. This was used to classify half-integral polygons in \cite{BS24a}.
\end{rem}


We get now analogue to above a test for Fine interiors of dimension $2$ of lattice $3$-polytopes.

\begin{coro}\label{FineInt_test}
A half-integral polygon $Q\subseteq M_\R$ is unimodular equivalent to the Fine interior of a lattice $3$-polytope if and only if 
\begin{align*}
Q=\bar{F}(\conv{Q^{(-1)}\cap M/2}).
\end{align*} 
\end{coro}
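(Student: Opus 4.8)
The plan is to mirror the structure of the preceding lemma characterizing Fine interiors of rational polytopes, but carried through with $\bar F$ in place of $F$ and with half-integral polygons in place of rational polytopes, using Theorem \ref{Fine_int_by_middle_polytope} and Theorem \ref{hats} to control what "maximal preimage" should mean. First I would recall that, by \ref{Fine_int_by_middle_polytope} and the corollaries following it, a half-integral polygon $Q$ of dimension $2$ is unimodular equivalent to the Fine interior of a lattice $3$-polytope of lattice width $2$ precisely when there is a half-integral polygon $P_0$ with $\bar F(P_0)=Q$; and by the lemma preceding the first remark (dimension count), any lattice $3$-polytope with a $2$-dimensional Fine interior automatically has lattice width $2$, so the hypothesis "lattice $3$-polytope" loses nothing. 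Hence the statement reduces to: $Q$ arises as $\bar F(P_0)$ for some half-integral $P_0$ iff $Q=\bar F(\conv{Q^{(-1)}\cap M/2})$.

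The backward direction is immediate: if $Q=\bar F(\conv{Q^{(-1)}\cap M/2})$, then $P_0:=\conv{Q^{(-1)}\cap M/2}$ is a half-integral polygon with $\bar F(P_0)=Q$, so $Q$ is realized. For the forward direction, suppose $\bar F(P_0)=Q$ for some half-integral polygon $P_0$. I would first show $P_0\subseteq Q^{(-1)}$: for every facet normal $n\in\normalfan{P_0}[1]$, the definition of $\bar F$ (whether $n\in N_1(P_0)$ contributes the inequality $\left<x,n\right>\ge\Min{P_0}{n}+1$, or $n\in N_2(P_0)$ contributes $\left<x,2n\right>\ge\Min{P_0}{2n}+1$, i.e. $\left<x,n\right>\ge\Min{P_0}{n}+\tfrac12$) forces $\Min{Q}{n}\ge\Min{P_0}{n}+\tfrac12>\Min{P_0}{n}-1$, hence $P_0$ satisfies all the defining inequalities of $Q^{(-1)}$. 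Since $P_0$ is half-integral, it follows that $P_0\subseteq\conv{Q^{(-1)}\cap M/2}=:P_0'$. Now monotonicity of $\bar F$ — which I would note follows from the fact that $\bar F$ is defined by the same family of half-spaces indexed by $N_1,N_2$, together with the observation that enlarging the polygon only decreases the relevant $\Min{\cdot}{n}$ values and can only shrink the index set $N_2$ (a subtlety to check, since the partition $N_1/N_2$ depends on $P_0$) — gives $Q=\bar F(P_0)\subseteq\bar F(P_0')$. For the reverse inclusion, $P_0'\subseteq Q^{(-1)}$ by construction, and I would use the relations collected in the remark after the definition of $P^{(-1)}$, namely $\bar F(\conv{P^{(-1)}\cap M/2})\subseteq$ (an $\bar F$-analogue of) $F(P^{(-1)})\subseteq P$ applied with $P=Q$, to conclude $\bar F(P_0')\subseteq Q$. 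Combining the two inclusions yields $Q=\bar F(P_0')=\bar F(\conv{Q^{(-1)}\cap M/2})$, as desired.

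The main obstacle I anticipate is precisely the monotonicity step for $\bar F$: unlike the ordinary Fine interior, $\bar F(P_0)$ is not intrinsic to the polygon $P_0$ in an obvious way because the partition $N=\{0\}\cup N_1(P_0)\cup N_2(P_0)$ moves as $P_0$ grows, so one must verify carefully that passing from $P_0$ to the larger $P_0'$ cannot create new "half-integral" constraints that were not present before. The clean way to handle this is to go back to Theorem \ref{Fine_int_by_middle_polytope}: both $\bar F(P_0)$ and $\bar F(P_0')$ can be recomputed as honest Fine interiors of the lattice $3$-polytopes $2\,\mathrm{Pyr}(P_0)$ and $2\,\mathrm{Pyr}(P_0')$ via \ref{FineInt_by_middle_polytop}, where ordinary monotonicity of $F$ applies directly since $\mathrm{Pyr}$ is inclusion-preserving. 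This reduces every step above to statements about genuine Fine interiors, where the analogue of the preceding lemma ($Q\subseteq F(Q^{(-1)})$, resp. $Q\subseteq F(\conv{Q^{(-1)}\cap M})$) can be quoted almost verbatim, and the only remaining bookkeeping is translating the lattice-$3$-polytope $Q^{(-1)}$ statements back into the half-integral-polygon language of $Q^{(-1)}\cap M/2$, which is routine given that $F(P)=\{0\}\times\bar F(P_0)$ identifies the two pictures.
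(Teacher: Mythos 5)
Your skeleton is the right one and matches what the paper leaves implicit: reduce via Theorem \ref{Fine_int_by_middle_polytope} and Corollary \ref{FineInt_by_middle_polytop} to the claim that $Q=\bar{F}(P_0)$ for some half-integral polygon $P_0$ if and only if $Q=\bar{F}(\conv{Q^{(-1)}\cap M/2})$, take the obvious witness for the backward direction, and for the forward direction show $P_0\subseteq \conv{Q^{(-1)}\cap M/2}$, apply monotonicity of $\bar{F}$ (your pyramid argument for monotonicity is a clean way to do this), and finish with $\bar{F}(\conv{Q^{(-1)}\cap M/2})\subseteq Q$. However, your justification of the first step is not a proof: to get $P_0\subseteq Q^{(-1)}$ you must verify the defining inequalities of $Q^{(-1)}$, i.e.\ $\Min{P_0}{n}\geq \Min{Q}{n}-1$ for every $n\in\normalfan{Q}[1]$, whereas you argue over facet normals of $P_0$, compare against the bound $\Min{P_0}{n}-1$ (which defines $P_0^{(-1)}$, not $Q^{(-1)}$), and derive $\Min{Q}{n}\geq\Min{P_0}{n}+\tfrac12$, which is an upper bound $\Min{P_0}{n}\leq\Min{Q}{n}-\tfrac12$ and hence points the wrong way. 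The step is true and easily repaired: every edge of $Q=\bar{F}(P_0)$ lies on the bounding line of one of the defining half-planes of $\bar{F}(P_0)$, whose normal is $kn$ with $k\in\Z_{\geq1}$ and $n$ the primitive edge normal, so $\Min{Q}{n}=\Min{P_0}{n}+c/k$ with $c\in\{\tfrac12,1\}$, giving $\Min{P_0}{n}\geq\Min{Q}{n}-1$.

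The genuine gap is the last inclusion. You invoke an ``$\bar{F}$-analogue'' of $F(P^{(-1)})\subseteq P$ as if it could be quoted verbatim, but $\bar{F}(\conv{Q^{(-1)}\cap M/2})\subseteq Q$ is \emph{false} for general half-integral polygons: for $Q=\conv{(\tfrac12,\tfrac12),(\tfrac32,\tfrac12),(\tfrac32,\tfrac32),(\tfrac12,\tfrac32)}$ one has $\conv{Q^{(-1)}\cap M/2}=[-\tfrac12,\tfrac52]^2$, and since $\Min{Q^{(-1)}}{\pm e_i}\notin\Z$ the corresponding constraints of $\bar{F}$ move in only by $\tfrac12$, so $\bar{F}(\conv{Q^{(-1)}\cap M/2})\supseteq[0,2]^2\not\subseteq Q$. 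The inclusion you need is exactly where the special structure of Fine interiors must enter, namely Corollary \ref{lpoint_on_edge} (equivalently Theorem \ref{hats}): every edge of $Q=\bar{F}(P_0)$ contains a lattice point, so $\Min{Q}{n}\in\Z$ for every $n\in\normalfan{Q}[1]$. With this, for any half-integral $R\subseteq Q^{(-1)}$, $x\in\bar{F}(R)$ and such $n$: if $n\in N_1(R)$ then $\langle x,n\rangle\geq\Min{R}{n}+1\geq\Min{Q}{n}$, and if $n\in N_2(R)$ then $\Min{R}{n}$ is a strict half-integer $\geq\Min{Q}{n}-1\in\Z$, hence $\geq\Min{Q}{n}-\tfrac12$ and again $\langle x,n\rangle\geq\Min{Q}{n}$; thus $\bar{F}(R)\subseteq Q$. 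Your fallback of reducing everything to the three-dimensional lemma via pyramids does not supply this step either: $F(P)=\{0\}\times Q$ is not full-dimensional, so its push-out $(\{0\}\times Q)^{(-1)}$ is not covered by the paper's definition via $\normalfan{\cdot}[1]$, and in any case the quoted lemma only yields inclusions of the form $Q\subseteq F(\cdot)$; the opposite inclusion is the nontrivial one here, and without \ref{lpoint_on_edge} it simply fails.
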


\section{Classification of Fine interiors}

The previous section gives us the following algorithm to classify the two dimensional Fine interiors of lattice $3$ polytopes with given number of lattice points.

\begin{enumerate}
	\item Take all lattice polygons with the given number of lattice points, e.g. from the dataset \cite{BS24b}.
	\item For each lattice polygon with the given number of lattice points consider all possibilities to put triangles on the edges with $2$ lattice points, which are permitted by \ref{hats}. We get a finite number of lattice polygons with hats, since the height of the hats is bounded and the lattice polygon with hats needs to be convex.
	\item Check for each lattice polygon with hats from the last step with \ref{FineInt_test}, if it is indeed a Fine interior or not and discard the polygons which are not a Fine interior.
	\item Eliminate affine unimodular equivalent Fine interior, e. g. by use of a normal form as described in \cite[section 2.1.]{BS24a}.
\end{enumerate}

This algorithm was used for calculations with up to $40$ lattice points and we got the following result.

\begin{thm}\label{classification}
	There are exactly 24 324 158 half-integral polygons with at most 40
	lattice points, which are Fine interiors of a lattice 3-polytope. The data for the vertices of these polygons are available on \cite{Boh24a}. The examples with $2$ or $3$ lattice points can be seen in figure \ref{2points} and \ref{3points}.
\end{thm}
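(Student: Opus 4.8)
The statement is a classification obtained by running the four-step algorithm above, so the plan is to justify that the algorithm is \emph{sound} (every polygon it outputs is indeed the Fine interior of a lattice $3$-polytope), \emph{complete} (every such $2$-dimensional Fine interior is produced), and \emph{finite} (each step processes finitely many objects), after which $24\,324\,158$ is the verified number of equivalence classes it returns. The starting point is the structural reduction: if $P$ is a lattice $3$-polytope with $\dim F(P)=2$, then $\dim F(P)=\dim P-1$ forces $\lw{P}=2$ (Section~2's first lemma), so after an affine unimodular change of coordinates we may take $P\subseteq[-1,1]\times M_\R$ with middle polygon $P_0$, and Corollary~\ref{FineInt_by_middle_polytop} gives $F(P)\cong\{0\}\times\bar{F}(P_0)$. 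Theorem~\ref{hats} describes $\bar{F}(P_0)$ as the union of the lattice polygon $L:=\conv{\interior{P_0}\cap M}$ with a ``hat'' on each edge of $L$ carrying exactly two lattice points, each hat being unimodularly equivalent to $\conv{(0,0),(1,0),(1/2,h)}$ for its lattice height $h$; thus the candidate list of step~(2) has exactly the right shape to capture every $\bar{F}(P_0)$.

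Completeness and finiteness come from two bounds. First, $L\cap M=\interior{P_0}\cap M$, because any lattice point in $\conv{\interior{P_0}\cap M}$ is a convex combination of interior points of $P_0$, hence lies in $\interior{P_0}$; and a direct check shows the hat $\conv{(0,0),(1,0),(1/2,h)}$ contains no lattice point other than the two on its base. Hence $|L\cap M|=|\bar{F}(P_0)\cap M|\le 40$, so $L$ ranges over the finite, explicit list of lattice polygons with at most $40$ lattice points (all of which appear in the classification underlying \cite{BS24a,BS24b}); this is step~(1). Second, the proposition preceding the hat-height corollary shows that a hat of height $h$ forces $P_0$ to contain at least $2(h-1)$ interior lattice points, so $h\le\tfrac{1}{2}|L\cap M|+1\le 21$. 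Together with the requirement that $L$ with its chosen hats be convex, this leaves only finitely many (necessarily $2$-dimensional) half-integral polygons to generate from each $L$, and by the previous paragraph their totality contains, up to equivalence, every $\bar{F}(P_0)$ that can arise; this is step~(2).

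The remaining two steps are verification. Step~(3) removes the spurious candidates using Corollary~\ref{FineInt_test}: a $2$-dimensional half-integral polygon $Q$ is affine unimodular equivalent to the Fine interior of a lattice $3$-polytope if and only if $Q=\bar{F}(\conv{Q^{(-1)}\cap M/2})$, a finite test which every genuine $\bar{F}(P_0)$ passes and which nothing else passes; so after step~(3) the list consists exactly of the $2$-dimensional Fine interiors of lattice $3$-polytopes with at most $40$ lattice points, possibly repeated. Step~(4) then collapses the repetitions by computing the normal form of \cite[Section~2.1]{BS24a} for affine unimodular equivalence; the number of surviving classes is the claimed $24\,324\,158$, with vertex data on \cite{Boh24a}, and the few polygons with $2$ or $3$ lattice points can be enumerated by hand as a sanity check.

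All the conceptual input is in Theorem~\ref{hats} and Corollary~\ref{FineInt_test} together with the two bounds above; once these are in place the problem is a completely finite search, so the substantive obstacle is no longer mathematical but computational: one must rely on the completeness of the input classification of lattice polygons with at most $40$ lattice points, implement step~(2) so that it enumerates \emph{all} convex placements of hats of bounded height on the two-point edges of each $L$ without gaps, and trust the normal-form deduplication applied to roughly $2.4\times10^{7}$ polygons. I expect the bookkeeping in step~(2) to be the most delicate point in practice, even though it is routine in principle.
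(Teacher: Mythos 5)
Your proposal is correct and follows essentially the same route as the paper: the paper gives no separate proof of Theorem \ref{classification} beyond the four-step algorithm of Section 4 resting on the results of Sections 2--3, and your soundness/completeness/finiteness justification (reduction to the middle polygon, Theorem \ref{hats} for the shape of $\bar{F}(P_0)$, the observation that hats contribute no lattice points so the base polygon has at most $40$ lattice points and the hat height is bounded, and Corollary \ref{FineInt_test} as the final filter) is exactly that reasoning made explicit. The only detail glossed over, by you and by the paper alike, is the degenerate case where $\conv{\interior{P_0}\cap M}$ is a segment with two lattice points (hats attached on both sides, as in Figure \ref{2points}), which the same enumeration covers.
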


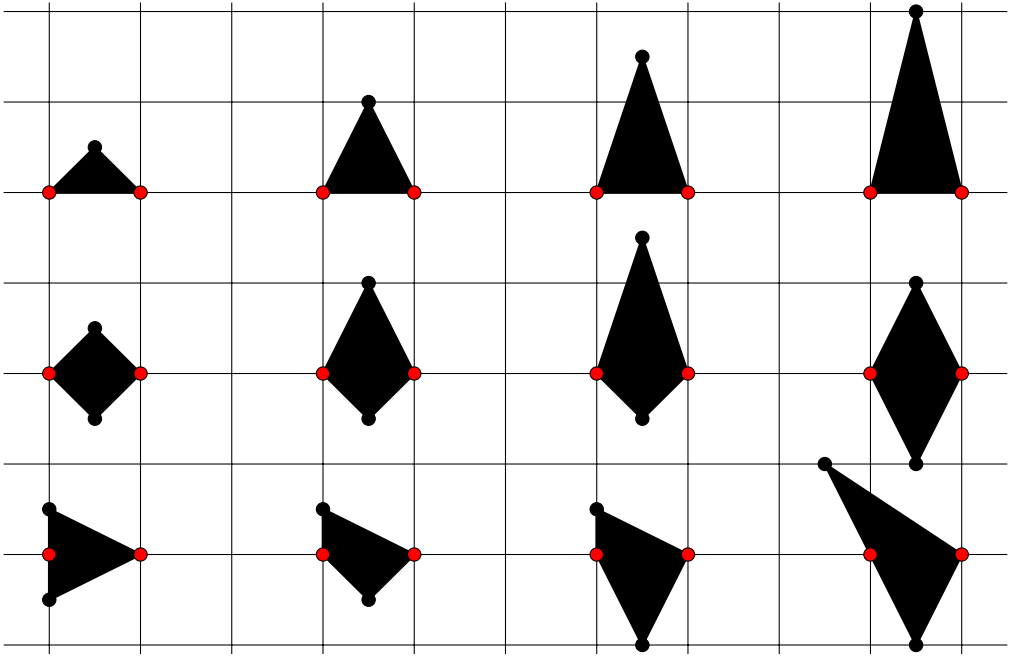
\begin{figure}
\begin{tikzpicture}[x=0.6cm,y=0.6cm]
			\draw[step=2.0,black,thin,xshift=0cm,yshift=0cm] (-11,12.2) grid (11,-2.2);
			
			\fill[line width=2pt,color=black,fill=black,fill opacity=0.3] (-10,8) -- (-9,9) -- (-8,8) -- cycle;
			\draw [line width=1pt,color=black] (-9,9)-- (-10,8);
			\draw [line width=1pt,color=black] (-9,9)-- (-8,8);
			\draw [line width=1pt,color=black] (-8,8)-- (-10,8);
			
			\draw [fill=black] (-9,9) circle (2.5pt);
			\draw [fill=black] (-10,8) circle (2.5pt);
			\draw [fill=black] (-8,8) circle (2.5pt);

			\fill[line width=2pt,color=black,fill=black,fill opacity=0.3] (-4,8) -- (-3,10) -- (-2,8) -- cycle;
			\draw [line width=1pt,color=black] (-3,10)-- (-4,8);
			\draw [line width=1pt,color=black] (-3,10)-- (-2,8);
			\draw [line width=1pt,color=black] (-2,8)-- (-4,8);
			
			\draw [fill=black] (-3,10) circle (2.5pt);
			\draw [fill=black] (-4,8) circle (2.5pt);
			\draw [fill=black] (-2,8) circle (2.5pt);

			\fill[line width=2pt,color=black,fill=black,fill opacity=0.3] (2,8) -- (3,11) -- (4,8) -- cycle;
			\draw [line width=1pt,color=black] (3,11)-- (2,8);
			\draw [line width=1pt,color=black] (3,11)-- (4,8);
			\draw [line width=1pt,color=black] (4,8)-- (2,8);
			
			\draw [fill=black] (3,11) circle (2.5pt);
			\draw [fill=black] (2,8) circle (2.5pt);
			\draw [fill=black] (4,8) circle (2.5pt);

			\fill[line width=2pt,color=black,fill=black,fill opacity=0.3] (8,8) -- (9,12) -- (10,8) -- cycle;
			\draw [line width=1pt,color=black] (9,12)-- (8,8);
			\draw [line width=1pt,color=black] (9,12)-- (10,8);
			\draw [line width=1pt,color=black] (10,8)-- (8,8);
			
			\draw [fill=black] (9,12) circle (2.5pt);
			\draw [fill=black] (8,8) circle (2.5pt);
			\draw [fill=black] (10,8) circle (2.5pt);

			\fill[line width=2pt,color=black,fill=black,fill opacity=0.3] (-10,4) -- (-9,5) -- (-8,4) -- (-9,3) -- cycle;
			\draw [line width=1pt,color=black] (-10,4)-- (-9,5);
			\draw [line width=1pt,color=black] (-9,5)-- (-8,4);
			\draw [line width=1pt,color=black] (-8,4)-- (-9,3);
			\draw [line width=1pt,color=black] (-10,4)-- (-9,3);
			
			\draw [fill=black] (-9,5) circle (2.5pt);
			\draw [fill=black] (-9,3) circle (2.5pt);
			\draw [fill=black] (-10,4) circle (2.5pt);
			\draw [fill=black] (-8,4) circle (2.5pt);

			\fill[line width=2pt,color=black,fill=black,fill opacity=0.3] (-4,4) -- (-3,6) -- (-2,4) -- (-3,3)  -- cycle;
			\draw [line width=1pt,color=black] (-4,4)-- (-3,6);
			\draw [line width=1pt,color=black] (-3,6)-- (-2,4);
			\draw [line width=1pt,color=black] (-2,4)-- (-3,3);
			\draw [line width=1pt,color=black] (-3,3)-- (-4,4);
			
			\draw [fill=black] (-3,6) circle (2.5pt);
			\draw [fill=black] (-3,3) circle (2.5pt);
			\draw [fill=black] (-4,4) circle (2.5pt);
			\draw [fill=black] (-2,4) circle (2.5pt);

			\fill[line width=2pt,color=black,fill=black,fill opacity=0.3] (2,4) -- (3,7) -- (4,4) -- (3,3) -- cycle;
			\draw [line width=1pt,color=black] (2,4)-- (3,7);
			\draw [line width=1pt,color=black] (3,7)-- (4,4);
			\draw [line width=1pt,color=black] (4,4)-- (3,3);
			\draw [line width=1pt,color=black] (3,3)-- (2,4);
			
			\draw [fill=black] (3,7) circle (2.5pt);
			\draw [fill=black] (3,3) circle (2.5pt);
			\draw [fill=black] (2,4) circle (2.5pt);
			\draw [fill=black] (4,4) circle (2.5pt);

			\fill[line width=2pt,color=black,fill=black,fill opacity=0.3] (8,4) -- (9,6) -- (10,4) -- (9,2) -- cycle;
			\draw [line width=1pt,color=black] (8,4)-- (9,6);
			\draw [line width=1pt,color=black] (9,6)-- (10,4);
			\draw [line width=1pt,color=black] (10,4)-- (9,2);
			\draw [line width=1pt,color=black] (8,4)-- (9,2);
			
			\draw [fill=black] (9,6) circle (2.5pt);
			\draw [fill=black] (9,2) circle (2.5pt);
			\draw [fill=black] (8,4) circle (2.5pt);
			\draw [fill=black] (10,4) circle (2.5pt);
			
			\fill[line width=2pt,color=black,fill=black,fill opacity=0.3] (-10,0) -- (-10,1) -- (-8,0) -- (-10,-1) -- cycle;
			\draw [line width=1pt,color=black] (-10,0)-- (-10,1);
			\draw [line width=1pt,color=black] (-10,1)-- (-8,0);
			\draw [line width=1pt,color=black] (-8,0)-- (-10,-1);
			\draw [line width=1pt,color=black] (-10,-1)-- (-10,0);
			
			\draw [fill=black] (-10,1) circle (2.5pt);
			\draw [fill=black] (-10,-1) circle (2.5pt);
			\draw [fill=black] (-10,0) circle (2.5pt);
			\draw [fill=black] (-8,0) circle (2.5pt);

			\fill[line width=2pt,color=black,fill=black,fill opacity=0.3] (-4,0) -- (-4,1) -- (-2,0) -- (-3,-1)  -- cycle;
			\draw [line width=1pt,color=black] (-4,0)-- (-4,1);
			\draw [line width=1pt,color=black] (-4,1)-- (-2,0);
			\draw [line width=1pt,color=black] (-2,0)-- (-3,-1);
			\draw [line width=1pt,color=black] (-3,-1)-- (-4,0);
			
			\draw [fill=black] (-4,1) circle (2.5pt);
			\draw [fill=black] (-3,-1) circle (2.5pt);
			\draw [fill=black] (-4,0) circle (2.5pt);
			\draw [fill=black] (-2,0) circle (2.5pt);

			\fill[line width=2pt,color=black,fill=black,fill opacity=0.3] (2,0) -- (2,1) -- (4,0) -- (3,-2) -- cycle;
			\draw [line width=1pt,color=black] (2,0)-- (2,1);
			\draw [line width=1pt,color=black] (2,1)-- (4,0);
			\draw [line width=1pt,color=black] (4,0)-- (3,-2);
			\draw [line width=1pt,color=black] (3,-2)-- (2,0);
			
			\draw [fill=black] (2,1) circle (2.5pt);
			\draw [fill=black] (3,-2) circle (2.5pt);
			\draw [fill=black] (2,0) circle (2.5pt);
			\draw [fill=black] (4,0) circle (2.5pt);

			\fill[line width=2pt,color=black,fill=black,fill opacity=0.3] (9,-2) -- (10,0) -- (7,2) -- cycle;
			\draw [line width=1pt,color=black] (8,0)-- (9,-2);
			\draw [line width=1pt,color=black] (9,-2)-- (10,0);
			\draw [line width=1pt,color=black] (10,0)-- (7,2);
			\draw [line width=1pt,color=black] (7,2)-- (8,0);
			
			\draw [fill=black] (7,2) circle (2.5pt);
			\draw [fill=black] (9,-2) circle (2.5pt);
			\draw [fill=black] (8,0) circle (2.5pt);
			\draw [fill=black] (10,0) circle (2.5pt);

\end{tikzpicture}

		\caption{All Fine interiors of dimension $2$ for lattice $3$-polytopes with $2$ interior lattice points}\label{2points}
\end{figure}

\begin{figure}
	\begin{tikzpicture}[x=0.6cm,y=0.6cm]
		\draw[step=2.0,black,thin,xshift=0.6cm,yshift=0cm] (-13.9,10.2) grid (11.9,-7.2);
		
		\fill[line width=2pt,color=black,fill=black,fill opacity=0.3] (-11,8) -- (-9,8) -- (-11,10) -- cycle;
		\draw [line width=1pt,color=black] (-11,8)-- (-9,8);
		\draw [line width=1pt,color=black] (-9,8)-- (-11,10);
		\draw [line width=1pt,color=black] (-11,10)-- (-11,8);
		
		\draw [fill=black] (-11,8) circle (2.5pt);
		\draw [fill=black] (-9,8) circle (2.5pt);
		\draw [fill=black] (-11,10) circle (2.5pt);

		\fill[line width=2pt,color=black,fill=black,fill opacity=0.3] (-7,7)  -- (-5,8) -- (-7,10) -- cycle;
		\draw [line width=1pt,color=black] (-7,7)-- (-5,8);
		\draw [line width=1pt,color=black] (-5,8)-- (-7,10);
		\draw [line width=1pt,color=black] (-7,10)-- (-7,7);
		
		\draw [fill=black] (-7,7) circle (2.5pt);
		\draw [fill=black] (-7,8) circle (2.5pt);
		\draw [fill=black] (-5,8) circle (2.5pt);
		\draw [fill=black] (-7,10) circle (2.5pt);

		\fill[line width=2pt,color=black,fill=black,fill opacity=0.3] (-3,8) -- (-2,7) -- (-1,8) -- (-3,10) -- cycle;
		\draw [line width=1pt,color=black] (-3,8)-- (-2,7);
		\draw [line width=1pt,color=black] (-2,7)-- (-1,8);
		\draw [line width=1pt,color=black] (-1,8)-- (-3,10);
		\draw [line width=1pt,color=black] (-3,10)-- (-3,8);
		
		\draw [fill=black] (-2,7) circle (2.5pt);
		\draw [fill=black] (-3,8) circle (2.5pt);
		\draw [fill=black] (-1,8) circle (2.5pt);
		\draw [fill=black] (-3,10) circle (2.5pt);

		\fill[line width=2pt,color=black,fill=black,fill opacity=0.3] (1,8) -- (2,6) -- (3,8) -- (1,10) -- cycle;
		\draw [line width=1pt,color=black] (1,8)-- (2,6);
		\draw [line width=1pt,color=black] (2,6)-- (3,8);
		\draw [line width=1pt,color=black] (3,8)-- (1,10);
		\draw [line width=1pt,color=black] (1,10)-- (1,8);
		
		\draw [fill=black] (2,6) circle (2.5pt);
		\draw [fill=black] (1,8) circle (2.5pt);
		\draw [fill=black] (3,8) circle (2.5pt);
		\draw [fill=black] (1,10) circle (2.5pt);
		
		\fill[line width=2pt,color=black,fill=black,fill opacity=0.3] (5,8) -- (6,5) -- (7,8) -- (5,10) -- cycle;
		\draw [line width=1pt,color=black] (5,8)-- (6,5);
		\draw [line width=1pt,color=black] (6,5)-- (7,8);
		\draw [line width=1pt,color=black] (7,8)-- (5,10);
		\draw [line width=1pt,color=black] (5,8)-- (5,10);
		
		\draw [fill=black] (6,5) circle (2.5pt);
		\draw [fill=black] (5,8) circle (2.5pt);
		\draw [fill=black] (7,8) circle (2.5pt);
		\draw [fill=black] (5,10) circle (2.5pt);

		\fill[line width=2pt,color=black,fill=black,fill opacity=0.3] (-10,1) -- (-9,2) -- (-11,4) -- (-12,3) --  cycle;
		\draw [line width=1pt,color=black] (-12,3)-- (-10,1);
		\draw [line width=1pt,color=black] (-10,1)--  (-9,2);
		\draw [line width=1pt,color=black] (-9,2)-- (-11,4);
		\draw [line width=1pt,color=black] (-11,4)-- (-12,3);
		
		\draw [fill=black] (-10,1) circle (2.5pt);
		\draw [fill=black] (-12,3) circle (2.5pt);
		\draw [fill=black] (-11,2) circle (2.5pt);
		\draw [fill=black] (-9,2) circle (2.5pt);
		\draw [fill=black] (-11,4) circle (2.5pt);

		\fill[line width=2pt,color=black,fill=black,fill opacity=0.3] (-8,4)  -- (-7,2) -- (-5,1) -- (-5,2) -- (-7,4) --  cycle;
		\draw [line width=1pt,color=black] (-8,4)-- (-7,2);
		\draw [line width=1pt,color=black] (-7,2)-- (-5,1);
		\draw [line width=1pt,color=black] (-5,1)-- (-5,2);
		\draw [line width=1pt,color=black] (-5,2)-- (-7,4);
		\draw [line width=1pt,color=black] (-7,4)-- (-8,4);
		
		\draw [fill=black] (-8,4) circle (2.5pt);
		\draw [fill=black] (-5,1) circle (2.5pt);
		\draw [fill=black] (-7,2) circle (2.5pt);
		\draw [fill=black] (-5,2) circle (2.5pt);
		\draw [fill=black] (-7,4) circle (2.5pt);

		\fill[line width=2pt,color=black,fill=black,fill opacity=0.3] (-4,4) -- (-3,2) -- (-2,1) -- (-1,2) -- (-3,4) -- cycle;
		\draw [line width=1pt,color=black] (-4,4)-- (-3,2);
		\draw [line width=1pt,color=black] (-3,2)-- (-2,1);
		\draw [line width=1pt,color=black] (-2,1)-- (-1,2);
		\draw [line width=1pt,color=black] (-1,2)-- (-3,4);
		\draw [line width=1pt,color=black] (-3,4)-- (-4,4);
		
		\draw [fill=black] (-4,4) circle (2.5pt);
		\draw [fill=black] (-2,1) circle (2.5pt);
		\draw [fill=black] (-3,2) circle (2.5pt);
		\draw [fill=black] (-1,2) circle (2.5pt);
		\draw [fill=black] (-3,4) circle (2.5pt);

		\fill[line width=2pt,color=black,fill=black,fill opacity=0.3] (3,2) -- (0,5) -- (1,2) -- (2,1) -- cycle;
		\draw [line width=1pt,color=black] (3,2)-- (0,5);
		\draw [line width=1pt,color=black] (0,5)-- (1,2);
		\draw [line width=1pt,color=black] (1,2)-- (2,1);
		\draw [line width=1pt,color=black] (2,1)-- (3,2);
		
		\draw [fill=black] (2,1) circle (2.5pt);
		\draw [fill=black] (0,5) circle (2.5pt);
		\draw [fill=black] (1,2) circle (2.5pt);
		\draw [fill=black] (3,2) circle (2.5pt);
		\draw [fill=black] (1,4) circle (2.5pt);
		
		\fill[line width=2pt,color=black,fill=black,fill opacity=0.3] (4,5) -- (5,2) -- (7,1) -- (7,2) -- cycle;
		\draw [line width=1pt,color=black] (4,5)-- (5,2);
		\draw [line width=1pt,color=black] (5,2)-- (7,1);
		\draw [line width=1pt,color=black] (7,1)-- (7,2);
		\draw [line width=1pt,color=black] (7,2)-- (4,5);
		
		\draw [fill=black] (4,5) circle (2.5pt);
		\draw [fill=black] (7,1) circle (2.5pt);
		\draw [fill=black] (5,2) circle (2.5pt);
		\draw [fill=black] (7,2) circle (2.5pt);
		\draw [fill=black] (5,4) circle (2.5pt);
		
				\fill[line width=2pt,color=black,fill=black,fill opacity=0.3] (8,4) -- (9,2) -- (12,0) -- (11,2) -- (9,4) --cycle;
		\draw [line width=1pt,color=black] (8,4)-- (9,2);
		\draw [line width=1pt,color=black] (9,2)-- (12,0);
		\draw [line width=1pt,color=black] (12,0)-- (11,2);
		\draw [line width=1pt,color=black] (11,2)-- (9,4);
		\draw [line width=1pt,color=black] (9,4)-- (8,4);
		
		\draw [fill=black] (8,4) circle (2.5pt);
		\draw [fill=black] (12,0) circle (2.5pt);
		\draw [fill=black] (9,2) circle (2.5pt);
		\draw [fill=black] (11,2) circle (2.5pt);
		\draw [fill=black] (9,4) circle (2.5pt);

		\fill[line width=2pt,color=black,fill=black,fill opacity=0.3] (-12,-2) -- (-10,-6) -- (-9,-4) -- (-11,-2) --  cycle;
		\draw [line width=1pt,color=black] (-12,-2)-- (-10,-6);
		\draw [line width=1pt,color=black] (-10,-6)--  (-9,-4);
		\draw [line width=1pt,color=black] (-9,-4)-- (-11,-2);
		\draw [line width=1pt,color=black] (-11,-2)-- (-12,-2);
		
		\draw [fill=black] (-12,-2) circle (2.5pt);
		\draw [fill=black] (-10,-6) circle (2.5pt);
		\draw [fill=black] (-11,-4) circle (2.5pt);
		\draw [fill=black] (-9,-4) circle (2.5pt);
		\draw [fill=black] (-11,-2) circle (2.5pt);

		\fill[line width=2pt,color=black,fill=black,fill opacity=0.3] (-8,-1)  -- (-7,-4) -- (-6,-6) -- (-5,-4) --  cycle;
		\draw [line width=1pt,color=black] (-8,-1)-- (-7,-4);
		\draw [line width=1pt,color=black] (-7,-4)-- (-6,-6);
		\draw [line width=1pt,color=black] (-6,-6)-- (-5,-4);
		\draw [line width=1pt,color=black] (-5,-4)-- (-8,-1);
		
		\draw [fill=black] (-8,-1) circle (2.5pt);
		\draw [fill=black] (-6,-6) circle (2.5pt);
		\draw [fill=black] (-7,-4) circle (2.5pt);
		\draw [fill=black] (-5,-4) circle (2.5pt);
		\draw [fill=black] (-7,-2) circle (2.5pt);

		\fill[line width=2pt,color=black,fill=black,fill opacity=0.3] (-4,-1) -- (-2,-7) -- (-1,-4) -- cycle;
		\draw [line width=1pt,color=black] (-4,-1)-- (-2,-7);
		\draw [line width=1pt,color=black] (-2,-7)-- (-1,-4);
		\draw [line width=1pt,color=black] (-1,-4)-- (-4,-1);
		
		\draw [fill=black] (-4,-1) circle (2.5pt);
		\draw [fill=black] (-2,-7) circle (2.5pt);
		\draw [fill=black] (-3,-4) circle (2.5pt);
		\draw [fill=black] (-1,-4) circle (2.5pt);
		\draw [fill=black] (-3,-2) circle (2.5pt);

		\fill[line width=2pt,color=black,fill=black,fill opacity=0.3] (0,-3) -- (2,-5) -- (3,-4) -- (3,-3) -- (1,-2) -- cycle;
		\draw [line width=1pt,color=black] (0,-3)-- (2,-5);
		\draw [line width=1pt,color=black] (2,-5)-- (3,-4);
		\draw [line width=1pt,color=black] (3,-4)-- (3,-3);
		\draw [line width=1pt,color=black] (3,-3)-- (1,-2);
		\draw [line width=1pt,color=black] (1,-2)-- (0,-3);
		
		\draw [fill=black] (3,-3) circle (2.5pt);
		\draw [fill=black] (0,-3) circle (2.5pt);
		\draw [fill=black] (2,-5) circle (2.5pt);
		\draw [fill=black] (1,-4) circle (2.5pt);
		\draw [fill=black] (3,-4) circle (2.5pt);
		\draw [fill=black] (1,-2) circle (2.5pt);
		
		\fill[line width=2pt,color=black,fill=black,fill opacity=0.3] (4,-3) -- (5,-4) -- (7,-5) -- (7,-4) -- (6,-2) -- (5,-2) -- cycle;
		\draw [line width=1pt,color=black] (4,-3)-- (5,-4);
		\draw [line width=1pt,color=black] (5,-4)-- (7,-5);
		\draw [line width=1pt,color=black] (7,-5)-- (7,-4);
		\draw [line width=1pt,color=black] (7,-4)-- (6,-2);
		\draw [line width=1pt,color=black] (6,-2)-- (5,-2);
		\draw [line width=1pt,color=black] (5,-2)-- (4,-3);
		
		\draw [fill=black] (6,-2) circle (2.5pt);
		\draw [fill=black] (4,-3) circle (2.5pt);
		\draw [fill=black] (7,-5) circle (2.5pt);
		\draw [fill=black] (5,-4) circle (2.5pt);
		\draw [fill=black] (7,-4) circle (2.5pt);
		\draw [fill=black] (5,-2) circle (2.5pt);
		
		\fill[line width=2pt,color=black,fill=black,fill opacity=0.3] (7,-3) -- (11,-5) -- (11,-3) -- (9,-2) --cycle;
		\draw [line width=1pt,color=black] (7,-3)-- (11,-5);
		\draw [line width=1pt,color=black] (11,-5)-- (11,-3);
		\draw [line width=1pt,color=black] (11,-3)-- (9,-2);
		\draw [line width=1pt,color=black] (9,-2)-- (7,-3);
		
		\draw [fill=black] (11,-3) circle (2.5pt);
		\draw [fill=black] (11,-5) circle (2.5pt);
		\draw [fill=black] (7,-3) circle (2.5pt);
		\draw [fill=black] (9,-4) circle (2.5pt);
		\draw [fill=black] (11,-4) circle (2.5pt);
		\draw [fill=black] (9,-2) circle (2.5pt);

	\end{tikzpicture}
		\caption{All Fine interiors of dimension $2$ for lattice 3-polytopes with $3$ interior lattice points}\label{3points}
\end{figure}
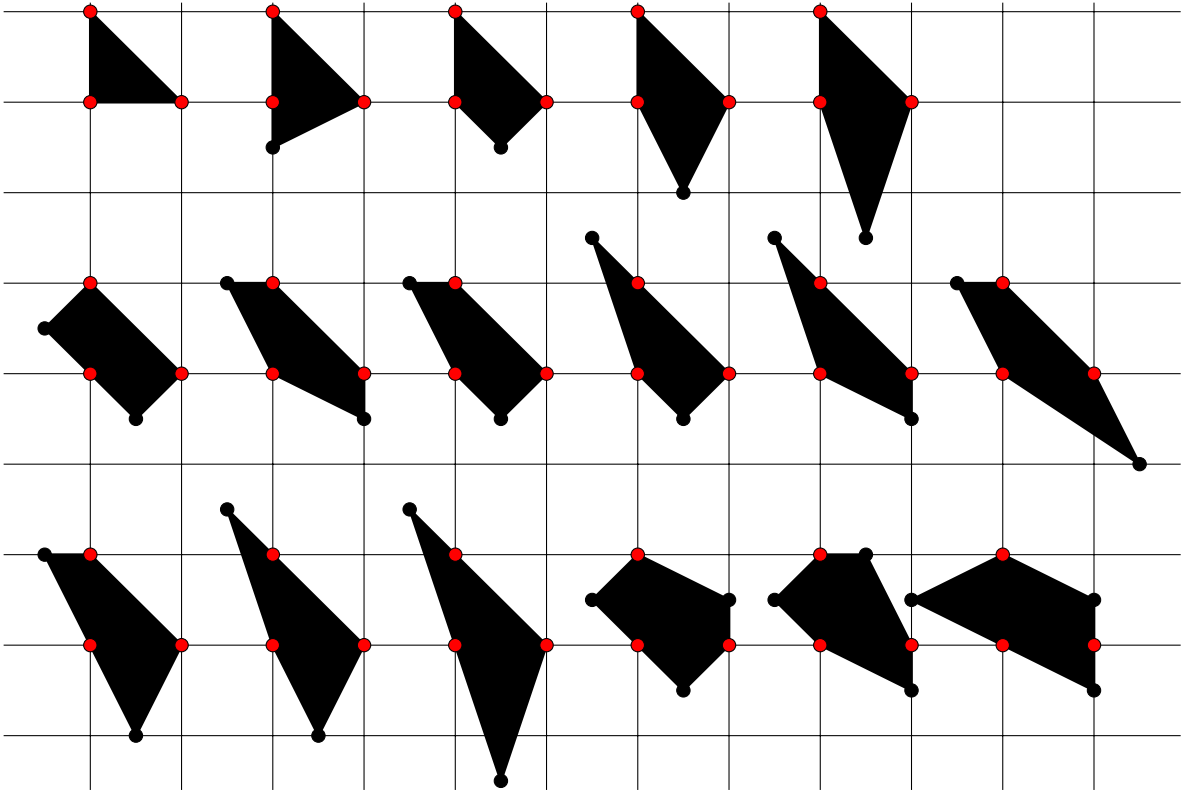

\section{Geography of minimal surfaces of general type arising from lattice $3$-polytopes of lattice width $2$}

We can also visualize invariants of the minimal surfaces which correspond the lattice $3$-polytopes with Fine interior of dimension $2$.

\begin{thm}(\cite[9.4]{Bat23}, \cite[5.1]{Gie22}) Let $\hat{Z}$ be a minimal surface coming from a nondegenerated toric hypersurface with Newton polytope $P$ of dimension $d$ with $\dim(F(P))=d-1$. Then
\begin{align*}
(K_{\hat{Z}})^{d-1}=2\mathrm{Vol}_{d-1}(F(P))
\end{align*}
\end{thm}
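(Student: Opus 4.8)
The plan is to read off $(K_{\hat Z})^{d-1}$ as the top self-intersection of a nef and big divisor on $\hat Z$ and to evaluate it combinatorially, using Batyrev's construction of the minimal model together with the fact (the lemma of Section~2) that $\dim F(P)=d-1$ forces $\lw{P}=2$. First I would normalise coordinates as in Section~2: take $M=\Z\times M'$ with $M'\cong\Z^{d-1}$, arrange $P\subseteq[-1,1]\times M'_\R$, and let $P_0\subseteq M'_\R$ be the middle polytope, so that by the corollary to Theorem~\ref{Fine_int_by_middle_polytope} the Fine interior is $F(P)=\{0\}\times\bar F(P_0)$, a $(d-1)$-dimensional rational polytope in the hyperplane $\{0\}\times M'_\R$.

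The heart of the argument is to produce a generically finite morphism of degree $\lw{P}=2$ from $\hat Z$ onto the toric variety attached to $\bar F(P_0)$, compatible with $K_{\hat Z}$. By \cite{Bat23}, $\hat Z$ is the proper transform of the nondegenerate hypersurface $Z$ in a complete toric variety $\hat X$ whose fan refines the normal fan of $P$; being minimal of general type, $K_{\hat Z}$ is nef and big, so $(K_{\hat Z})^{d-1}=\vol{K_{\hat Z}}$. The coordinate projection $\R\times M'_\R\to M'_\R$ induces a toric morphism $\hat X\to\hat Y$ onto a complete $(d-1)$-dimensional toric variety $\hat Y$ on which $\bar F(P_0)$ defines a nef and big divisor $L$; restricting to $\hat Z$ gives $\pi\colon\hat Z\to\hat Y$. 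Two points must be checked: (a) a general fibre of $\hat X\to\hat Y$ is a $\mathbb{P}^{1}$ (coming from the width coordinate) that meets $\hat Z$ in $\lw{P}=2$ points, so $\pi$ is generically finite of degree $2$; and (b) via adjunction on $\hat X$, the pluricanonical sections of $\hat Z$ are precisely the Laurent monomials supported on the dilates of $F(P)$, all of which are pulled back from $\hat Y$, whence $K_{\hat Z}=\pi^{*}L$. Step (b) is the substance of Batyrev's minimal-model construction and of \cite[5.1]{Gie22}.

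Granting (a) and (b), the intersection number falls out by the projection formula: $(K_{\hat Z})^{d-1}=(\pi^{*}L)^{d-1}=\deg(\pi)\cdot L^{d-1}=2\,L^{d-1}$. Since $L$ is a nef divisor on the complete $(d-1)$-dimensional toric variety $\hat Y$ with associated polytope $\bar F(P_0)\cong F(P)$, its top self-intersection is the normalized volume of that polytope, $L^{d-1}=\mathrm{Vol}_{d-1}(F(P))$. Equivalently, avoiding $\hat Y$, the same value is forced by the leading term of the pluricanonical Hilbert function: $L^{d-1}=\vol{L}=(d-1)!\,\lim_{k\to\infty}h^{0}(kL)/k^{d-1}$ equals $(d-1)!$ times the lattice volume of $F(P)$, i.e.\ $\mathrm{Vol}_{d-1}(F(P))$, since only the leading Ehrhart coefficient matters and lower-order terms and higher cohomology drop out. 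This gives $(K_{\hat Z})^{d-1}=2\,\mathrm{Vol}_{d-1}(F(P))$.

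The main obstacle is exactly step (b): showing that $K_{\hat Z}$ is genuinely a pullback from the base toric variety of $F(P)$ — equivalently, that the canonical ring of $\hat Z$ is the semigroup ring of the cone over $F(P)$, with the factor $2$ appearing as the degree $\lw{P}=2$ of the cover $\pi$. This is the geometric input borrowed from \cite{Bat23} and \cite{Gie22}; everything else is formal toric intersection theory together with the elementary reductions recalled in Section~2.
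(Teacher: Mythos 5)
The paper does not actually prove this statement: it is imported verbatim, with attribution, from \cite[9.4]{Bat23} and \cite[5.1]{Gie22}, so there is no internal argument to compare yours against. Judged on its own terms, your sketch captures the right geometric picture: lattice width $2$ produces a generically finite degree-$2$ projection of $\hat Z$ onto a toric $(d-1)$-fold $\hat Y$, and the factor $2$ in the formula is exactly this degree. Your step (a) is indeed easy (a generic fibre of the width projection meets the hypersurface in the zeros of a one-variable Laurent polynomial whose Newton segment has lattice length $\lw{P}=2$), and once (a) and (b) are granted, the projection formula together with $L^{d-1}=\mathrm{Vol}_{d-1}(F(P))$ for the $\Q$-divisor $L$ attached to the half-integral polytope $\bar F(P_0)$ gives the claim.

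The genuine gap is step (b). The identity $K_{\hat Z}=\pi^{*}L$ is the entire content of the theorem, and you justify it only by appeal to \cite{Bat23} and \cite[5.1]{Gie22} --- but \cite[5.1]{Gie22} is precisely the result being proved here, so as an independent proof your argument is circular; as a reduction to the literature it adds nothing beyond the citation the paper already makes. Moreover, (b) as you state it is stronger than what you can simply quote: you would need to check that the width projection extends to a morphism on Batyrev's models (this may require refining the fans), that no branch-divisor or exceptional corrections enter, and that the equality holds as ($\Q$-)divisor classes rather than merely up to terms killed by the intersection product; none of this is addressed. The cited sources obtain the formula from plurigenera computations and the identity $(K_{\hat Z})^{d-1}=\mathrm{vol}(K_{\hat Z})$ for the nef canonical class, which bypasses the morphism $\pi$ altogether; your ``alternative avoiding $\hat Y$'' only recomputes $L^{d-1}$ on the toric side and still presupposes (b). So either make the pullback statement precise and prove it (nontrivial), or treat the theorem as what it is in this paper: a quoted result.
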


\begin{coro}
If $\dim (F(P))\geq d-1$, then $2 \mathrm{Vol}_{\dim F(P)}(F(P))\in \Z$.
\end{coro}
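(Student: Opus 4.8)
The plan is to read off the integrality from the volume formula $(K_{\hat{Z}})^{d-1}=2\mathrm{Vol}_{d-1}(F(P))$ stated just above, after first reducing the case $\dim F(P)=d$ to the case $\dim F(P)=d-1$ by a prism construction. Suppose first that $\dim F(P)=d-1$. Every lattice polytope $P$ occurs as the Newton polytope of a \emph{nondegenerate} toric hypersurface $Z$, since nondegeneracy is a nonempty Zariski-open condition on the coefficients; let $\hat{Z}$ be the corresponding minimal model, a projective variety of dimension $d-1$. The theorem then gives $2\mathrm{Vol}_{d-1}(F(P))=(K_{\hat{Z}})^{d-1}$, and the right-hand side is a self-intersection number of the (Cartier) canonical divisor on the projective variety $\hat{Z}$, hence an integer. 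This settles the case $\dim F(P)=d-1$.

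For the remaining case $\dim F(P)=d$, I would pass from $P$ to the prism $\tilde{P}:=[-1,1]\times P\subseteq \R\times M_\R$, a lattice $(d+1)$-polytope whose (lattice) middle polytope is $P$ and whose lattice width is clearly $2$. By Corollary \ref{FineInt_by_middle_polytop} applied with $P_0=P$ we have $F(\tilde{P})=\{0\}\times F(P)$; in particular $\dim F(\tilde{P})=\dim F(P)=d=(d+1)-1$, and since $\{0\}\times M_\R\cong M_\R$ is an isomorphism of affine lattices, the relevant normalized volumes agree: $\mathrm{Vol}_d(F(\tilde{P}))=\mathrm{Vol}_d(F(P))$. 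Now apply the first paragraph to $\tilde{P}$ in place of $P$, with $d$ replaced by $d+1$: choosing a nondegenerate toric hypersurface with Newton polytope $\tilde{P}$ and minimal model $\hat{\tilde{Z}}$ of dimension $d$, the theorem yields $2\mathrm{Vol}_d(F(P))=2\mathrm{Vol}_d(F(\tilde{P}))=(K_{\hat{\tilde{Z}}})^{d}\in\Z$, which is the claim.

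There is no real obstacle here: the argument is bookkeeping of two inputs — the volume formula of Batyrev and Gietl, together with the standard facts that a lattice polytope is realized as the Newton polytope of some nondegenerate hypersurface and that canonical self-intersection numbers on projective minimal models are integers. The only point that needs a moment of care is the normalized-volume identification in the prism step, i.e.\ that slicing $[-1,1]\times P$ at height $0$ alters neither the affine sublattice governing $F(P)$ nor its covolume; this is immediate from $F([-1,1]\times P)=\{0\}\times F(P)$. One could also phrase both cases uniformly by setting $j:=\dim F(P)-(d-1)\in\{0,1\}$ and running the whole argument with $[-1,1]^{\,j}\times P$, where $j=0$ recovers $P$ itself.
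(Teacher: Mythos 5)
Your proof is correct and follows essentially the same route as the paper: the case $\dim F(P)=d-1$ is read off from the formula $(K_{\hat{Z}})^{d-1}=2\mathrm{Vol}_{d-1}(F(P))$, and the case $\dim F(P)=d$ is reduced to it via the prism $[-1,1]\times P$, for which $F([-1,1]\times P)=\{0\}\times F(P)$. Your extra remarks (genericity of nondegeneracy, integrality of the canonical self-intersection, the lattice-volume identification under the slice $\{0\}\times M_\R\cong M_\R$) only make explicit what the paper leaves implicit.
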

\begin{proof}
If $\dim (F(P))=d-1$, then $2\mathrm{Vol}_{d-1}(F(P))=(K_{\hat{Z}})^{d-1}\in \Z$. If $\dim (F(P))=d$, then we have for $Q:=P \times [-1,1]$, that $F(Q)=F(P)$ and $2\mathrm{Vol}_{d}(F(P))=2\mathrm{Vol}_{d}(F(Q))=(K_{\hat{Z}})^{d}\in \Z$.
\end{proof}

If $d=3$, then we get for the Chern numbers the following corollary.

\begin{coro}\cite[10.1]{Bat23}
\begin{align*}
	c_1^2(\hat{Z})=&2\mathrm{Vol}_{2}(F(P))\\
	c_2(\hat{Z})=&12(1+\chi)-c_1^2=12(1+|F(P)\cap M|)-c_1^2
\end{align*}
\end{coro}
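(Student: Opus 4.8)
The plan is to prove the two identities separately. The formula for $c_1^2(\hat Z)$ is immediate: since $\hat Z$ is a smooth projective surface we have $c_1^2(\hat Z) = (K_{\hat Z})^2$, and this is exactly the case $d = 3$ of the theorem just stated, so $c_1^2(\hat Z) = 2\mathrm{Vol}_2(F(P))$ with nothing further to check. The work is therefore entirely in the formula for $c_2$, and my approach is to combine Noether's formula with an identification of the holomorphic Euler characteristic $\chi(\mathcal{O}_{\hat Z})$ in terms of the lattice points of $F(P)$.

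For $c_2$ I would apply Noether's formula for the smooth projective surface $\hat Z$, namely $\chi(\mathcal{O}_{\hat Z}) = \frac{1}{12}\bigl((K_{\hat Z})^2 + c_2(\hat Z)\bigr)$, which rearranges to $c_2(\hat Z) = 12\,\chi(\mathcal{O}_{\hat Z}) - c_1^2(\hat Z)$. It then remains to show $\chi(\mathcal{O}_{\hat Z}) = 1 + |F(P)\cap M|$, equivalently $p_g(\hat Z) - q(\hat Z) = |F(P)\cap M|$ (this is what the middle expression records, the paper's $\chi$ being $p_g - q$). The needed input is the computation of the Hodge numbers of the minimal model in \cite[10.1]{Bat23}: via the Danilov--Khovanskii formulas for the nondegenerate hypersurface $Z$, whose $p_g$ and $q$ are birational invariants of any smooth projective model and hence of $\hat Z$, one obtains in dimension $d = 3$ that $q(\hat Z) = 0$ and $p_g(\hat Z) = |\interior{P}\cap M|$.

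Finally, I would close the gap between $|\interior{P}\cap M|$ and $|F(P)\cap M|$ with the elementary observation that $\interior{P}\cap M = F(P)\cap M$ for a lattice polytope $P$: if $x \in \interior{P}\cap M$ then for every $n \in N\setminus\{0\}$ one has $\langle x,n\rangle > \Min{P}{n}$, and since both sides are integers (as $P$ is a lattice polytope) this forces $\langle x,n\rangle \geq \Min{P}{n}+1$, so $x \in F(P)$; the reverse inclusion $F(P)\subseteq \interior{P}$ is immediate from the definition of $F(P)$. Substituting $\chi(\mathcal{O}_{\hat Z}) = 1 + |F(P)\cap M|$ into the rearranged Noether formula gives $c_2(\hat Z) = 12(1+|F(P)\cap M|) - c_1^2(\hat Z)$, and together with the first step this is the asserted pair of formulas. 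The only nontrivial ingredient, and hence the main obstacle, is the vanishing of the irregularity $q(\hat Z)$ together with the identification of $p_g(\hat Z)$; this is precisely where one must appeal to \cite[10.1]{Bat23} (respectively to the Hodge theory of nondegenerate toric hypersurfaces), while everything else is Noether's formula and a lattice-point count.
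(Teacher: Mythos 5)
Your proposal is correct and matches the intended derivation: the paper itself gives no argument for this corollary but simply cites \cite[10.1]{Bat23}, and that cited result is obtained exactly as you describe --- the first identity is the $d=3$ case of the preceding theorem, and the second is Noether's formula $c_2=12\chi(\mathcal{O}_{\hat Z})-c_1^2$ combined with the Danilov--Khovanskii computation $q(\hat Z)=0$, $p_g(\hat Z)=|\interior{P}\cap M|$ for a nondegenerate toric hypersurface (valid for $\hat Z$ since these are birational invariants and a minimal surface is smooth). Your verification that $\interior{P}\cap M=F(P)\cap M$ for a lattice polytope, using integrality of $\Min{P}{n}$ and of $\left<x,n\right>$, is also the right way to pass from interior lattice points of $P$ to lattice points of the Fine interior, so no gaps remain.
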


Remember that the Chern numbers of surfaces of general type are restricted by general theory. We have the following theorem

\begin{thm}\cite[VII. (1.1)]{BHPV04}
$c_1^2, c_2\in \Z_{\geq 0}$, $c_1^2+c_2=12\chi \equiv 0 \mod 12$ and
\begin{align*}
c_1^2\leq& 3c_2  &&\quad (\text{Bogomolov–Miyaoka–Yau inequality})\\
c_1^2\geq 2\chi-4=& \frac{c_2-36}{5} &&\quad (\text{Noether inequality})
\end{align*}
\end{thm}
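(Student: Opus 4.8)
This is a classical compilation of constraints on the Chern numbers of a minimal surface of general type, so the natural move is to quote \cite[VII]{BHPV04}; the plan is to reconstruct its main ingredients as follows. First I would fix the dictionary $c_1^2 = K_{\hat Z}^2$ and $c_2 = e(\hat Z)$ (the topological Euler number, hence automatically an integer), and obtain $c_1^2 + c_2 = 12\chi$, and in particular $12 \mid c_1^2 + c_2$, by applying Hirzebruch--Riemann--Roch to the structure sheaf: $\chi(\mathcal O_{\hat Z}) = \int_{\hat Z}\mathrm{td}(\hat Z) = \tfrac{1}{12}(c_1^2 + c_2) \in \Z$. For $c_1^2 > 0$ I would use that $\hat Z$ is minimal of general type, so that $K_{\hat Z}$ is nef and big and therefore $c_1^2 = K_{\hat Z}^2 \ge 1$. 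The nonnegativity of $c_2$ I would then obtain \emph{a posteriori}: surfaces of general type satisfy $\chi(\mathcal O_{\hat Z}) \ge 1$ (equivalently $p_g \ge q$), so once the Bogomolov--Miyaoka--Yau inequality is available, $c_1^2 \le 3 c_2$ together with $c_1^2 + c_2 = 12\chi$ forces $c_2 \ge 3\chi \ge 3$.

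The one genuinely deep input, and the step I expect to be the main obstacle, is the Bogomolov--Miyaoka--Yau inequality $c_1^2 \le 3 c_2$. Here I would simply invoke Yau's theorem: the canonical model of $\hat Z$ carries a K\"ahler--Einstein metric of negative Ricci curvature, and Chern--Weil theory applied to this metric produces a pointwise inequality between the curvature $(2,2)$-forms representing $3c_2$ and $c_1^2$, which integrates to $3c_2 \ge c_1^2$ (with equality exactly for ball quotients). Alternatively one could run Miyaoka's algebraic argument through the $K_{\hat Z}$-semistability of the cotangent bundle $\Omega^1_{\hat Z}$ and a refinement of Bogomolov's inequality $2 r\, c_2(E) \ge (r-1) c_1(E)^2$. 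Either way a hard theorem is imported wholesale, and I do not expect the toric/combinatorial description of $P$ to shorten it, although it does allow one to verify the resulting numerical inequality directly on the classification of \ref{classification}.

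Finally, for the Noether inequality I would study the canonical map $\varphi_{|K_{\hat Z}|}$: if its image is a surface, bound $K_{\hat Z}^2$ from below by twice the degree of the canonical image; if the image is a curve, use the induced pencil; in both cases one arrives at $K_{\hat Z}^2 \ge 2 p_g - 4$. Combining this with $p_g = \chi - 1 + q \ge \chi - 1$ and Noether's formula rearranges it into the stated bound $c_1^2 \ge \tfrac{c_2 - 36}{5}$, that is, the surface lies on or above the Noether line. No idea beyond \cite{BHPV04} is needed here; within this paper the role of the theorem is only to delimit the region of the $(\chi, c_1^2)$-plane into which the surfaces of the preceding section are forced to fall.
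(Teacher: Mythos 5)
The paper gives no proof of this statement---it is simply quoted from \cite[VII.\,(1.1)]{BHPV04}---and your plan does essentially the same thing: cite that source and sketch its standard ingredients (Noether's formula via Riemann--Roch for $\mathcal{O}_{\hat Z}$, Bogomolov--Miyaoka--Yau via Yau or Miyaoka, and the canonical-map argument for Noether's inequality), so the two approaches coincide. One small remark: your chain $K_{\hat Z}^2\ge 2p_g-4$, $p_g\ge \chi(\mathcal{O})-1$ correctly yields $c_1^2\ge 2\chi(\mathcal{O})-6$, i.e.\ $5c_1^2\ge c_2-36$, which is the form used in the paper's figure; the ``$2\chi-4$'' in the displayed statement only agrees with this under the paper's later convention $\chi=p_g$ (so $\chi(\mathcal{O})=1+\chi$, $q=0$), so the version you derived is the right one to quote.
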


For surfaces on the Noether line we have the following corollary.

\begin{coro}
$P$ defines a surface with Chern numbers on the Noether line, i.e. with $c_1^2=2\chi-4$, if and only if $F(P)$ is a hollow polygon, i.e. it has no interior lattice points. Moreover, for every point on the Noether line exists a suitable hollow $F(P)$.
\end{coro}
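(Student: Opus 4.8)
The plan is to convert the Noether‑line condition into a lattice‑point identity for the half‑integral polygon $F(P)$ and then read it off from the structure theorem~\ref{hats}. Since $c_1^2(\hat Z)=2\mathrm{Vol}_2(F(P))$ and the geometric genus equals $|F(P)\cap M|$ — the interior lattice points of the lattice polytope $P$ are exactly the lattice points of $F(P)$, by the standard primitive‑normal argument, and $F(P)\subseteq\interior{P}$ — the identity $c_1^2=2\chi-4$ is equivalent to $\mathrm{Vol}_2(F(P))=|F(P)\cap M|-2$. So the corollary is really a Pick‑type statement for $F(P)$ together with a realizability statement.

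First I would fix the lattice polygon $Q:=\conv{\interior{P_0}\cap M}$ occurring in Theorem~\ref{hats}, where $P_0$ is the middle polygon of $P$, so that $F(P)\cong\bar F(P_0)=Q\cup T_1\cup\dots\cup T_k$ with each $T_j$ a hat triangle unimodularly equivalent to $\conv{(0,0),(1,0),(1/2,h_j)}$ glued along an edge of $Q$ carrying exactly two lattice points. A short lemma records that each $T_j$ contains no lattice point besides its two base vertices — at any height $0<t\le h_j$ the horizontal slice of $T_j$ is an open interval of length $<1$ lying strictly between the lines $x=0$ and $x=1$ — and that every vertex of $Q$ stays on $\partial F(P)$ by Theorem~\ref{hats}. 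Together these give $|F(P)\cap M|=|Q\cap M|$ and that the lattice points interior to $F(P)$ are precisely those interior to $Q$; in particular $F(P)$ is hollow iff $Q$ is.

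Second, by additivity of normalized volume $\mathrm{Vol}_2(F(P))=\mathrm{Vol}_2(Q)+\sum_j\mathrm{Vol}_2(T_j)=\mathrm{Vol}_2(Q)+\sum_j h_j$, and Pick's theorem for $Q$ gives $\mathrm{Vol}_2(Q)=2\,i(Q)+b(Q)-2$ with $i(Q),b(Q)$ the numbers of interior and boundary lattice points of $Q$; since $|F(P)\cap M|=i(Q)+b(Q)$ this yields
\[
c_1^2(\hat Z)-(2\chi-4)=2\,\mathrm{Vol}_2(F(P))-2|F(P)\cap M|+4=2\,i(Q)+2\textstyle\sum_j h_j\ \ge\ 0,
\]
which reproves the Noether inequality and shows that equality holds exactly when $i(Q)=0$ and there are no hats, i.e.\ exactly when $F(P)$ is a hollow lattice polygon. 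The step I expect to need the most care is deducing that ``$F(P)$ has no interior lattice points'' already forces the absence of hats, so that bare hollowness and the sharp equality condition agree; this should come from combining the classification of two‑dimensional hollow lattice polygons with the quantitative bound in Theorem~\ref{hats} (a hat of height $h$ makes $P_0$, hence $Q$, contain at least $2(h-1)$ interior lattice points) and treating the low‑dimensional degenerate configurations of $Q$ separately.

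Finally, for the ``moreover'' I would exhibit, for each relevant point $(\chi,2\chi-4)$ of the Noether line, an explicit hollow lattice polygon with $\chi$ lattice points and check, via Corollary~\ref{FineInt_test} (equivalently the criterion that $Q^{(-1)}$ be a lattice polygon), that it occurs as the Fine interior of a lattice $3$‑polytope — the natural supply being the hollow polygons of lattice width $1$ together with $2\Delta_2$, with the number of lattice points tuned to hit the prescribed $\chi$. The volume bookkeeping ($c_1^2=2\mathrm{Vol}_2(Q)$, $\chi=|Q\cap M|$) is immediate here; the real work, and the main obstacle in this direction, is the realizability check, since not every hollow polygon is a Fine interior, so the family must be chosen so that pushing all facets out by lattice distance $1$ keeps every new vertex integral.
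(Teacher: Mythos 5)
Your core argument is correct and is essentially a refined version of the paper's proof, but packaged differently. The paper argues in one line directly on the half-integral polygon $F(P)$: by Pick, $2\mathrm{Vol}_2(F(P))\geq 4|F(P)\cap M|-2|\partial F(P)\cap M|-4\geq 2|F(P)\cap M|-4$, with the first inequality an equality exactly when $F(P)$ is the convex hull of its lattice points and the second exactly when $F(P)$ has no interior lattice points; the ``moreover'' is not argued at all. You instead route the Pick computation through the decomposition of Theorem~\ref{hats}, $F(P)=Q\cup T_1\cup\dots\cup T_k$, and obtain the explicit defect formula $c_1^2-(2\chi-4)=2i(Q)+2\sum_j h_j$, which reproves the Noether inequality and isolates the equality case as ``$i(Q)=0$ and no hats''. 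Your supporting observations (hats contain no lattice points beyond their two base vertices, vertices of $Q$ stay on $\partial F(P)$ by Theorem~\ref{hats}, volume additivity, the degenerate case where $Q$ is a segment with two lattice points) are all fine, and in fact your route makes transparent a point the paper's terse chain glosses over, namely that boundary lattice points of $Q$ remain boundary points of $F(P)$ (Proposition~\ref{edges_of_Fineint}). Your treatment of the ``moreover'' via explicit hollow width-one families checked with Corollary~\ref{FineInt_test} (e.g.\ $[0,k]\times[0,1]$ and the trapezoid $\conv{(0,0),(k,0),(0,1),(k+1,1)}$, whose pushed-out polygons are again lattice polygons) is a genuine addition, since the paper leaves that claim unproved.

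The one step you announce but cannot carry out is the reconciliation ``$F(P)$ hollow $\Rightarrow$ no hats'': this implication is false, and no classification of hollow polygons will rescue it, because the bound of $2(h-1)$ interior lattice points is vacuous for hats of height $h\leq 1$. The paper's own Figure~\ref{2points} exhibits hollow Fine interiors with hats, e.g.\ $\conv{(0,0),(1,0),(1/2,1/2)}$, which has no interior lattice points but $c_1^2=1$ while $2\chi-4=0$, so it lies strictly above the Noether line. The correct reading of the corollary is therefore ``$F(P)$ is a hollow \emph{lattice} polygon'': that is exactly the equality case your formula produces, it is also what the paper's chain of inequalities actually establishes (the first inequality is strict as soon as $F(P)$ has a half-integral vertex), and it matches the paper's later remark that hats placed on hollow lattice polygons give surfaces off the Noether line. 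With that reading your proof is complete as planned; with the literal reading the ``if'' direction fails for both your argument and the paper's, so you should drop the attempted reconciliation rather than try to prove it.
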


\begin{proof}
$c_1^2\geq 2\chi-4$ gives for the Fine interior $2\mathrm{Vol}_2(F(P))\geq 2|F(P)\cap M|-4$. By Pick's formula $2\mathrm{Vol}_2(F(P))\geq 4|F(P)\cap M|-2|\partial F(P) \cap M|-4\geq 2| F(P) \cap M|-4$ with equality if and only if $F(P)$ is a hollow polygon. 
\end{proof}



If the Fine interior is a lattice polygon, we get $c_1^2, c_2 \in 2\Z_{\geq 0}$ and the following additional restrictions from the theory of lattice polygons:

\begin{thm}[\cite{Sco76}]
$|\partial P \cap M|\leq 2|\mathrm{int}(P)\cap M|+6$ or $P\cong 3\Delta_2$
\end{thm}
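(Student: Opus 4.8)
The plan is to reproduce the classical argument of Scott \cite{Sco76}; here are the main steps. Throughout I assume $i := |\interior{P}\cap M|\ge 1$, which is the only case in which the stated bound can hold and the only one relevant here, and I write $b := |\partial P\cap M|$. First I would reformulate the claim in terms of area: Pick's theorem gives $2\area{P}=2i+b-2$, so $b\le 2i+6$ is equivalent to $\area{P}\le 2(i+1)$; since $\area{3\Delta_2}=9/2>4$ and $3\Delta_2$ has $i=1$, it then suffices to prove $\area{P}\le 2(i+1)$ for every lattice polygon $P$ with $i\ge 1$ that is not unimodularly equivalent to $3\Delta_2$.

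Next I would normalise by a lattice width direction: after an affine unimodular transformation $\lw{P}$ is attained by the second coordinate, so $P\subseteq\R\times[0,w]$ with $w=\lw{P}$, and $w\ge 2$ since $i\ge 1$ excludes $w\le 1$. Let $\ell(t)$ denote the length of the slice $P\cap(\R\times\{t\})$; this function is concave, and because $P$ is a lattice polygon all of its vertices have integral second coordinate, so $\ell$ is affine on each interval $[j,j+1]$. Hence
\begin{align*}
	\area{P}=\int_0^w\ell(t)\,dt=\sum_{j=1}^{w-1}\ell(j)+\tfrac12\bigl(\ell(0)+\ell(w)\bigr),
\end{align*}
and since every interior lattice point of $P$ has integral second coordinate in $\{1,\dots,w-1\}$ while an open interval of length $\ell(j)$ contains at least $\ell(j)-1$ integers, also $i\ge\sum_{j=1}^{w-1}(\ell(j)-1)$.

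For $w=2$ this already settles the matter: concavity gives $\ell(1)\ge\tfrac12(\ell(0)+\ell(2))$, so $\area{P}\le 2\ell(1)$; the open slice at height $1$ contains exactly $i$ integers and so has length $\le i+1$, giving $\area{P}\le 2\ell(1)\le 2(i+1)$ with no exceptional polygon. For general $w$ I would run a case analysis. The extreme slices $P\cap(\R\times\{0\})$ and $P\cap(\R\times\{w\})$ are either lattice edges or lattice vertices, so $\ell(0),\ell(w)\in\Z_{\ge0}$, and combining the identity and the inequality above with further concavity estimates (and, for small $w$, with the integrality of the slice endpoints) yields $\area{P}\le 2(i+1)$ in every configuration; larger $w$ leaves more room, since $\lw{P}=w$ forces $P$ to have width $\ge w$ in every direction, so it cannot be a thin spike, while the single borderline configuration, which forces $w=3$, is exactly $P\cong 3\Delta_2$. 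Finally the case $i=1$ is disposed of directly: lattice polygons with exactly one interior lattice point form a finite list up to unimodular equivalence, and inspecting it shows $b=9=2i+7$ only for $3\Delta_2$ and $b\le 8$ for all others.

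The hard part is that the estimate has essentially no slack: $\area{P}=2(i+1)$ is attained by an infinite family --- for example $[0,N]\times[0,2]$ and the triangles $\conv{(0,0),(N,0),(0,2)}$ --- and for $3\Delta_2$ the bound only just fails, so the argument cannot avoid the somewhat intricate bookkeeping of how $\ell(0),\dots,\ell(w)$, the integrality of the slice endpoints, and the per-slice counts of interior lattice points interact, along with the finite check in the one-interior-point case. For the purposes of this paper one may of course simply invoke \cite{Sco76}.
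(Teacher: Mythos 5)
A preliminary remark: the paper offers no proof of this statement at all --- it is quoted directly from Scott \cite{Sco76} --- so the only ``paper approach'' to compare with is the citation itself, which, as you note in your last sentence, would suffice for the purposes of this article.

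Judged as a self-contained proof, your write-up has a genuine gap. The reduction via Pick's theorem to $\area{P}\le 2(i+1)$, the normalisation $P\subseteq\R\times[0,w]$ with $w=\lw{P}\ge 2$, the exact trapezoid identity $\area{P}=\sum_{j=1}^{w-1}\ell(j)+\tfrac12(\ell(0)+\ell(w))$, the count $i\ge\sum_{j=1}^{w-1}(\ell(j)-1)$, and the full treatment of $w=2$ are all correct. But for $w\ge 3$ these ingredients close the argument only when the extreme slices are long enough: combining the identity with the count, it suffices to have $\sum_{j=1}^{w-1}\ell(j)\ge\tfrac12\bigl(\ell(0)+\ell(w)\bigr)+2(w-2)$, and the concavity bound $\ell(j)\ge\frac{(w-j)\ell(0)+j\ell(w)}{w}$ gives exactly $\sum_{j=1}^{w-1}\ell(j)\ge\frac{w-1}{2}\bigl(\ell(0)+\ell(w)\bigr)$, which settles this precisely when $\ell(0)+\ell(w)\ge 4$. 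The remaining configurations $\ell(0)+\ell(w)\in\{0,1,2,3\}$ --- note that $3\Delta_2$ itself has $w=3$, $\ell(0)=3$, $\ell(3)=0$ --- are exactly where the substance of Scott's argument lies: there one must use, among other things, that the horizontal extent of $P$ is also at least $w$ (minimality of the width) and sharpen the per-slice counts, and this bookkeeping is what singles out $3\Delta_2$. Your text asserts that ``further concavity estimates yield the bound in every configuration'' and that ``the single borderline configuration forces $w=3$ and is exactly $3\Delta_2$'', but neither assertion is argued, and the second one is the entire content of the equality case. The fallback for $i=1$ (the finite list of lattice polygons with one interior lattice point) is fine, but it does not repair the missing analysis for $i\ge 2$, $w\ge 3$ with short top and bottom slices. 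So either carry out that case analysis explicitly or, as the paper does, simply invoke \cite{Sco76}.
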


\begin{coro}
\begin{align*}
c_1^2=2\mathrm{Vol}_2(F(P))\geq \frac{8}{3}|F(P)\cap M|-8=\frac{2}{7}(c_2-48)
\end{align*}
\end{coro}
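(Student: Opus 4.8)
Recall that we are in the case where $F(P)$ is a lattice polygon, and that $\dim F(P)=2$ since $\hat Z$ is of general type. The plan is to reduce the first inequality to Scott's theorem by way of Pick's formula, and then to rewrite it in terms of $c_2$ using Noether's formula. First I would put $i:=|\interior{F(P)}\cap M|$ and $b:=|\partial F(P)\cap M|$, so that $|F(P)\cap M|=i+b$.

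Since $\mathrm{Vol}_2$ equals twice the Euclidean area, Pick's formula gives $\mathrm{Vol}_2(F(P))=2i+b-2$, and hence $c_1^2=2\mathrm{Vol}_2(F(P))=4i+2b-4$. A direct computation then yields
\begin{align*}
c_1^2-\left(\tfrac{8}{3}\,|F(P)\cap M|-8\right)=\tfrac{2}{3}\,(2i-b+6),
\end{align*}
so the inequality $c_1^2\ge\tfrac{8}{3}|F(P)\cap M|-8$ is equivalent to the lattice-point bound $b\le 2i+6$, which is precisely the content of Scott's theorem above applied to the lattice polygon $F(P)$.

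For the reformulation in terms of $c_2$ I would use the relation $c_1^2+c_2=12(1+|F(P)\cap M|)$ from the Chern-number corollary above, i.e.\ $|F(P)\cap M|=\tfrac{1}{12}(c_1^2+c_2)-1$; substituting this into $c_1^2\ge\tfrac{8}{3}|F(P)\cap M|-8$, clearing denominators and isolating $c_1^2$, one obtains $7c_1^2\ge 2c_2-96$, that is $c_1^2\ge\tfrac{2}{7}(c_2-48)$. There is no genuinely hard step in this argument: all the content sits in Scott's theorem, the rest being Pick's and Noether's formulas. The one point that requires care, just as in Scott's theorem itself, is the exceptional triangle $3\Delta_2$: if $F(P)\cong 3\Delta_2$ then $i=1$ and $b=9$, so $b>2i+6$, equivalently $c_1^2=18<\tfrac{56}{3}$. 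Hence, strictly speaking, the statement carries the same exception ``or $F(P)\cong 3\Delta_2$'' as Scott's theorem, and this case does genuinely occur -- for instance $F([-1,1]\times 6\Delta_2)\cong 3\Delta_2$ by the correspondence between the Fine interior and the middle polytope from Section~2 -- so the exception cannot simply be dropped.
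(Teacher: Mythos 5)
Your computation follows exactly the paper's route: Pick's formula to write $c_1^2=2\mathrm{Vol}_2(F(P))=4i+2b-4$, Scott's bound $b\le 2i+6$ to obtain $c_1^2\ge\tfrac{8}{3}|F(P)\cap M|-8$, and the relation $c_1^2+c_2=12(1+|F(P)\cap M|)$ to convert this into $7c_1^2\ge 2c_2-96$; both algebraic steps check out and coincide with the printed proof. Your extra remark about the exceptional triangle is correct and is precisely what the paper's proof glosses over: it quotes Scott's inequality together with the alternative $P\cong 3\Delta_2$ and then silently drops that alternative, whereas your example $P=[-1,1]\times 6\Delta_2$ (lattice width $2$, and $F(P)\cong 3\Delta_2$ by the middle-polytope corollary of Section~2) lies in the scope of the corollary and gives $c_1^2=18<\tfrac{56}{3}=\tfrac{8}{3}\cdot 10-8$, so the exception cannot be dropped.

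There is, however, one further gap which you inherit from the paper's formulation of Scott's theorem rather than introduce yourself: the bound $b\le 2i+6$ (or $\cong 3\Delta_2$) requires at least one interior lattice point, and it fails for hollow polygons. Hollow lattice polygons with at least $7$ lattice points do occur as $2$-dimensional Fine interiors of width-$2$ lattice $3$-polytopes; for instance $F([-1,1]\times[-1,4]\times[-1,2])\cong[0,3]\times[0,1]$ has $8$ lattice points and $c_1^2=12$, while $\tfrac{8}{3}\cdot 8-8=\tfrac{40}{3}>12$. These are exactly the Noether-line examples constructed earlier in the same section, so they cannot be excluded from the statement's intended scope. Hence the corrected statement must except not only $F(P)\cong 3\Delta_2$ but also hollow $F(P)$ with $|F(P)\cap M|\ge 7$ (equivalently, one should apply Scott's theorem only when $F(P)$ has an interior lattice point), a point missing from both your argument and the paper's.
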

\begin{proof}
We have $3|\partial P \cap M|\leq 2|P\cap M|+6$ or $P\cong 3\Delta_2$ from Scott's inequality. So we get by Pick's theorem
\begin{align*}
c_1^2=&2\mathrm{Vol}_2(F(P))=  4|F(P)\cap M|-2|\partial F(P) \cap M|-4\geq \frac{8}{3}|F(P)\cap M|-8\\=&\frac{2}{7}(c_2-48).
\end{align*}
\end{proof}

\begin{prop}
$vol(P)\leq l-\frac{5}{2}$ and $c_1^2\leq \frac{1}{2}(c_2-42).$
\end{prop}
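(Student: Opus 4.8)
The plan is to convert the Chern-number inequality into a volume estimate for the Fine interior and then to establish it from the structural description of $F(P)$ obtained above.

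First I would observe that the two stated inequalities are equivalent. From the earlier corollaries $c_1^2 = 2\mathrm{Vol}_2(F(P))$ and $c_2 = 12(1+l) - c_1^2$, where $l = |F(P)\cap M|$; eliminating $c_2$ turns $c_1^2\le\tfrac12(c_2-42)$ into $3c_1^2\le 12l-30$, i.e. $\mathrm{Vol}_2(F(P))\le 2l-5$; equivalently, writing $\vol{P}:=\tfrac12\mathrm{Vol}_2(F(P))$ for the Euclidean area of the Fine interior, this is the first inequality $\vol{P}\le l-\tfrac52$ of the proposition. So it suffices to prove $\mathrm{Vol}_2(F(P))\le 2l-5$.

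By Theorem \ref{hats} one may write $F(P)=R\cup T_1\cup\dots\cup T_m$, where $R=\conv{F(P)\cap M}$ is a lattice polygon with $|R\cap M|=l$ and each $T_j$ is a hat of lattice height $h_j$ over an edge $e_j\preceq R$ with $|e_j\cap M|=2$; the $e_j$ are pairwise distinct, $\interior{T_j}\cap M=\emptyset$, the $T_j$ meet $R$ only along the $e_j$ and have pairwise disjoint interiors, so $\mathrm{Vol}_2(F(P))=\mathrm{Vol}_2(R)+\sum_{j=1}^m h_j$. Writing $i$ and $b$ for the numbers of interior and of boundary lattice points of $R$, Pick's formula gives $\mathrm{Vol}_2(R)=2i+b-2=2l-b-2$. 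If $m=0$ this and $b\ge 3$ already give $\mathrm{Vol}_2(F(P))=\mathrm{Vol}_2(R)\le 2l-5$; the Scott exception $R\cong 3\Delta_2$ is harmless, since then no edge of $R$ has exactly two lattice points, forcing $m=0$.

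The main point is therefore to absorb the hat contributions, i.e. to prove $\sum_j h_j\le b-3$ when $m\ge 1$. Here I would invoke the proposition preceding Theorem \ref{hats}: a hat of lattice height $h_j$ forces $2(h_j-1)$ interior lattice points of $P_0$, hence of $R$, occupying the strip of lattice width $h_j-1$ adjacent to $e_j$. The crux is to use that these strips sit against pairwise distinct edges of $R$ and overlap only in a bounded way near its vertices, so that $i$ and $b$ grow together fast enough to give $2l-b-2+\sum_j h_j\le 2l-5$; the reduction of the first paragraph then yields $c_1^2\le\tfrac12(c_2-42)$. I expect this last step to be the genuine obstacle: the naive single-hat bound $l\ge 2(h_j-1)$ is far too weak when $R$ is a thin polygon or a triangle with $b=3$ carrying a tall hat, so one really has to show that the Fine-interior self-consistency of Corollary \ref{FineInt_test} rules such configurations out. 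I would attack this by refining the box argument in the proof of that proposition so that it controls area rather than cardinality, and by dispatching directly the few tightest shapes of $R$ — narrow strips and minimal triangles with a single hat — where the classification in Theorem \ref{classification} covers all the small values of $l$ for which the inequality is delicate.
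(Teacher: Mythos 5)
Your opening reduction and your $m=0$ case are exactly the paper's argument: the proposition is proved there in one line by Pick's theorem, using $b\geq 3$ boundary lattice points, giving $vol \leq l-\frac{5}{2}$ and then $c_1^2=2\mathrm{Vol}_2(F(P))\leq 4l-10=\frac{c_2+c_1^2}{3}-14$, i.e. $c_1^2\leq\frac{1}{2}(c_2-42)$. Where you go wrong is in the scope you assign to the statement. This proposition sits under the standing hypothesis announced just before Scott's theorem (``If the Fine interior is a lattice polygon, we get \dots the following additional restrictions''), and the figure labels the line $2c_1^2\leq c_2-42$ accordingly. So there is no hat case to treat: $F(P)=\conv{F(P)\cap M}$ is assumed, and the proof is complete after your first two paragraphs.

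The part you flag as ``the genuine obstacle'' is not merely unfinished, it is unprovable, because the inequality is false for general half-integral Fine interiors with hats. Take $R=\conv{(0,0),(1,0),(0,1)}$, so $l=3$, $b=3$, $\mathrm{Vol}_2(R)=1=2l-5$; attaching any hat of height $h\geq 1$ on an edge gives $\mathrm{Vol}_2(F(P))=1+h>2l-5$, and such configurations do occur as Fine interiors (several appear in the paper's figure of Fine interiors with $3$ interior lattice points). The classification table confirms this on the level of Chern numbers: for $\chi=4$ (i.e. $l=3$) the realized values of $c_1^2$ go up to $10$, while your target bound $c_1^2\leq 4l-10$ would force $c_1^2\leq 2$; for $\chi=2$ it would force $c_1^2<0$. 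Indeed the paper explicitly remarks that half-integral Fine interiors realize Chern numbers outside the region permitted by the lattice-polygon restrictions. So the claimed equivalence $\sum_j h_j\leq b-3$ cannot be salvaged by refining the box argument or by appealing to Corollary \ref{FineInt_test}; the correct fix is simply to restrict the proposition to the case where $F(P)$ is a lattice polygon, after which your Pick computation (the paper's proof) suffices.
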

\begin{proof}
We have at least $3$ boundary points, so by Pick's theorem $vol(P)\leq l-\frac{5}{2}$. Now we have
\begin{align*}
c_1^2=2\mathrm{Vol}_2(F(P))\leq 4 |F(P)\cap M|-10=\frac{c_2+c_1^2}{3}-14\leq \frac{1}{2}(c_2-42).
\end{align*}
\end{proof}




Our classification of Fine interiors \ref{classification} can be translated into the following theorem, result of which is visualized in figure \ref{Chern_num_drawing}. Notably, we observe many half-integral Fine interiors that realize Chern numbers not achievable by examining only Fine interiors that are lattice polygons. For instance, there are Chern numbers in areas not permitted by Scott's inequality. These Fine interiors arise in the algorithm from putting hats on hollow lattice polygons. Corresponding surfaces can be thought of as kind of a generalization of the surfaces with Chern numbers on the Noether line and Fine interiors, which are hollow polygons. Surfaces with Chern numbers on the Noether line are often referred to as \textit{Horikawa} surfaces, named after Eiji Horikawa, who studied them, for example, in \cite{Hor76}.

\begin{thm}
A pair $(c_1^2,c_2)$ is a pair of Chern numbers of a non-degenate toric hypersurface corresponding to a lattice polytope of width $2$ with up to $40$ interior lattice points and $2$ dimensional Fine interior if and only if it correspondents to a pair $(\chi,c_1^2)$ of the following table

\begin{center}
\begin{tabular}{c|c|c}
$\chi$ & \text{ number of Fine interiors} & $c_1^2 \text{ is element of }$ \\\hline
$2$&$12$&$[1,4]\cap \Z$\\
$3$&$17$&$[2,6]\cap \Z$\\
$4$&$48$&$[4,10]\cap\Z$\\
$5$&$86$&$[6,12]\cap \Z$\\
$6$&$177$&$[8,17]\cap \Z$\\
$7$&$279$&$[10,19]\cap \Z$\\
$8$&$504$&$[12,24]\cap \Z$\\
$9$&$768$&$[14,26]\cap \Z$\\
$10$&$1222$&$[16,31]\cap \Z$\\
$11$&$1850$&$[18,34]\cap \Z$\\
$12$&$2881$&$[20,39]\cap \Z$\\
$13$&$4160$&$[22,41]\cap \Z$\\
$14$&$6150$&$[24,47]\cap \Z$\\
$15$&$8480$&$[26,49]\cap \Z$\\
$16$&$12066$&$[28,54]\cap \Z$\\
$17$&$16746$&$[30,56]\cap \Z$\\
$18$&$23462$&$[32,62]\cap \Z$\\
$19$&$31601$&$[34,64]\cap \Z$\\
$20$&$42914$&$[36,69]\cap \Z$\\
$21$&$56675$&$[38,72]\cap \Z$\\
$22$&$75457$&$[40,77]\cap \Z$\\
$23$&$98713$&$[42,79]\cap \Z$\\
$24$&$129468$&$[44,85]\cap \Z$\\
$25$&$167366$&$[46,87]\cap \Z$\\
$26$&$216764$&$[48,92]\cap \Z$\\
$27$&$276569$&$[50,95]\cap \Z$\\
$28$&$352907$&$[52,100]\cap \Z$\\
$29$&$446184$&$[54,103]\cap \Z$\\
$30$&$564041$&$[56,108]\cap \Z$\\
$31$&$706531$&$[58,110]\cap \Z$\\
$32$&$884749$&$[60,116]\cap \Z$\\
$33$&$1097809$&$[62,118]\cap \Z$\\
$34$&$1362551$&$[64,124]\cap \Z$\\
$35$&$1681298$&$[66,127]\cap \Z$\\
$36$&$2071958$&$[68,131]\cap \Z$\\
$37$&$2535238$&$[70,134]\cap \Z$\\
$38$&$3099580$&$[72,139]\cap \Z$\\
$39$&$3768629$&$[74,142]\cap \Z$\\
$40$&$4578248$&$[76,147]\cap \Z$\\
\end{tabular}
\end{center}

\end{thm}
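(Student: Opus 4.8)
The plan is to reduce the statement to a finite tabulation over the classification of Theorem \ref{classification}, using the Chern-number dictionary already established. First I would fix what has to be counted. If $Z$ is a non-degenerate toric hypersurface whose Newton polytope $P$ is a lattice $3$-polytope of lattice width $2$ with $\dim F(P)=2$, then by Batyrev's theory $\hat Z$ is a minimal surface of general type (here $\kappa(\hat Z)=\min\{\dim F(P),d-1\}=2$), and by the corollary above $c_1^2(\hat Z)=2\mathrm{Vol}_2(F(P))$ and $c_2(\hat Z)=12(1+|F(P)\cap M|)-c_1^2(\hat Z)$. Hence the pair $(c_1^2,c_2)$ is determined by the pair $(\chi,c_1^2)$, where $\chi:=|F(P)\cap M|$, and both numbers are invariants of the affine unimodular class of $F(P)$. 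Moreover $|F(P)\cap M|=|\interior{P}\cap M|$: the interior lattice points of $P$ all lie in $\{0\}\times M$ and are exactly $\{0\}\times(\interior{P_0}\cap M)$ for the middle polygon $P_0$, while the triangles added in Theorem \ref{hats} contribute no lattice points beyond the two already on the relevant edge of $\conv{\interior{P_0}\cap M}$. So ``$P$ has at most $40$ interior lattice points'' means exactly ``$\chi\le 40$'', and it remains to determine which pairs $(\chi,c_1^2)$ occur.

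Next I would identify the admissible Fine interiors with the classified polygons. On the one hand, by Theorem \ref{Fine_int_half_integral} the Fine interior of such a $P$ is a half-integral polygon with at most $40$ lattice points, and it satisfies the test of Corollary \ref{FineInt_test}; hence it is affine unimodular equivalent to one of the $24\,324\,158$ polygons of Theorem \ref{classification}. On the other hand each such polygon $Q$ is realized: since $Q=\bar{F}(\conv{Q^{(-1)}\cap M/2})$, setting $P_0:=\conv{Q^{(-1)}\cap M/2}$ gives $\bar{F}(P_0)=Q$, so a suitable lattice translate $P$ of $2\cdot\mathrm{Pyr}(P_0)$ inside $[-1,1]\times M_\R$ is a lattice $3$-polytope of width $2$ with middle polygon $P_0$ and, by Corollary \ref{FineInt_by_middle_polytop}, with $F(P)\cong Q$; a generic choice of coefficients then makes the associated toric hypersurface non-degenerate with $\hat Z$ of general type. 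Therefore the set of attainable pairs is exactly $\{\,(|Q\cap M|,\,2\mathrm{Vol}_2(Q))\ :\ Q\ \text{in the classification}\,\}$, and since $2\mathrm{Vol}_2(Q)\in\Z$ by the corollary above, each $c_1^2$-slice is legitimately a set of integers.

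Finally I would perform the bookkeeping on the dataset \cite{Boh24a}: for every $\chi\in\{2,\dots,40\}$ count the polygons $Q$ with $|Q\cap M|=\chi$ (these are the entries of the middle column, which must sum to $24\,324\,158$) and collect the values $2\mathrm{Vol}_2(Q)$ over them (the last column). I expect the genuine obstacle to be the scale of the computation rather than any conceptual point, together with one feature that only becomes apparent a posteriori, namely that for each fixed $\chi$ the attained values of $c_1^2$ form an uninterrupted integer interval. Placing a hat of lattice height $h$ on an edge changes $c_1^2$ by $2h$ while leaving $\chi$ unchanged, so it only moves $c_1^2$ within one parity class; that both parities and all intermediate values are hit reflects the range of admissible underlying lattice polygons --- and, for small $\chi$, of the degenerate ``segment-based'' Fine interiors visible in Figure \ref{2points} --- and has to be extracted from the data rather than predicted in advance.
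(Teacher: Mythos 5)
Your proposal is correct and follows essentially the same route as the paper: the paper also obtains this theorem by translating the classification of Theorem \ref{classification} through the formulas $c_1^2=2\mathrm{Vol}_2(F(P))$ and $c_2=12(1+|F(P)\cap M|)-c_1^2$ and tabulating over the dataset. Your explicit verification that the lattice points of $F(P)$ are exactly the interior lattice points of $P$ (the hats adding none) and that every classified polygon is realized via the pyramid construction with generic coefficients only spells out steps the paper leaves implicit.
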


\begin{landscape}

\begin{figure}
\begin{center}
	\begin{tikzpicture}[x=0.046cm,y=0.092cm]
		
		\draw[color=black] (140,140) node { $c_1^2\leq 3c_2$ (Bogomolov-Miyaoka-Yau inequality) };
		\draw[color=black] (295,35) node { $5c_1^2\geq c_2-36$ (Noether inequality) };
		
		\draw[color=black] (415,127) node { $7c_1^2\geq 2c_2-96$};
		\draw[color=black] (415,122) node { (Scott inequality) };
		
		\draw[color=black] (200,100) node { $2c_1^2\leq c_2-42$ };		
		\draw[color=black] (150,95) node { (if the Fine interior is a lattice polygon) };
		
		\draw[color=black] (425,5) node { $c_2$ };
		\draw[color=black] (10,150) node { $c_1^2$ };


			\draw [line width=1pt,color=black] (-5,0)-- (460,0); 
			\draw [line width=1pt,color=black] (0,-3)-- (0,158); 

			\draw [line width=1pt,color=black] (54,6)-- (358,158); 
			\draw [line width=1pt,color=black] (104,16)-- (454,116); 
			\draw [line width=1pt,color=black] (54,6)-- (104,16); 
			
			
			\draw [line width=1pt,color=black] (0,0)-- (51,153);	
			\draw [line width=1pt,color=black] (36,0)-- (456,84);	


			\foreach \n in {1,...,4}{\draw [fill=black] (36-\n,\n) circle (0.5pt);}
			\foreach \n in {2,...,6}{\draw [fill=black] (48-\n,\n) circle (0.5pt);}
			\foreach \n in {4,...,10}{\draw [fill=black] (60-\n,\n) circle (0.5pt);}
			\foreach \n in {6,...,12}{\draw [fill=black] (72-\n,\n) circle (0.5pt);}
			\foreach \n in {8,...,17}{\draw [fill=black] (84-\n,\n) circle (0.5pt);}
			\foreach \n in {10,...,19}{\draw [fill=black] (96-\n,\n) circle (0.5pt);}
			\foreach \n in {12,...,24}{\draw [fill=black] (108-\n,\n) circle (0.5pt);}
			\foreach \n in {14,...,26}{\draw [fill=black] (120-\n,\n) circle (0.5pt);}
			\foreach \n in {16,...,31}{\draw [fill=black] (132-\n,\n) circle (0.5pt);}
			\foreach \n in {18,...,34}{\draw [fill=black] (144-\n,\n) circle (0.5pt);}
			\foreach \n in {20,...,39}{\draw [fill=black] (156-\n,\n) circle (0.5pt);}
			\foreach \n in {22,...,41}{\draw [fill=black] (168-\n,\n) circle (0.5pt);}
			\foreach \n in {24,...,47}{\draw [fill=black] (180-\n,\n) circle (0.5pt);}
			\foreach \n in {26,...,49}{\draw [fill=black] (192-\n,\n) circle (0.5pt);}
			\foreach \n in {28,...,54}{\draw [fill=black] (204-\n,\n) circle (0.5pt);}
			\foreach \n in {30,...,56}{\draw [fill=black] (216-\n,\n) circle (0.5pt);}
			\foreach \n in {32,...,62}{\draw [fill=black] (228-\n,\n) circle (0.5pt);}
			\foreach \n in {34,...,64}{\draw [fill=black] (240-\n,\n) circle (0.5pt);}
			\foreach \n in {36,...,69}{\draw [fill=black] (252-\n,\n) circle (0.5pt);}
			\foreach \n in {38,...,72}{\draw [fill=black] (264-\n,\n) circle (0.5pt);}
			\foreach \n in {40,...,77}{\draw [fill=black] (276-\n,\n) circle (0.5pt);}
			\foreach \n in {42,...,79}{\draw [fill=black] (288-\n,\n) circle (0.5pt);}
			\foreach \n in {44,...,85}{\draw [fill=black] (300-\n,\n) circle (0.5pt);}
			\foreach \n in {46,...,87}{\draw [fill=black] (312-\n,\n) circle (0.5pt);}
			\foreach \n in {48,...,92}{\draw [fill=black] (324-\n,\n) circle (0.5pt);}
			\foreach \n in {50,...,95}{\draw [fill=black] (336-\n,\n) circle (0.5pt);}
			\foreach \n in {52,...,100}{\draw [fill=black] (348-\n,\n) circle (0.5pt);}
			\foreach \n in {54,...,103}{\draw [fill=black] (360-\n,\n) circle (0.5pt);}
			\foreach \n in {56,...,108}{\draw [fill=black] (372-\n,\n) circle (0.5pt);}
			\foreach \n in {58,...,110}{\draw [fill=black] (384-\n,\n) circle (0.5pt);}
			\foreach \n in {60,...,116}{\draw [fill=black] (396-\n,\n) circle (0.5pt);}
			\foreach \n in {62,...,118}{\draw [fill=black] (408-\n,\n) circle (0.5pt);}
			\foreach \n in {64,...,124}{\draw [fill=black] (420-\n,\n) circle (0.5pt);}
			\foreach \n in {66,...,127}{\draw [fill=black] (432-\n,\n) circle (0.5pt);}
			\foreach \n in {68,...,131}{\draw [fill=black] (444-\n,\n) circle (0.5pt);}
			\foreach \n in {70,...,134}{\draw [fill=black] (456-\n,\n) circle (0.5pt);}
			\foreach \n in {72,...,139}{\draw [fill=black] (468-\n,\n) circle (0.5pt);}
			\foreach \n in {74,...,142}{\draw [fill=black] (480-\n,\n) circle (0.5pt);}
			\foreach \n in {76,...,147}{\draw [fill=black] (492-\n,\n) circle (0.5pt);}
			

	\end{tikzpicture}
	\caption{The Chern numbers of the classified surfaces of general type}\label{Chern_num_drawing}
\end{center}

\end{figure}
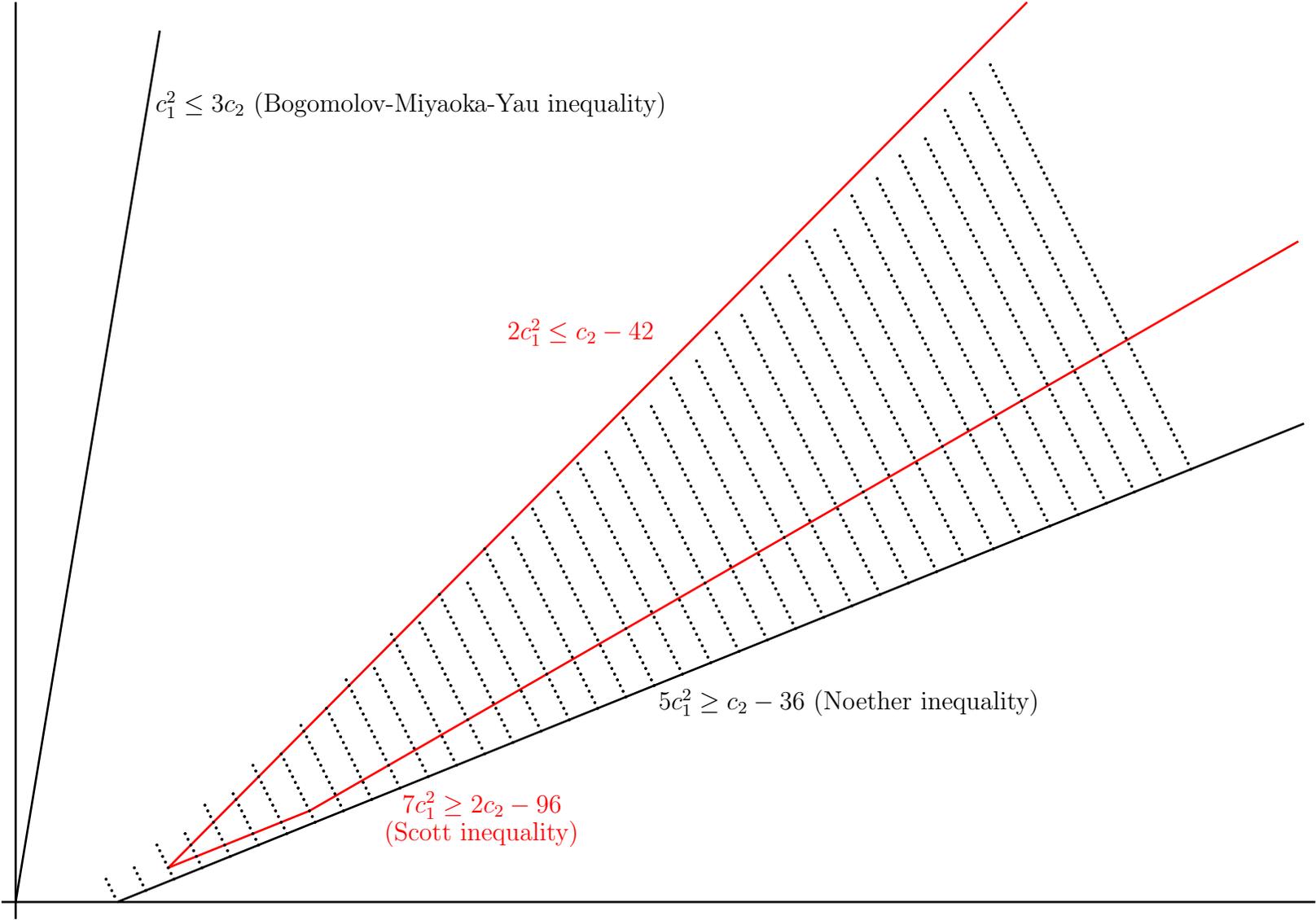

\end{landscape}

\textbf{Acknowledgements.} I would like to thank Victor Batyrev for introducing me to the Fine interior. I would also like to express my gratitude to Alexander M. Kasprzyk for his interest in my work and our discussions on Fine interiors, and Daniel Hättig for inspiring me to study half-integral polygons.

\vspace{15mm}

\end{document}